\documentclass[12pt]{article}
\usepackage{amssymb, amsmath,amsthm,mathtools,mathrsfs,color}
\usepackage[all]{xy}
\usepackage{enumerate}
\usepackage{enumitem}

\usepackage{stmaryrd} % mapsfrom
\usepackage{pgffor}
\usepackage{tikz,tikz-cd}
\usepackage{hyperref}
\hypersetup{
    colorlinks,
    linkcolor=blue, %{red!50!black},
    citecolor={blue},
    urlcolor={blue}
}
\usepackage{geometry}
\setlist{itemsep=1pt,topsep=5pt,parsep=0pt} 
\setlist[enumerate]{label={\rm (\arabic{*})}}

\let\C\relax  % 删除hyperref包中对\C的定义

\definecolor{shadecolor}{RGB}{241, 241, 255}

\usetikzlibrary{calc}
\tikzset{
	dot/.style={draw,circle,inner sep=0.8pt,fill=black},
	cir/.style={draw,circle,inner sep=0.8pt}
}
\DeclareMathAlphabet{\mathcal}{OMS}{cmsy}{m}{n}

\theoremstyle{plain}
\newtheorem{Def}{Definition}[section]
\newtheorem{Prop}[Def]{Proposition}
\newtheorem{Theo}[Def]{Theorem}
\newtheorem{Lem}[Def]{Lemma}
\newtheorem{Coro}[Def]{Corollary}

\DeclareMathOperator{\add}{add}

\DeclareMathOperator{\Ext}{Ext}

\DeclareMathOperator{\End}{End}

\DeclareMathOperator*{\hocolim}{Hocolim}

\DeclareMathOperator*{\colim}{colim}

\DeclareMathOperator{\Hom}{Hom}
\DeclareMathOperator{\Img}{Im}

\DeclareMathOperator{\thick}{thick}
\DeclareMathOperator{\Add}{Add}
\DeclareMathOperator{\cone}{cone}

\DeclareMathOperator{\shift}{shift}

\DeclareMathOperator{\Ker}{Ker}
\DeclareMathOperator{\rad}{rad}

\newcommand{\defCategory}[2]{
	\newcommand{#1}{#2\defvariable}}

\NewDocumentCommand{\defvariable}{O{}O{}}{
	\if\relax\detokenize{#2}\relax 
	\if\relax\detokenize{#1}\relax 
	\else 
	({#1})
	\fi
	\else
	\if\relax\detokenize{#1}\relax
	^{{\rm #2}}
	\else 
	^{{\rm #2}}({#1}) 
	\fi 
	\fi } 

\defCategory{\C}{\mathcal{C}}
\defCategory{\K}{\mathcal{K}}
\defCategory{\D}{\mathcal{D}}
\defCategory{\sA}{\mathcal{A}}
\defCategory{\sB}{\mathcal{B}}
\defCategory{\sC}{\mathscr{C}}
\defCategory{\sD}{\mathscr{D}}

\newcommand{\Kb}[1][]{\K[#1][b]}
\newcommand{\Kf}[1][]{\K[#1][-]}
\newcommand{\Kfb}[1][]{\K[#1][-,b]}

\newcommand{\Db}[1][]{\D[#1][b]}
\newcommand{\Df}[1][]{\D[#1][-]}

\newcommand{\Dw}[1][]{\D[#1]}
\newcommand{\Z}{\mathbb{Z}}
\newcommand{\N}{\mathbb{N}}

\newcommand{\Y}{{\sf Y}}

\def\modcat#1{{\sf mod}\mbox{-}{#1}}

\def\Modcat#1{{\sf Mod}\mbox{-}{#1}}

\def\projcat#1{{\sf proj}\mbox{-}{#1}}

\def\injcat#1{{\sf inj}\mbox{-}{#1}}

\def\precompletion#1#2{{\mathbf{L}_{#2}}\left({#1}\right)}
\def\boundfun#1#2{{\mathbf{C}}_{#2}\left({#1}\right)}
\def\completion#1#2{{\mathbf{S}_{#2}}\left({#1}\right)}
\def\precompletionc#1#2{{\mathbf{L}'_{#2}}\left({#1}\right)}
\def\completionc#1#2{{\mathbf{S}'_{#2}}\left({#1}\right)}
\newcommand{\lra}{\longrightarrow}
\newcommand{\lla}{\longleftarrow}
\newcommand{\ra}{\rightarrow}

\newcommand{\lraf}[1]{\stackrel{#1}{\lra}}
\newcommand{\llaf}[1]{\stackrel{#1}{\lla}}
\newcommand{\raf}[1]{\stackrel{#1}{\ra}}

\newcommand{\opp}{{}^{\rm op}}
\newcommand{\cpx}[1]{{#1^{\bullet}}}

\def\A{{\mathscr{A}}}
\def\B{{\mathscr{B}}}

\def\M{{\mathcal{M}}}
\def\N{{\mathcal{N}}}
\def\U{{\mathscr{U}}}
\def\T{{\mathscr{T}}}

\def\G{{\mathcal{G}}}

\def\Abcat{{\bf{Ab}}}
\def\slicesilting{\Omega^{\rm{slice}}}
\def\stacksilting{\Omega^{\rm{stack}}}

\renewcommand{\leq}{\leqslant}
\renewcommand{\geq}{\geqslant}

\title{Metrics on triangulated categories and restrictions of (co)-$t$-structures}
\author{Wei Hu, Ziheng Liu}
\date{}

\begin{document}

\maketitle

\renewcommand{\thefootnote}{\alph{footnote}}
%\setcounter{footnote}{-1} \footnote{ $^*$ Corresponding author.}
%%Email: xicc@cnu.edu.cn; Fax: 0086 10 68903637.}
%\renewcommand{\thefootnote}{\alph{footnote}}
\setcounter{footnote}{-1} \footnote{2020 Mathematics Subject
Classification: 18G80; 18G35.}
\renewcommand{\thefootnote}{\alph{footnote}}
\setcounter{footnote}{-1} \footnote{Keywords: Triangulated categories; Metric; $t$-structures; co-$t$-structures; Silting subcategories.}

\begin{abstract} 
This paper explores the restriction behavior of silting-induced $t$-structures and co-$t$-structures on triangulated categories endowed with metrics. For compactly generated triangulated categories admitting small coproducts, silting subcategories of compact objects give rise to canonical $t$-structures. We establish that a silting subcategory being contravariantly finite in the precompletion (or completion) is equivalent to the canonical $t$-structure restricting to this precompletion (or completion). This result yields a purely categorical characterization of right coherent rings: a ring $R$ is right coherent if and only if the standard $t$-structure on $\mathcal{D}({\sf Mod}\text{-}R)$ restricts to a $t$-structure on 
$\mathcal{K}^{-,b}({\sf proj}\text{-}R)$. Furthermore, we show that the correspondences between silting objects, bounded (co)-$t$-structures, and simple-minded collections given by Koenig and Yang can be extended to the metric framework of triangulated categories, and still commute with mutation operations and preserve natural partial orders.
\end{abstract}

\section{Introduction}

Triangulated categories lie at the core of modern homological algebra, representation theory of algebras, and algebraic geometry, serving as a unifying framework to study derived categories, homotopy categories, and various cohomological invariants. Among the fundamental tools to dissect the structure of triangulated categories, $t$-structures and co-$t$-structures stand out: $t$-structures, introduced by Beilinson, Bernstein and Deligne \cite{Beilinson1982aa}, allow us to extract abelian hearts from triangulated categories and establish connections between homotopical and abelian categories; co-$t$-structures  provide a dual perspective.

In recent years, a systematic theory of metrics on triangulated categories has been developed (\cite{Neeman2018TheC,Neeman2026aa,Krause2020aa,Krause2024}). Initially, this theory was designed to provide an alternative proof for a key result in Rickard’s Morita theory: a triangle equivalence between bounded derived categories of finitely presented modules over two right coherent rings restricts to a triangle equivalence between homotopy categories of bounded complexes of finitely generated projective modules. Beyond this initial application, the theory quickly emerged as a powerful and flexible language to address a wide range of structural problems on triangulated categories. The theory is compatible with $t$-structures and co-$t$-structures: every $t$-structure or co-$t$-structure on a triangulated category naturally induces a good metric, and conversely, metrics can be used to characterize the existence of these core structures (see \cite{Neeman2024,Biswas2024}).

In a compactly generated triangulated category with small coproducts, a set of compact objects always induces a $t$-structure \cite{Tarrio2003}*{Theorem A.1}, see also \cite{Neeman2021tstr}. Particularly, a silting subcategory of compact objects  generates canonical $t$-structures that generalize the standard $t$-structures on derived categories of rings and dg algebras. For a compactly generated triangulated category $\T$ with small coproducts, let $\T^c$ denote the full subcategory of compact objects; if $\T^c$ admits a silting subcategory $\G$ of compact objects, $\G$ canonically induces a $t$-structure $(\mathscr{T}_{\mathcal{G}}^{\leq 0},\mathscr{T}_{\mathcal{G}}^{\geq 0})$ on $\T$ via Hom-orthogonality. This silting-induced $t$-structure further defines a good metric on $\T^c$, whose precompletion $\T_c^-$ and completion $\T_c^b$ recover classical categories in ring theory: for a ring $R$, $\mathscr{T}_c^-=\Kb[\projcat{R}]$ and $\T_c^b=\Kfb[\projcat{R}]$. When $R$ is right coherent, $\Kfb[\projcat{R}]$ is triangle equivalent to the bounded derived category $\Db[\modcat{R}]$ of finitely presented modules, and the standard $t$-structure on $\D[\Modcat{R}]$ is known to restrict to this subcategory.
This classical observation raises a natural question in the general setting: under what conditions does the silting-induced $t$-structure  restrict to a $t$-structure on the precompletion  or completion? Existing literature has focused on lifting $t$-structures to larger triangulated categories, but the restriction problem to metric completions of compact subcategories remains largely uncharacterized, especially for general silting subcategories beyond the standard projective generator case. Moreover, the interplay between restricted $t$-structures, co-$t$-structures, and categorical finiteness conditions (such as contravariant finiteness) has not been systematically explored.

\medskip 
In this paper, we establish a complete characterization of the restriction behavior of silting-induced $t$-structures, and further extend the classical Koenig-Yang correspondences to the metric framework of triangulated categories. Our main results can be  summarized as follows.

\begin{Theo}\label{theo-main-intro}
Let $\T$ be a compactly generated triangulated category with small coproducts, and let $\G$ be a silting subcategory of a $\T^c$. Then, for the following conditions
\begin{enumerate}[label={\rm (\roman*)}]
\item $\G$ is contravariantly finite in $\T^c$;
\item $\G$ is contravariantly finite in $\T_c^-$;
\item The $t$-structure $(\T_{\G}^{\leq 0}, \T_{\G}^{\geq 0})$ induced by $\G$ restricts to $\T_c^-$;
\item The $t$-structure $(\T_{\G}^{\leq 0}, \T_{\G}^{\geq 0})$ induced by $\G$ restricts to $\T_c^b$;
\item Every object in $\T_c^b$ admits a right $\add(\G)$-approximation;
\end{enumerate}
one has ${\rm (i)}\Leftrightarrow {\rm (ii)}\Leftrightarrow {\rm (iii)}\Rightarrow {\rm (iv)}\Rightarrow {\rm (v)}$. If $\T^c\subseteq\T_c^b$, then {\rm (v)} also implies {\rm (i)}.
Moreover, if the equivalent conditions {\rm (i)-(iii)} are satisfied, then the following hold. 
\begin{enumerate}
	\item The $t$-structure $(\T_{\G}^{\leq 0}, \T_{\G}^{\geq 0})$ restricts to an above-bounded $t$-structure on  $\T_c^-$ and to a bounded $t$-structure on $\T_c^b$. The two restricted $t$-structures have the same heart $\mathcal{H}_{\G}$, which is equivalent to $\modcat{\G}$ and $\modcat{H^0(\G)}$ in the following way. 
	 \[\arraycolsep=3pt
    \begin{array}{rcccl}
    \modcat{\G} &\llaf{\simeq} &\mathcal{H}_{\G}&\lraf{\simeq}& \modcat{H^0(\G)}\\
    \Hom(-,X)|_{\G} &\mapsfrom & X &\mapsto & \Hom(-,X)|_{H^0(\G)},
    \end{array}\] 
	\item The intersection $\T_c^-\cap \T_{\G}^{\leq 0}$ determine a co-$t$-structure  $(\langle\G\rangle^{[0,+\infty)},\T_c^{-}\cap\T_{\G}^{\leq 0})$  on $\T_c^-$ with co-heart $\G$.
	If $\T^c\subseteq \T_c^b$, then the co-$t$-structure on $\T_c^-$ can be restricted further to $\T_c^b$ with co-heart $\G$. 
\end{enumerate}
\end{Theo}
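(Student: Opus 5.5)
The strategy is to work throughout with the explicit description of the precompletion and completion attached to the metric of the silting $t$-structure. An object $X\in\T$ lies in $\T_c^-$ exactly when, for every $n$, there is a triangle $E_n\to X\to D_n\to$ with $E_n\in\T^c$ and $D_n\in\T_{\G}^{\leq -n}$. Moreover $\T_c^b=\T_c^-\cap\T^b$, where $\T^b$ is the union of the $\T_{\G}^{\geq -m}$. Since $\G$ is silting and compact, $\T_{\G}^{\leq 0}$ is the smallest cocomplete pre-aisle containing $\G$, and $\T^c\cap\T_{\G}^{\leq 0}=\langle\G\rangle^{(-\infty,0]}$ up to summands. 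The central tool is the truncation $\tau^{\geq 0}$ and the cohomology functor $H^0_{\G}=\Hom(-,?)|_{\G}$. For a compact $E$ with $E\in\T_{\G}^{\leq 0}$, the functor $H^0_{\G}(E)$ is finitely presented as a $\G$-module precisely when $\G$ admits suitable right approximations. The reason is that $E$ is built in finitely many steps from $\G[i]$ with $i\geq 0$, so $H^0_\G(E)$ is a cokernel of a map between representables modulo the contribution from $\G[1]$.

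The implications are proved in the following order.

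\begin{itemize}
\item[(i)$\Rightarrow$(iii).] Given $X\in\T_c^-$, we show $\tau^{\leq 0}X\in\T_c^-$. Choose $E_n\to X\to D_n$ with $n\geq 2$. Because $D_n\in\T_{\G}^{\leq -n}$, the objects $\tau^{\geq 0}X$ and $\tau^{\geq 0}E_n$ agree in degrees $\geq -n+1$. It therefore suffices to show that for compact $E$, the truncation $\tau^{\geq 0}E$ lies in $\T_c^-$. We build a compact approximation of it degree by degree. Contravariant finiteness gives right $\add\G$-approximations of compact objects, so we can take a "projective resolution" of $H^0_\G(E)$ by representables $\G_0\leftarrow\G_1\leftarrow\cdots$. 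Iterated cones of these yield compact objects $P_k$ with maps $P_k\to\tau^{\leq 0}E$ whose cones lie in $\T_{\G}^{\leq -k}$. This is the same mechanism by which coherence of $R$ makes $\tau^{\geq0}$ of a perfect complex lie in $\Kfb[\projcat R]$.
\item[(iii)$\Rightarrow$(i).] For compact $E$, restriction puts $\tau^{\geq 0}E$ in $\T_c^-$. We approximate it by a compact $F\to\tau^{\geq0}E$ with cone in $\T_{\G}^{\leq -2}$, so that $H^0_\G(E)\cong H^0_\G(\tau^{\geq 0}E)\cong H^0_\G(F)$ modulo the image from degree $1$. Because $F$ is compact, it receives a map from an object of $\add\G$ that is surjective on $H^0_\G$. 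Precomposing and lifting through $E\to\tau^{\geq0}E$ yields a right approximation of $E$. Every object of $\T^c$ reduces to this case by shifting and truncating, since $\Hom(\G,E)$ only sees $\tau^{\geq0}$.
\item[(i)$\Leftrightarrow$(ii).] Every $X\in\T_c^-$ agrees with a compact $E_2$ as far as $\Hom(\G,-)$ is concerned, via the triangle with $D_2\in\T_\G^{\leq-2}$.
\item[(iii)$\Rightarrow$(iv).] Truncations preserve $\T^b$.
\item[(iv)$\Rightarrow$(v).] Apply the argument of (iii)$\Rightarrow$(i) to objects of $\T_c^b$.
\item[(v)$\Rightarrow$(i).] Under $\T^c\subseteq\T_c^b$ this is immediate.
\end{itemize}

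For consequence (1), the restricted hearts are both $\T_c^b\cap\mathcal H$, and $X\mapsto\Hom(-,X)|_\G$ is fully faithful on hearts by the standard silting argument. Its image is $\modcat\G$ because objects of the heart in $\T_c^-$ are $\tau^{\geq0}$ of compacts, which have finitely presented $H^0$ by the construction above. Conversely, a finitely presented module $\mathrm{coker}(\G_1\to\G_0)$ is realized as $\tau^{\geq 0}\cone(\G_1\to\G_0)$. The identification $\modcat\G\simeq\modcat{H^0(\G)}$ is formal, since $\Hom(-,X)|_\G$ kills the radical maps factoring through negative morphisms. Boundedness follows from $\T_c^b\subseteq\T^b\cap\T_c^-$ together with the approximations.

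For consequence (2), define $\langle\G\rangle^{[0,\infty)}$ as the objects of $\T_c^-$ built from $\G[i]$ with $i\leq 0$. Orthogonality with $\T_c^-\cap\T_\G^{\leq 0}[1]$ comes from the silting vanishing $\Hom(\G,\G[>0])=0$ combined with the $t$-structure orthogonality. The decomposition triangles are produced by iterating right $\add\G$-approximations, which exist by (ii): take $G_0\to X$, pass to the cone, and continue. The co-heart computation reduces to $\G=\langle\G\rangle^{[0,\infty)}\cap\T_\G^{\leq0}$.

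The main obstacle is the convergence step. We must show that the inductively built compact objects $P_k$ (or the co-$t$ decompositions) converge in the metric to an object of $\T_c^-$, and that the cones really move into $\T_\G^{\leq -k}$. This requires controlling $\Hom(\G,-)$ of cones of approximations, where the key input is exactness of $H^0_\G$ on the heart. The remaining subtle point is the extra hypothesis $\T^c\subseteq\T_c^b$, needed to keep these constructions inside $\T_c^b$.
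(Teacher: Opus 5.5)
Your arguments for (i)$\Leftrightarrow$(ii), (iii)$\Rightarrow$(iv) and (v)$\Rightarrow$(i) match the paper. The two substantive implications, however, have genuine gaps.

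First, (iii)$\Rightarrow$(i) is circular. After choosing a compact $F\to\tau^{\geq 0}E$ with cone in $\T_{\G}^{\leq -2}$, the only property of $F$ you use is compactness. But ``a compact object receives a map from $\add(\G)$ that is surjective on $\Hom(\G,-)$'' is exactly statement (i). If $E\in\T_{\G}^{\geq 0}$ you may even take $F=E$, so hypothesis (iii) does no work. The claim is false for arbitrary compacts: take $\G=\add(R)$ with $R$ not coherent, and a two-term complex of finitely generated projectives in degrees $0,1$ with non-finitely-generated kernel. The same defect propagates to (iv)$\Rightarrow$(v).

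The missing idea is to use the aisle rather than the co-aisle. For $X$ in the subcategory $\U$ to which the $t$-structure restricts, the map $\Hom(\G,\tau^{\leq 0}X)\to\Hom(\G,X)$ is surjective, since $\Hom(\G,\tau^{\geq 1}X)=0$. Moreover $\tau^{\leq 0}X\in\T_c^-\cap\T_{\G}^{\leq 0}$. Choose a compact $F\to\tau^{\leq 0}X$ with cone in $\T_{\G}^{\leq -1}$. Then $F$ is forced into $\T^c\cap\T_{\G}^{\leq 0}=\langle\G\rangle^{(-\infty,0]}$, so it is a summand of an object of $\G*\G[1]*\cdots*\G[N]$. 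The $\G$-term of that extension is a right $\G$-approximation, because $\Hom(\G,\G[1]*\cdots*\G[N])=0$. This is Lemma~\ref{Lem-contr-in-negpart} together with Proposition~\ref{prop-res2contr}. You state $\T^c\cap\T_{\G}^{\leq 0}=\langle\G\rangle^{(-\infty,0]}$ at the outset but never use it, and this is exactly where it is needed.

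Second, in (i)$\Rightarrow$(iii) your reduction $\tau^{\geq 0}X\cong\tau^{\geq 0}E_n$ for $n\geq 2$ is correct and useful. The construction that follows, however, fails for a general silting $\G$, for two reasons.

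1. $\tau^{\geq 0}E$ has cohomology in degrees $0,\dots,r$, not only $H^0$.
2. $\G$ may have nonzero negative self-extensions $\Hom(\G,\G[m])$ with $m<0$, for instance $\G=\add(A)$ for a non-positive dg algebra $A$. Then $\Hom(\G[1],\cone(G_1\to G_0))$ contains the cokernel of $\Hom(\G,G_1[-1])\to\Hom(\G,G_0[-1])$. The next term of a projective resolution in $\modcat{\G}$ does not see this cokernel. So iterated cones of such a resolution do not have cones in $\T_{\G}^{\leq -k}$, and their existence is not automatic either. The coherent-ring analogy hides this, because $\Hom(R,R[m])=0$ for $m\neq 0$.

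The resolution has to be built inside $\T$ by iterated approximations. Once you do that, the real obstacle is not convergence: maps $P_k\to\tau^{\geq 0}E$ with cones in $\T_{\G}^{\leq -k}$ give membership in $\T_c^-$ directly by definition. The obstacle is the existence of approximations of intermediate objects, which are neither compact nor known to lie in $\T_c^-$.

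The paper sidesteps this. It works with $X\in\T_c^-$, where (ii) supplies all approximations, and kills $\Hom(\G[0],-),\Hom(\G[1],-),\dots$ in turn by triangles $R_i[i-1]\to X_i\to X_{i+1}$. It then identifies $\hocolim X_i$ with $\tau^{\geq 1}X$ and places it in $\T_c^-$ by Lemma~\ref{lem-big-Cauchyseq}.

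Your reduction makes this even simpler. Run the same tower from a compact $E$ with a triangle $E\to X\to D\to E[1]$ and $D\in\T_{\G}^{\leq -1}$; your argument gives $\tau^{\geq 1}E\cong\tau^{\geq 1}X$. By (i) every $X_i$ stays compact, and the cones $R_i[i]$ lie in $\langle\G\rangle^{(-\infty,-i]}$. So the tower is a Cauchy sequence in $\T^c$, and its homotopy colimit lies in $\T_c^-$ by Proposition~\ref{prop-silt-completion}.

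Two minor points. For the co-$t$-decomposition, the approximations must run from the top degree down: $\G[-n]$ first, ending with $\G[0]$. Otherwise later cones reintroduce maps from earlier shifts. Also, $H^0\colon\G\to H^0(\G)$ is an equivalence, so nothing is ``killed'' in the identification $\modcat{\G}\simeq\modcat{H^0(\G)}$.
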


An immediate consequence   is a purely categorical characterization of right coherent rings. 

\begin{Coro}
Let $R$ be a ring. Then the following are equivalent.
\begin{enumerate}
    \item $R$ is right coherent.
    \item The standard $t$-structure of $\D[\Modcat{R}]$ restricts to $\Kf[\projcat{R}]$. 
    \item The standard $t$-structure of $\D[\Modcat{R}]$ restricts to $\Kfb[\projcat{R}]$. 
\end{enumerate}
\end{Coro}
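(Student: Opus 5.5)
We derive the corollary from Theorem~\ref{theo-main-intro} applied to $\T=\D[\Modcat{R}]$, which is compactly generated with small coproducts, and to $\G=\add(R)=\projcat{R}$. This $\G$ is a silting subcategory of $\T^c=\Kb[\projcat{R}]$. Its induced $t$-structure $(\T_{\G}^{\leq 0},\T_{\G}^{\geq 0})$ is the standard $t$-structure, because $\Hom(R,X[i])=H^i(X)$. The precompletion and completion of $\T^c$ for the associated metric are $\T_c^-=\Kf[\projcat{R}]$ and $\T_c^b=\Kfb[\projcat{R}]$, as recalled in the introduction. Moreover $\T^c=\Kb[\projcat{R}]\subseteq\Kfb[\projcat{R}]=\T_c^b$, so the extra hypothesis of the theorem holds. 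Under this dictionary, statement (2) is condition (iii) and statement (3) is condition (iv). The theorem then gives (iii)$\Leftrightarrow$(i)$\Leftrightarrow$(iv)$\Leftrightarrow$(v). It therefore suffices to prove that $R$ is right coherent if and only if $\projcat{R}$ is contravariantly finite in $\Kb[\projcat{R}]$.

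For the direction from contravariant finiteness to coherence, let $f\colon P_1\to P_0$ be a map in $\projcat{R}$, and consider the two-term complex $X=(P_1\to P_0)$ with $P_0$ in degree $0$. Morphisms $Q\to X[-1]$ with $Q\in\projcat{R}$ are the same as maps $Q\to P_1$ landing in $\Ker f$, that is, $\Hom(Q,X[-1])\cong\Hom_R(Q,\Ker f)$. Take a right $\add(R)$-approximation $Q\to X[-1]$. Applying it to $Q=R$ shows that the image of $Q$ contains every element of $\Ker f$, so $Q\to\Ker f$ is surjective and $\Ker f$ is finitely generated. Since every finitely generated right ideal $I$ of $R$ is the kernel of a map $R\to R^n$, namely the presentation of the finitely presented module $R/I$, and the same holds for finitely generated submodules of free modules of finite rank, this gives right coherence.

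For the converse, assume $R$ is right coherent and let $X\in\Kb[\projcat{R}]$. Then $\Hom(R,X)=H^0(X)$ as a right $R$-module. By coherence the modules $Z^0(X)=\Ker(X^0\to X^1)$ are finitely presented, hence $H^0(X)$ is finitely presented. In particular it is finitely generated, say by the images of $x_1,\dots,x_n$. These are cocycles in $X^0$, and they give a chain map $R^n\to X$ inducing a surjection $\Hom(Q,R^n)\to\Hom(Q,X)=\Hom_R(Q,H^0(X))$ for every $Q\in\projcat{R}$, using projectivity of $Q$. This is the required right approximation.

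The step I expect to be most delicate is the bookkeeping in identifying condition (iii) with restriction of the standard $t$-structure, including checking the $t$-structure identification and the metric description of $\T_c^-$ and $\T_c^b$. Both of these are taken from earlier in the paper, so the corollary reduces to the elementary argument on kernels given above.
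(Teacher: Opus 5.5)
Your proof is correct and follows the paper's route. You specialize Theorem~\ref{theo-main-intro} to $\T=\D[\Modcat{R}]$ and $\G=\add(R)$, and reduce everything to the elementary fact that contravariant finiteness of $\add(R)$ amounts to finite generation of kernels, equivalently of $H^0$. The only difference is that you verify condition (i) in $\Kb[\projcat{R}]$ using two-term complexes and close the cycle via $\T^c\subseteq\T_c^b$, whereas the paper asserts the characterization directly in $\Kf[\projcat{R}]$ and $\Kfb[\projcat{R}]$.

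One side remark is false as stated. A finitely generated right ideal $I$ need not be the kernel of a map $R\to R^n$; for example, $2\mathbb{Z}\subset\mathbb{Z}$ is not the kernel of any map $\mathbb{Z}\to\mathbb{Z}^n$. The correct reason is that $I$ is the image of $(a_1,\dots,a_m)\colon R^m\to R$ for generators $a_k$. Finite generation of the kernel of this map is exactly finite presentation of $I$, which is the coherence condition you need.
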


This corollary generalizes a recent result appearing in \cite{Marks2023}*{Corollary 4.3} and  \cite{Saorin2022}*{Proposition 6.6} where they consider the restriction of the $t$-structure induced by a tilting complex $\cpx{T}$ over a right coherent ring $R$ to the bounded derived category $\Db[\modcat{R}]$, and say that the $t$-structure restricts to $\Db[\modcat{R}]$ if and only if the endomorphism algebra of $\cpx{T}$ is right coherent. It is known that 
 right coherent rings are not closed under derived equivalences. This means that the restriction of equivalent $t$-structures to a smaller subcategory may differ.

\medskip 
Based on the above results, we can extend a classic result of Koenig and Yang \cite{Koenig2014aa} to the metric framework. 

\begin{Theo}\label{theo-correspondence}
Let $k$ be a field, and let $\T$ be a compactly generated triangulated category with small coproducts such that $\T^c$ admits a silting object $G$ with $\Hom(G,G[n])$ being finite dimensional for all $n$. Suppose that there is an object $G'\in \T_c^b$ and an integer $N$ such that $\T=\overline{\langle G'\rangle}_N$. Then there are bijections which commute
with mutations and which preserve partial orders between
\begin{enumerate}
	\item ismorphism class of basic silting objects in $\T^c$;
	\item bounded co-$t$-structures on $\T^c$;
	\item bounded $t$-structures on $\T_c^b$ with length heart;
	\item equivalence classes of simple-minded collections in $\T_c^b$.  
\end{enumerate}
\end{Theo}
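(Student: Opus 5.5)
The plan is to reduce to the Koenig--Yang correspondence \cite{Koenig2014aa} and transport it along the metric completion, using Theorem~\ref{theo-main-intro} as the bridge. The first step is to check its hypotheses for every basic silting object $M$ of $\T^c$, not just for $G$. Since $G$ is silting, $\T^c=\thick(G)$. Hence $\Hom(M,M'[n])$ is finite dimensional for all silting $M,M'$ and all $n$. Then $\add(M)$ has finitely many indecomposables up to isomorphism, so it is trivially contravariantly finite in $\T^c$. By (i)$\Rightarrow$(iii), the $t$-structure $(\T_M^{\leq 0},\T_M^{\geq 0})$ restricts to a bounded $t$-structure on $\T_c^b$, with heart $\mathcal{H}_M\simeq\modcat{\End(M)}$. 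The algebra $\End(M)$ is finite dimensional, and I would show the heart consists of finite-length objects: every $X\in\mathcal{H}_M$ has $\Hom(M,X)$ finite dimensional. This is where the hypothesis $\T=\overline{\langle G'\rangle}_N$ with $G'\in\T_c^b$ enters. It should imply that $\Hom(G,X[n])$ is finite dimensional for all $X\in\T_c^b$, via the finite-dimensionality of $\Hom(G,G'[n])$ and closure under $N$-fold extensions, coproducts and summands. Compactness of $G$ kills the coproduct issue, since only finitely many summands contribute. So the heart is a length category, and its simples $S_1,\dots,S_r$, dual to the indecomposable summands of $M$, form a simple-minded collection in $\T_c^b$; they generate because the $t$-structure is bounded. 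By Theorem~\ref{theo-main-intro}(2), $M$ also gives the bounded co-$t$-structure on $\T^c$ with co-heart $\add M$, as in the classical case.

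The second step is to build the inverse maps. From a bounded co-$t$-structure on $\T^c$ we take the co-heart. It is $\add M$ for a silting $M$; this is the standard Bondarko/Mendoza--Sáenz--Santiago--Souto argument inside $\T^c$ and is unchanged. The harder direction starts from a bounded $t$-structure $(\mathcal{U},\mathcal{V})$ on $\T_c^b$ with length heart, or from a simple-minded collection $\{S_i\}$ in $\T_c^b$; these two are equivalent by the usual Al-Nofayee/Koenig--Yang argument carried out inside $\T_c^b$. I would recover a silting object by requiring $M_i\in\T^c$ with $\Hom(M_i,S_j[n])=\delta_{ij}\delta_{n0}k$. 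Two routes are available. One is Koenig--Yang's construction of a dg endomorphism algebra $A$ of $\bigoplus S_i$ and the Koszul-dual silting. The other, which I prefer here, compares with the reference pair $(G,\{S^G_i\})$. Since $\{S_i\}$ lies in $\T_c^b\subseteq\T$, we can consider the $t$-structure on $\T$ generated by $\{S_i\}$. Using $\T=\overline{\langle G'\rangle}_N$ and Neeman's approximability framework, I would then show that the objects $M_i$ representing $\Hom(-,S_i)$ on the heart are compact. The idea is that $\Hom(-,X)$ for $X$ in the heart detects compactness through the metric, because compact objects are exactly those with $\Hom(C,-)$ vanishing on $\T^{\leq -n}$ for large $n$ after finite truncation.

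The final step is compatibility. Mutation of silting objects corresponds to mutation of simple-minded collections and to HRS-tilting at a simple; Koenig--Yang prove this by comparing Hom-spaces. Those computations only use objects of $\T^c$ and $\T_c^b$ together with the pairing $\Hom(\T^c,\T_c^b)$, so they transfer verbatim. The partial orders, $M\geq M'$ iff $\Hom(M,M'[>0])=0$ and containment of aisles, also match: by the first step, $\T_c^b\cap\T_M^{\leq 0}$ is determined by $M$ via Hom-vanishing.

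I expect the main obstacle to be the surjectivity onto (3)/(4). Every length-heart bounded $t$-structure on $\T_c^b$ must come from a compact silting object, and this is exactly where the finite generation hypothesis $\T=\overline{\langle G'\rangle}_N$ must be used, presumably through a Neeman-type representability theorem. That theorem would say that finite-dimensional cohomological functors on $\T_c^b$ are representable by objects of $\T^c$. I would attempt to prove this representability statement first and derive the rest formally.
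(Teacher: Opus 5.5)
\textbf{The gap: the map from simple-minded collections back to silting objects.} Your skeleton is sound up to the point where the theorem becomes hard. Restricting the silting $t$-structures to $\T_c^b$ via Theorem~\ref{theo-main-intro} gives the maps from (1) and (2) to (3) and (4). You also correctly guess that $\T=\overline{\langle G'\rangle}_N$ is needed only for a Neeman-type representability theorem; the paper simply quotes \cite[Theorem 1.4]{Neeman2026aa}. But the inverse map from simple-minded collections to silting objects carries the whole theorem, and you do not supply it. It also does not follow ``formally'' from representability, for two reasons.

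First, you never say which functor to represent. The condition $\Hom(M_i,S_j[n])\cong\delta_{ij}\delta_{n0}\End(S_i)$ prescribes values on the collection only; it is not a homological functor on $\T_c^b$. The paper produces one by Rickard's construction. Starting from $X_i$, one repeatedly cones off coproducts of the $X_j[t]$ with $t<0$ and sets $I_i=\hocolim X_i^{(n)}\in\T$, an object outside $\T_c^b$. Lemma~\ref{lem-tcf-coprod} gives $\Hom(Y,I_i)\cong\colim\Hom(Y,X_i^{(n)})$ for $Y\in\T_c^-$. From this, $\Hom(X_j,I_i[m])\cong R_i$ for $i=j$, $m=0$, and is $0$ otherwise. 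Then $D\Hom(-,I_i)|_{\T_c^b}$ meets the finiteness condition because $\thick(\{X_j\})=\T_c^b$, so it is $\Hom(T_i,-)$ for some $T_i\in\T^c$.

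Second, nothing formal makes $T=\bigoplus T_i$ silting.
\begin{itemize}
\item Presilting requires extending the duality $D\Hom(T_i,-)\cong\Hom(-,I_i)$ from $\T_c^b$ to $\T_c^-$, using the restricted truncations, so that it applies to $T_j[m]\in\T^c$. You also need $\Hom(T_j,I_i[m])=0$ for $m<0$.
\item $\thick(T)=\T^c$ is genuinely non-trivial. The duality only lets $\Hom(T,-[n])$ detect nonzero objects of $\T_c^-$ (via $Y\to Y^{\geq t}\in\T_c^b$ and the $I_i$), not of all of $\T$. So the usual compact-generation criterion is unavailable. The paper proves a separate weak-generation result for exactly this, Theorem~\ref{theo-testing}, by a substantial homotopy-colimit construction.
\end{itemize}
Without these steps, surjectivity onto (3) and (4) is not established.

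Your alternatives do not close the gap. The dg Koszul-duality route needs an enhancement the hypotheses do not provide. Your proposed criterion for compactness, vanishing of $\Hom(C,\T_G^{\leq -n})$ for large $n$, does not characterize compact objects, since $\coprod_{\mathbb{N}}G$ satisfies it.

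\textbf{Smaller issues.}
\begin{itemize}
\item You do not address injectivity of the map from silting objects to $t$-structures. The paper gets it by noting that the heart determines the bounded $t$-structure. Equal hearts then force $\T_c^b\cap\T_M^{\leq 0}=\T_c^b\cap\T_N^{\leq 0}$, hence $M\geq N\geq M$.
\item Your argument that $\Hom(G,X[n])$ is finite dimensional for $X\in\T_c^b$ is wrong, and it misplaces the role of $\T=\overline{\langle G'\rangle}_N$. Closure under coproducts would ``prove'' this for every $X\in\overline{\langle G'\rangle}_N=\T$. That fails for suitable $X=\coprod_{\mathbb{N}}G'[n]$, because compactness gives $\Hom(G,\coprod Y_i)=\coprod\Hom(G,Y_i)$, which is infinite dimensional.
\item The finiteness statement is true for $X\in\T_c^-$, but for a different reason. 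A triangle $C'\to X\to Y$ with $C'\in\T^c$ and $Y$ sufficiently negative gives $\Hom(C,X)\cong\Hom(C,C')$ (Lemma~\ref{lem-noeth-fin}). It is not even needed for the length property, since Theorem~\ref{theo-main-intro} already identifies the heart with $\modcat{\End(M)}$.
\item Contravariant finiteness of $\add(M)$ in $\T^c$ comes from Hom-finiteness of $\T^c$, not from $\add(M)$ having finitely many indecomposables.
\end{itemize}
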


This paper is organized as follows. Section 2 collects preliminary definitions and results on triangulated categories, good metrics, completions, silting subcategories, $\mathsf{t}$-structures and co-$\mathsf{t}$-structures, fixing notations and recalling key lemmas from the literature. Section 3 is devoted to the proof of Theorem \ref{theo-main-intro}, analyzing the restriction of $\mathsf{t}$-structures and co-$\mathsf{t}$-structures induced by silting subcategories via approximation theory and metric completions. Section 4 develope a generating theorem and Section 5 establishes the extended Koenig-Yang correspondences (Theorem \ref{theo-correspondence}), verifying that the bijections commute with mutations and preserve partial orders.

\section{Preliminaries}

\subsection{General convention}
For an additive category $\mathscr{C}$ and a full subcategory $\G$ of $\mathscr{C}$, we denote by $\add(\G)$ the additive closure of $\G$ in $\mathscr{C}$. That is, $\add(\G)$ consists of objects isomorphic to direct summands of finite direct sums of ojects in $\G$. If $\mathscr{C}$ admits small coproducts, we write $\Add(\G)$ for the full subcategory of $\mathscr{C}$ consists of objects isomorphic to direct summands of coproducts of objects in $\G$. The coproduct of a family of objects $X_i$ in $\mathscr{C}$ is denoted by $\coprod X_i$. An object $G$ in $\mathscr{C}$ is called {\em compact} if for any family of objects $X_i, i\in I$ in $\mathscr{C}$, the canonical morphism 
\[\coprod_{i\in I} \Hom_{\mathscr{C}}(G,X_i)\lra \Hom_{\mathscr{C}}(G,\coprod_{i\in I}X_i)\]
is an isomorphism. The full subcategory consisting of all compact objects in $\mathscr{C}$ is denoted by $\mathscr{C}^c$. For an additive category $\mathscr{C}$ and a full subcategory $\G$ of $\mathscr{C}$, we denote by $\G^{\perp}$ and ${}^{\perp}\G$ the full subcategories of Hom-orthogonal objects with respect to $\G$, that is,
$$\G^{\perp}=\left\{X\in\mathscr{C}:\Hom_{\mathscr{C}}(G,X)=0\text{ for all }G\in\G\right\},$$
$${}^{\perp}\G=\left\{X\in\mathscr{C}:\Hom_{\mathscr{C}}(X,G)=0\text{ for all }G\in\G\right\}.$$

Let $\T$ be a triangulated category. The shift fuctor of $\T$ is denoted by $[1]$. Throughout this paper, if no confusion arises, we will write 
$\Hom(-,-)$ for $\Hom_{\T}(-,-)$ and $\cone(f)$, called the cone of $f$, for the third term of a triangle $X\raf{f} Y\ra \cone(f)\ra X[1]$. The extension of  two full subcategories $\A$ and $\B$ is the full subcategory
\[\A*\B:=\left\{X\in\T\mid \text{there is a triangle }A\to X\to B\to A[1]\text{ such that }A\in\A,\,B\in\B\right\}.\]
Given three full subcategories $\mathscr{A},\mathscr{B}$ and $\mathscr{C}$ of $\T$, The Octahedral Axiom implies the associativity  $(\mathscr{A}*\mathscr{B})*\mathscr{C}=\mathscr{A}*(\mathscr{B}*\mathscr{C})$. Given a sequence of morphisms $X_*: X_1\lraf{f_1}X_2\lraf{f_2}\cdots$ in $\T$ such that the coproduct $\coprod X_i$ exists in $\T$, the homotopy colimit $\hocolim X_i$ is defined up to isomorphism by the triangle
\[\xymatrix{
\coprod X_i\ar[r]^{1-\shift} & \coprod X_i\ar[r] & \hocolim X_i\ar[r] & \coprod X_i[1].
}\]
If there is an integer $N$ such that $f_i$ is an isomorphism for all $i\geq N$, then $\hocolim X_i\simeq X_N$. If $G$ is a compact object of $\T$, then applying $\Hom(G,-)$ to the triangle above implies that 
\[\Hom(G,\hocolim X_i)\simeq \colim \Hom(G,X_i).\] 
A triangulated category $\T$ is said to be {\em compactly generated} by $\G\subseteq\T^c$ if, for each non-zero object $Y\in\T$, there is some $G\in\G$ and $n\in\mathbb{Z}$ such that $\Hom(G,Y[n])\neq 0$.

\begin{Lem}
\label{lem-desc-hocolim}
Suppose that $\T$ is a triangulated category with small coproducts, compactly generated by $\G$. If $X_1\lraf{f_1}X_2\lraf{f_2}\cdots$ is a sequence in $\T$ such that for each integer $k$, $X_i\in (\G[m])^{\perp}$ when $i$ is sufficiently large, then $\hocolim X_i\simeq 0$.
\end{Lem}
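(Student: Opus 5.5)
The plan is to use compact generation by $\G$: an object $Y$ with $\Hom(G[m],Y)=0$ for all $G\in\G$ and all $m\in\Z$ must vanish. So set $Y=\hocolim X_i$ and show $\Hom(G[m],Y)=0$ for every $G\in\G$ and every integer $m$. (The statement has a typo: the quantifier ``for each integer $k$'' should read ``for each integer $m$''. We may also let the threshold depend on $G$ as well as on $m$; this is harmless.)

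Fix $G\in\G$ and $m\in\Z$. Since $G$ is compact, so is $G[m]$. Applying $\Hom(G[m],-)$ to the defining triangle of the homotopy colimit gives the isomorphism recorded just before the lemma:
\[\Hom(G[m],\hocolim X_i)\simeq \colim_i \Hom(G[m],X_i).\]
By hypothesis there is an $N$ with $X_i\in(\G[m])^{\perp}$ for all $i\geq N$, so $\Hom(G[m],X_i)=0$ for $i\geq N$. A sequential colimit of abelian groups depends only on a tail of the sequence, so it vanishes.

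The only step needing care is the identification of $\Hom(G[m],\hocolim X_i)$ with the colimit. To spell it out, apply $\Hom(G[m],-)$ to the triangle. Compactness identifies $\Hom(G[m],\coprod X_i)$ with $\bigoplus_i \Hom(G[m],X_i)$, and the map $1-\shift$ becomes the standard map whose cokernel is $\colim_i\Hom(G[m],X_i)$. The same map on $\Hom(G[m],-[1])$, i.e.\ with $G[m-1]$ in place of $G[m]$, is injective, because its leading term is the identity. The long exact sequence then gives the isomorphism.

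Hence $\Hom(G[m],Y)=0$ for all $G\in\G$ and $m\in\Z$, and compact generation gives $Y\simeq 0$. No serious obstacle is expected; the argument just combines compactness with the tail-invariance of colimits.
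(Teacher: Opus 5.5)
Your proposal is correct and follows essentially the same route as the paper: you compute $\Hom(G[m],\hocolim X_i)\simeq\colim_i\Hom(G[m],X_i)=0$ via compactness, then conclude by compact generation. Your spelled-out justification of the colimit isomorphism (injectivity of $1-\shift$ on the shifted term) and your correction of the $k$/$m$ typo are both accurate.
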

\begin{proof}
For $G\in\G$ and integer $m$, the assumption just means that $\Hom(G[m], X_i)$ is zero when $i$ is sufficiently large. This implies that 
\[\Hom(G[m],\hocolim X_i)\simeq \colim \Hom(G[m],X_i)=0.\] 
Since $\G$ generates $\T$, we have $\{\G[m]\mid m\in\mathbb{Z}\}^{\perp}=\{0\}$. It follows that $\hocolim X_i\simeq 0$.
\end{proof}

\subsection{Metrics on triangulated categories and completions}
In this subsection, we recall definitions and basic facts of metrics on triangulated categories. Most are from Neeman's paper \cite{Neeman2018TheC}. 
\begin{Def}[\cite{Neeman2018TheC}]
\label{def-metric}
	Suppose $\T$ is a triangulated category. A {\em good metric} on $\T$ is a sequence of additive full subcategories of $\T$ $$\M=\{\M_i\subset\T\mid \,i\in\Z\}$$
	such that for all integers $i$, the following two conditions hold:
	\begin{itemize}
		\item [$(1)$] $\M_i*\M_i=\M_i$;
		\item [$(2)$]$\M_{i+1}[-1]\cup\M_{i+1}\cup\M_{i+1}[1]\subset\M_{i}$.
	\end{itemize}
\end{Def}

Two good metrics $\M$ and $\N$ on a triangulated category $\T$ are said to be equivalent if for each integer $n$, there are integers $m_1,m_2$ such that $\N_{m_1}\subseteq \M_n\subseteq N_{m_2}$. 
With respect to a good metric $\M$ on $\T$, a {\em Cauchy sequence} $X_*$ is a sequence of morphisms:
\[X_1\lraf{\alpha_1}X_2\lraf{\alpha_2}\cdots\]
in $\T$ such that for each integer $i$, there is a positive integer $N$ such that the cone of the composition $\alpha_n\alpha_{n-1}\cdots \alpha_m$ belongs to $\M_i$ for all $n>m>N$, or equivalently, for each $i\in\mathbb{Z}$, there is a positive integer $N$ such that  $\cone(\alpha_n)$  belongs to $\M_i$ for all $n>N$ (see \cite[Lemma 1.9]{Neeman2018TheC}).  By definition, it is easy to see that equivalent good metrics on $\T$ have the same Cauchy sequences.

%\begin{Def}\cite{Neeman2018TheC} Let $\left\{\M_i:i\in\Z\right\}$be a metric on the triangulated category $\T$, a sequence of morphisms $E_1\lraf{f_1}E_2\lraf{f_2}E_3\lraf{f_3}\cdots$ is called a Cauchy sequence with respect to the metric $\left\{\M_i:i\in\Z\right\}$ if for all $i\in\Z$, there exists $N>0$ such that for all$n>m>N$, the triangle $$E_m\lra E_n\lra D_{m,n}\lra E_m[1]$$ satisfies that $D_{m,n}\in\M_i$, where $E_m\to E_n$ is the composition of morphisms $$E_m\lraf{f_m}E_{m+1}\lraf{f_{m+1}}\cdots\lraf{f_{n-1}}E_n.$$ \end{Def}

%\begin{Prop}\label{easy-Cauchy-seq}\cite[Section 2.3]{ChenHongxing2024aa}Let $\left\{\M_i:i\in\Z\right\}$ be a metric on the triangulated category $\T$, the sequence of morphisms  $E_1\lraf{f_1}E_2\lraf{f_2}E_3\lraf{f_3}\cdots$ is a Cauchy sequence if for all $i\in\Z$, there exists $N>0$ such that for all $n>N$, $\cone(f_n)\in\M_i$.\end{Prop}

\medskip
To define the completion of a triangulated category $\T$ with respect to a good metric $\M$, one needs the category $\Modcat{\T}$ consisting of contravariant additive functors from $\T$ to the category $\Abcat$ of abelian groups. The shift functor $[1]$ of $\T$ naturally induces an automorphism on $\Modcat{\T}$, also denoted by $[1]$, which is defined as follows:
\begin{itemize}
	\item[$(1)$] For each $F\in\Modcat{\T}$, let $F[1]=F(-[-1])$, that is,  $F[1](X)=F(X[-1])$ for all objects  $X\in\T$.
	\item[$(2)$] For a morphism $\alpha:F_1\to F_1$ in $\Modcat{\T}$, which is a natural transformation from $F_1$ to $F_2$, the component of $\alpha[1]$ at an object $X\in\T$ is defined as $\alpha_{X[-1]}:F_1(X[-1])\to F_2(X[-1])$.
\end{itemize}
With the help of $\Modcat{\T}$, one can define the completion of  $\T$ with respect to a good metric on $\T$. 
\begin{Def}\cite{Neeman2018TheC}
Let $\M=\{\M_i\mid i\in\Z\}$ be a good metric on a triangulated category $\T$. There are three full subcategories of $\Modcat{\T}$: 	\begin{itemize}
\item [$(1)$]  $\precompletion{\T}{\M}$ consists of those functors $F\in\Modcat{\T}$ isomorphic to $\colim \Hom(-,X_i)$ for some Cauchy sequence $X_*$ in $\T$ with respect to $\M$. 
\item [$(2)$] $\boundfun{\T}{\M}$ consists of those $F\in \Modcat{\T}$ such that $F(\M_n)=0$ for some integer $n$. 
\item [$(3)$] $\completion{\T}{\M}=\precompletion{\T}{\M}\cap\boundfun{\T}{\M}$.
	\end{itemize}
With respect to the metric $\M$, the category $\precompletion{\T}{\M}$ is called the {\em precompletion} of $\T$ , and $\completion{\T}{\M}$ is called the {\em completion} of $\T$.  
\end{Def}
As we expected, the completion $\completion{\T}{\M}$ of a triangulated category $\T$ with respect to a good metric $\M$ is again a triangulated category (see \cite{Neeman2018TheC}*{Theorem 2.11}). A triangle in the completion $\completion{\T}{\M}$ is of the form 
\[\colim\Hom(-,X_i)\lraf{f}\colim\Hom(-,Y_i)\lraf{g}\colim\Hom(-,Z_i)\lraf{h}\colim\Hom(-,X_i)[1],\]
where $X_*, Y_*$, and $Z_*$ are Cauchy sequences with respect to $\M$ fitting into the following commutative diagram
	\[\xymatrix{		X_1\ar[r]\ar[d]^{f_1}&X_2\ar[r]\ar[d]^{f_2}&X_3\ar[r]\ar[d]^{f_3}&\cdots\\
		Y_1\ar[r]\ar[d]^{g_1}&Y_2\ar[r]\ar[d]^{g_2}&Y_3\ar[r]\ar[d]^{g_3}&\cdots\\
		Z_1\ar[r]\ar[d]^{h_1}&Z_2\ar[r]\ar[d]^{h_2}&Z_3\ar[r]\ar[d]^{h_3}&\cdots\\
		X_1[1]\ar[r]&X_2[1]\ar[r]&X_3[1]\ar[r]&\cdots\\
	}\]
such that all columns are triangle in $\T$, and $f,g$ and $h$ is given by the colimit of $f_i,g_i$ and $h_i$, respectively.

\medskip
The metric completion  $\completion{\T}{}$ of $\T$ is a full subcategory of the functor category $\Modcat{\T}$, which seems hard to describe. There is another way to realize $\completion{\T}{}$ as a triangulated subcategory of a triangulated category, by means of ``good extension". Suppose that there is a fully faithful triangle functor
\[F: \T\lra \tilde{\T}\]
from $\T$ to another triangulated category $\tilde{\T}$ which admits small coproducts. Let $\Y_F:\tilde{\T}\lra\Modcat{\T}$ be the functor sending an object $X\in\tilde{\T}$ to $\Hom(F(-),X)$ and a morphism $f$ in $\tilde{\T}$ to $\Hom(F(-),f)$. Then the composition $\Y_F\circ F$ is naturally isomorphic to the Yoneda functor $\Y: \T\lra \Modcat{\T}$. Now suppose that $\T$ admits a good metric $\M$. The functor $F$ is called a {\em good extension} with respect to the metric $\M$ if, for any Cauchy sequence $X_*$ in $\T$ with respect to $\M$, there is an isomorphism
\[\colim F(X_i)\simeq \Y_F(\hocolim F(X_i))\]
functorial in the Cauchy sequence. Note that for each triangulated category $\T$ with small coproducts and for any good metric on the full subcategory $\T^c$ consisting of all compact objects, the embedding functor  $\T^c\to\T$ is always a good extension (\cite[Example 3.9]{Neeman2018TheC}).

Suppose that $\T$ is a triangulated category admits a good metric $\M$, and that $F:\T\lra \tilde{\T}$ is a good extension. Let $\precompletionc{\T}{\M}$ be the full subcategory of $\tilde{\T}$ consisting of objects isomorphic to $\hocolim F(X_i)$ for some Cauchy sequene $X_*$ in $\T$ with respect to $\M$, and let \[\completionc{\T}{\M}:=\precompletionc{\T}{\M}\cap \Y_F^{-1}(\boundfun{\T}{\M}).\]
Neeman proved that $Y_F$ restricts to an equivalence between 
$\precompletionc{\T}{\M}$ and $\precompletion{\T}{\M}$, and a triangle equivalence between $\completionc{\T}{\M}$ and $\completion{\T}{\M}$.  

Let us give some examples of good metrics on triangulated categories. Let $\A$ be a full subcategory of a triangulated category $\T$. For a non-empty subset $\Phi$ of $\mathbb{Z}$, let
\[\A[\Phi]=\left\{A[i]\mid i\in \Phi,\,A\in\A\right\},\]
and let 
\[\langle \A\rangle_1^{\Phi}:=\add(\A[-\Phi]), \mbox{ and }\langle\A\rangle_n^{\Phi}:=\add(\langle \A\rangle_1^{\Phi}*\langle \A\rangle_{n-1}^{\Phi}).\] 
for $n>1$. In case that $\T$ admits small coproducts,  we define 
\[\overline{\langle \A\rangle}_1^{\Phi}:=\Add(\A[-\Phi]), \mbox{ and }\overline{\langle\A\rangle}_n^{\Phi}:=\Add(\overline{\langle \A\rangle}_1^{\Phi}*\overline{\langle \A\rangle}_{n-1}^{\Phi}).\] 
for $n>1$.
Taking union yields
$$\left\langle \A\right\rangle^{\Phi}=\bigcup_{n>0}\left\langle \A\right\rangle_n^{\Phi},\quad\quad\overline{\left\langle \A\right\rangle}^{\Phi}=\bigcup_{n>0}\overline{\left\langle \A\right\rangle}_n^{\Phi}.$$
Defining 
\[\M:=\{\M_n:=\langle\A\rangle^{(-\infty,-n]}\mid n\in\mathbb{Z}\}\]
gives a good metric on $\T$. In case that $\T$ admits small coproducts, 
\[\N:=\{\N_n:=\overline{\langle\A\rangle}^{(-\infty,-n]}\mid n\in\mathbb{Z}\}\]
is a good metric on $\T$. 

For an arbitrary ring $R$, its derived category $\D[\Modcat{R}]$ of arbitrary right $R$-modules is compactly generated. The compact objects are precisely those complexes isomorphic to bounded complexes of finitely generated projective $R$-modules. Thus the embedding $\Kb[\projcat{R}]\hookrightarrow \D[\Modcat{R}]$ is a good extension for any good metric on $\Kb[\projcat{R}]$. Consider the canonical metric $\M:=\left\{\langle R\rangle^{(-\infty,-n]}\mid n\in\mathbb{Z}\right\}$ on $\Kb[\projcat{R}]$, then we have 
\[\precompletionc{\Kb[\projcat{R}]}{\M}=\Kf[\projcat{R}],\quad \completionc{\Kb[\projcat{R}]}{\M}=\Kfb[\projcat{R}].\]
In case that $R$ is a right coherent, let $\modcat{R}$ be the category of finitely presented right $R$-modules. Then $\modcat{R}$ is an abelian category and  
$\N:=\left\{\overline{\left\langle R \right\rangle}^{(-\infty,-n]}\cap\Db[\modcat{R}]\mid n\in\Z\right\}$ is a good metric on $[\Db[\modcat{R}]]\opp$. It is shown \cite[Proposition 5.6]{Neeman2018TheC} that the embedding $[\Db[\modcat{A}]]\opp\hookrightarrow [\Dw[\Modcat{A}]]\opp$ is a good extension with respect to $\N$ and the completions are as follows. 
$$\precompletionc{[\Db[\modcat{R}]]\opp}{\N}=[\Df[\modcat{R}]]\opp,\quad \completionc{[\Db[\modcat{R}]]\opp}{\N}=[\Kb[\projcat{R}]]\opp.$$
%This was used by Neeman  to give an elegant  proof of the fact: two right coherent rings are derived equivalent if and only if their derived categories of bounded complexes of finitely presented modules are triangle equivalent. 

\subsection{Silting subcategories, $t$-structures and co-$t$-structures}
Both $t$-structures and co-$t$-structures on a triangulated category naturally give rise to good metrics on the triangulated category. 

\begin{Def}[\cite{Beilinson1982aa}]
	\label{def-tstr}
	A $t$-structure $t=\left(t^{\leq0},t^{\geq0}\right)$ on a triangulated category $\T$ is a pair of full subcategories satisfying the following conditions.
	\begin{itemize}
		\item [$(1)$] $t^{\leq0}[1]\subset t^{\leq0}$, and $t^{\geq0}[-1]\subset t^{\geq0}$,
		\item [$(2)$] $\Hom\left(t^{\leq0},t^{\geq0}[-1]\right)=0$,
		\item[$(3)$] for any $X\in\T$, there is a triangle $$\tau^{\leq 0}X\to X\to \tau^{\geq 1}X\to (\tau^{\leq 0}X)[1]$$ such that $\tau^{\leq 0}X\in t^{\leq0}$, $\tau^{\geq 1}X\in t^{\geq0}[-1]$. 
	\end{itemize}
\end{Def}

For a $t$-structure $t=\left(t^{\leq0},t^{\geq0}\right)$ on a triangulated category $\T$, we define 
\[t^{\leq n}=t^{\leq0}[-n],\quad t^{\geq n}=t^{\geq0}[-n] \mbox{ and } t^n=t^{\leq n}\cap t^{\geq n}.\]
It is well-known that $t^0$ is an  abelian category, called the {\em heart} of $t$. In particular, for any $X,\,Y\in t^0$ and integer $m<0$, we have $\Hom\left(X,Y[m]\right)=0$. For each integer $n$, the triangle in Definition \ref{def-tstr} gives rise to functors $\tau^{\leq n}=[-n]\circ \tau^{\leq 0}(-[n]):\T\ra t^{\leq n}$ and $\tau^{\geq n+1}=[-n]\circ\tau^{\geq 1}(-[n]):\T\ra t^{\geq n+1}$. The $n$th homological functor $H^n:\T\ra t^0$ is defined to be $H^n=[n]\circ\tau^{\geq n}\circ\tau^{\leq n}$, which is also isomorphic to $[n]\circ\tau^{\leq n}\circ\tau^{\geq n}$.

Two $t$-structures   $t_1=\left(t_1^{\leq0},t_1^{\geq0}\right)$ and $t_2=\left(t_2^{\leq0},t_2^{\geq0}\right)$  on a triangulated category $\T$ are called {\em equivalent} if there exists an integer $n\geq0$ such that $t_1^{\leq -n}\subset t_2^{\leq0}\subset t_1^{\leq n}$.  For $t$-structure $t=\left(t^{\leq0},t^{\geq0}\right)$ on a triangulated category $\T$, one can define the following full subcategories of $\T$. 
\[\T^-\coloneq\bigcup_{n\in\Z}t^{\leq n}, \quad \T^+\coloneq\bigcup_{n\in\Z}t^{\geq n}, \quad \T^b\coloneq\T^-\cap\T^+.\]
The $t$-structure $t$ is called {\em bounded above} ({\em bound below, bounded}, respectively) provided that $\T^-=\T$ ($\T^+=\T, \T^b=\T$, respectively), and is called {\em non-degenerate} if $\bigcap_{n\in\Z}t^{\leq n}=0=\bigcap_{n\in\Z}t^{\geq n}$. By definition, equivalent $t$-structures induce the same subcategories $\T^-$, $\T^+$ and $\T^b$.

\begin{Def}[\cite{Bondarko2007aa,Pauks2008aa}]
\label{def-cot}
	A co-$t$-structure $c=\left(c_{\geq0},c_{\leq0}\right)$ on a triangulated category $\T$ is a pair of full  subcategories satisfying the following conditions.
	\begin{itemize}
		\item [$(1)$] $c_{\geq0}[-1]\subset c_{\geq0}$  and $c_{\leq0}[1]\subset c_{\leq0}$.
		\item [$(2)$] $\Hom\left(c_{\geq0},c_{\leq0}[1]\right)=0$.
		\item [$(3)$] For any $X\in\T$, there is a triangle 
		$$A\to X\to B\to A[1]$$
		such that $A\in c_{\geq0}$ and $B\in c_{\leq0}[1]$.
		\item [$(4)$] The subcategories $c_{\geq0}$ and $c_{\leq0}$ are closed under direct summands.
	\end{itemize}
\end{Def}
Given a co-$t$-structure $c=\left(c_{\geq0},c_{\leq0}\right)$ on a triangulated category $\T$,  for any $n\in\Z$, we set $c_{\geq n}=c_{\geq0}[-n]$, $c_{\leq n}=c_{\leq0}[-n]$. The intersection $c_0=c_{\geq0}\cap c_{\leq0}$ is called the {\em co-heart} of the co-$t$-structure $(c_{\geq0},c_{\leq0})$. Similarly one can define the concepts of bounded-above, bounded-below and bounded co-$t$-structures. Recall that a full additive subcategory $\G$ of $\T$ is called a {\em presilting subcategory} if $\Hom(G,G'[n])=0$ for all $G,G'\in\G$ and $n>0$, and is called a {\em silting subcategory} if $\G$ is presilting and $\thick(\G)=\T$.  If a silting subcategory $\G=\add(G)$ for some object $G\in\T$, then $G$ is called a {\em silting object} in $\T$. 

The following lemma summaries some basic facts on $t$-structures and co-$t$-structures. 
\begin{Lem}
\label{lem-facts-t-cot}
Let $\T$ be a triangulated category. Then the following hold.
\begin{enumerate}
    %\item  If $t$ is a $t$-structure on $\T$, then $t^{\leq n}={}^{\perp}(t^{\geq n+1})$ and $t^{\geq n+1}=(t^{\leq n})^{\perp}$ for all integers $n\in \mathbb{Z}$. In particular, $t^{\leq n}$ and $t^{\geq n}$ are closed under extension for all $n\in\mathbb{Z}$. 
   
    %\item[$(3)$] If $c$ is a co-$t$-structure on $\T$, then both  $c_{\leq n}$ and $c_{\geq n}$ are closed under extension for all $n\in\mathbb{Z}$.
    \item The co-heart of a bounded co-$t$-structure of $\T$ is a silting subcategory of $\T$.
    \item If $\G$ is  a silting subcategory of $\T$,  then 
	\[\left(\left\langle \G\right\rangle^{[0,\infty)}, \left\langle \G\right\rangle^{(-\infty,0]}\right)\]
is a bounded co-$t$-structure on $\T$ with co-heart $\G$. 
 \item Two $t$-structures $t_1$ and $t_2$ on $\T$ coincide if and only if $t_1^{\leq0}\subset t_2^{\leq0},\,t_1^{\geq0}\subset t_2^{\geq0}$.
\item Two co-$t$-structures $c^1$ and $c^2$ coincide if and only if $c^1_{\leq0}\subset c^2_{\leq0}$ and $c^1_{\geq0}\subset c^2_{\geq0}$. 
    \item[$(7)$] If $t$ is a $t$-structure on $\T$, then $\left\{t^{\leq -n}\mid n\in\Z\right\}$ and $\left\{t^{\geq n}\mid n\in\Z\right\}$ are good metrics on $\T$.

\item  If $c$ is a co-$t$-structure on $\T$, then both $\left\{c_{\leq-n}\mid n\in\Z\right\}$ and $
		\left\{c_{\geq n}\mid n\in\Z\right\}$ are good metrics on $\T$.
\end{enumerate}
\end{Lem}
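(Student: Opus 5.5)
Most items here are standard, and I would prove them in the order (3), (4), (7), (8), (2), (1), because the later items use the orthogonality descriptions established first.

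For (3) and (4) I would first record the usual orthogonality identities. For a $t$-structure these are $t^{\leq 0}={}^{\perp}(t^{\geq 1})$ and $t^{\geq 1}=(t^{\leq 0})^{\perp}$. To prove them, take $X\in{}^{\perp}(t^{\geq 1})$ and its truncation triangle $\tau^{\leq 0}X\to X\to\tau^{\geq 1}X\to$. The map $X\to\tau^{\geq 1}X$ vanishes, so $X$ is a summand of $\tau^{\leq 0}X$. Since $\tau^{\leq 0}$ is right adjoint to the inclusion, $X\simeq\tau^{\leq 0}X$. The other identity is dual.

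For a co-$t$-structure the analogous statements are $c_{\geq 0}={}^{\perp}(c_{\leq -1})$ and $c_{\leq 0}=(c_{\geq 1})^{\perp}$. Here the splitting argument only shows that $X$ is a direct summand of an object of the relevant class, which is exactly why axiom (4) (closure under summands) is included. With these identities, (3) follows: $t_1^{\geq 0}\subset t_2^{\geq 0}$ gives $t_2^{\leq 0}={}^{\perp}(t_2^{\geq 1})\subset{}^{\perp}(t_1^{\geq 1})=t_1^{\leq 0}$, and combined with $t_1^{\leq 0}\subset t_2^{\leq 0}$ this yields equality; similarly for the other half. Item (4) follows in the same way.

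For (7) and (8), the orthogonality descriptions show that each aisle, coaisle, and co-aisle is closed under extensions and contains $0$. It is also additive and closed under summands, since it is defined by a Hom-vanishing condition. Hence each satisfies $\M_i*\M_i=\M_i$. The shift condition holds because $t^{\leq -n-1}=t^{\leq -n}[1]\subset t^{\leq -n}$. It therefore suffices to check that $t^{\leq -n-1}[-1]=t^{\leq -n}$ is contained in $t^{\leq -n}$, that $t^{\leq -n-1}\subset t^{\leq -n}$, and that $t^{\leq -n-1}[1]=t^{\leq -n-2}\subset t^{\leq -n}$. All three follow from $t^{\leq 0}[1]\subset t^{\leq 0}$. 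The families $\{t^{\geq n}\}$, $\{c_{\leq -n}\}$ and $\{c_{\geq n}\}$ are handled identically, using axiom (1) of the respective definition.

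For (2), take a silting subcategory $\G$ and set $\A=\langle\G\rangle^{[0,\infty)}$ and $\B=\langle\G\rangle^{(-\infty,0]}$.
\begin{enumerate}
\item Closure under the required shifts and under summands is built into the definition via $\add$.
\item The orthogonality $\Hom(\A,\B[1])=0$ reduces by d\'evissage to $\Hom(G[-i],G'[j+1])=0$ for $i,j\geq 0$, which holds because $\G$ is presilting.
\item For the existence of decompositions, I would show that the class of $X$ admitting a triangle $A\to X\to B$ with $A\in\A$ and $B\in\B[1]$ contains $\G[m]$ for all $m$ and is closed under extensions and summands. Since $\thick(\G)=\T$, every object is then in it. Closure under extensions is where the octahedral axiom enters, using the identities $\B[1]*\A\subset\A*\B[1]$ (or rather "$\B[1]*\A\subseteq \A*\B[1]$", since $\Hom(\A,\B[1])=0$ does not directly give this). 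This step is the main obstacle. I would use the standard approach of Aihara--Iyama and Bondarko: first prove that $\thick(\G)=\bigcup_n\langle\G\rangle^{[-n,n]}$, and then that $\langle\G\rangle^{[-n,n]}\subseteq\A*\B[1]$ by induction on $n$, peeling off the top shift using presilting vanishing.
\item Boundedness follows from the same description $\thick(\G)=\bigcup_n\langle\G\rangle^{[-n,n]}$.
\item The co-heart is $\A\cap\B=\add(\G)=\G$. For this I would use a truncation argument to show that an object in $\A\cap\B$ is a summand of an object of $\G$.
\end{enumerate}

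For (1), let $c$ be a bounded co-$t$-structure with co-heart $c_0$. Presilting follows because $c_0\subset c_{\geq 0}$ and $c_0[n]\subset c_{\leq -n}\subset c_{\leq 0}[1]$ for $n\geq 1$, so axiom (2) gives $\Hom(c_0,c_0[n])=0$. For generation, I would show by induction on the length of the interval $[a,b]$ that every object of $c_{\geq a}\cap c_{\leq b}$ lies in $c_0[-a]*\cdots*c_0[-b]$. The inductive step takes the co-$t$-truncation triangle of $X[b]$ and uses the orthogonality identities to place both pieces in the correct ranges. Boundedness then gives $\thick(c_0)=\T$.
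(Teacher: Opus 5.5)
The paper states this lemma without proof, as a summary of standard facts (items (1)--(2) are the Bondarko/Aihara--Iyama results on silting and co-$t$-structures), so there is no argument in the paper to compare with.

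Your arguments for (3), (4), (7), (8) are correct: the orthogonality identities, the use of the summand axiom for co-$t$-structures, and the shift bookkeeping are all right. Your argument for (1) is also correct up to one slip. The filtration should be $c_0[-b]*c_0[-b+1]*\cdots*c_0[-a]$, because the truncation of $X\in c_{\geq a}\cap c_{\leq b}$ with respect to $(c_{\geq b},c_{\leq b})$ puts the $c_0[-b]$-piece on the left. In the order you wrote, every extension would split, since the connecting maps lie in $\Hom(c_0,c_0[m])$ with $m\geq 2$. This does not affect $\thick(c_0)=\T$.

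The genuine gap is the decomposition axiom in (2), and you have put the difficulty in the wrong place.

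\emph{Extensions are routine.} Take $X\to Y\to Z\xrightarrow{w}X[1]$ with decompositions $A_X\to X\to D_X$ and $A_Z\xrightarrow{a_Z}Z\to D_Z$. The composite $A_Z\xrightarrow{wa_Z}X[1]\to D_X[1]$ vanishes because $D_X[1]\in\B[2]\subseteq\B[1]$. Hence $wa_Z$ lifts to a map $A_Z\to A_X[1]$. The $3\times3$ lemma then gives a triangle $A'\to Y[1]\to D'$ with $A'\in\A[1]$ and $D'\in\B[2]$, so $Y\in\A*\B[1]$.

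\emph{Direct summands are the real issue.} Both $\thick(\G)$ and each $\langle\G\rangle^{[-n,n]}$ contain summands of iterated extensions. You assert that $\A*\B[1]$ is closed under summands without argument, and this is not formal: co-$t$-truncations are not functorial, so a decomposition of $X\oplus X'$ cannot simply be split along the idempotent.

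\emph{Your fallback does not repair this.} With the paper's definitions, where $\add$ is built in at every stage, $\thick(\G)=\bigcup_n\langle\G\rangle^{[-n,n]}$ is a tautology. ``Peeling off the top shift'' presupposes that every object of $\langle\G\rangle^{[-n,n]}$ already lies in the normal form $\G[-n]*\G[-n+1]*\cdots*\G[n]$. Reordering honest extensions is easy: $\G[a]*\G[b]\subseteq\G[b]*\G[a]$ for $a\geq b$, since such extensions split. But closure of the normal-form class under summands is exactly what Aihara--Iyama prove when they show $\T=\bigcup_l\G[-l]*\cdots*\G[l]$. Once you have that, no induction is needed, because $\G[-n]*\cdots*\G[n]=(\G[-n]*\cdots*\G[0])*(\G[1]*\cdots*\G[n])\subseteq\A*\B[1]$.

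\emph{An elementary bypass.} Extension closure gives $\mathrm{tri}(\G)\subseteq\A*\B[1]$. Every $X\in\thick(\G)$ is a summand of some $Y\cong X\oplus X'$ in $\mathrm{tri}(\G)$. Taking the cone of $0_X\oplus 1_{X'}$ shows $X[j]\oplus X[j+1]\in\mathrm{tri}(\G)$ for all $j$. Also $X[k]\in\B[1]$ for $k\gg0$. The split triangle $X[j]\oplus X[j+1]\to X[j]\to X[j+2]\to$ shows $X[j]\in(X[j]\oplus X[j+1])*X[j+2]$. Descending induction in steps of $2$ then gives $X\in\A*\B[1]$.

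\emph{The co-heart.} Your ``truncation argument'' for the co-heart needs the same normal form. Write $X\in\A\cap\B$ as a summand of some $Y\in\G[-s]*\cdots*\G[0]$, with a triangle $Y'\to Y\xrightarrow{q}G_0$ where $Y'\in\A[-1]$. Since $\Hom(\A[-1],\B)=0$, the projection $Y\to X$ kills $Y'$ and so factors through $q$. Hence $X$ is a summand of $G_0\in\G$.
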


\iffalse 
A silting subcategory of a triangulated category always induces a co-$t$-structure.
\begin{Prop}\label{silting-co-t-struc}(\cite[Section 2]{Aihara2012aa})
	Let $\G$ be a silting subcategory of a triangulated category $\T$.  Then 
	\[\left(\left\langle \G\right\rangle^{[0,\infty)}, \left\langle \G\right\rangle^{(-\infty,0]}\right)\]
is a bounded co-$t$-structure on $\T$ with co-heart $\G$. 
\end{Prop}
\fi

\section{Restrictions of $t$-structures and co-$t$-structures induced by silting subcategories}
\label{sect-restr}
In this section, we fix a compactly generated triangulated category $\T$ with small coproduct. Let $\T^c$ be the full subcategory consisting of compact objects. It was proved in \cite[Theorem A.1]{Tarrio2003} that any set $\G$ of compact objects in $\T$ would generate a $t$-structure $(\overline{\langle\G\rangle}^{(-\infty,0]}, (\G[\mathbb{Z}_+])^{\perp})$ on $\T$. We denote this $t$-structure by $(\T_{\G}^{\leq 0},\T_{\G}^{\geq 0})$.  In case that the additive closure $\G$ of the generating set is a silting subcategory of $\T^c$, this was also proved  in \cite[Chapter 3 Theorem 2.3, Proposition 2.8]{Beligiannis2007aa} (see also \cite[Theorem1.3]{Hoshino2002aa}). In this case, the left part $\T_{\G}^{\leq 0}=(\G[\mathbb{Z}_-])^{\perp}$. 

We only consider the silting case and assume that $\G$ is generating set of compact objects such that $\add(\G)$ is a silting subcategory of $\T^c$. In this case, 
\[\left(\left\langle \G\right\rangle^{[0,\infty)}, \left\langle \G\right\rangle^{(-\infty,0]}\right)\] is a bounded co-$t$-structure on $\T^c$ with co-heart $\G$. We have the following proposition.
\begin{Prop}
\label{prop-silt-completion}
Suppose that $\T$ is a compactly generated triangulated category with small coproducts and that $\G$ is a silting subcategory of $\T^c$.  Let $\M^{\G}=\{\langle \G\rangle^{(-\infty,-n]}\mid n\in\mathbb{Z}\}$ be the good metric on $\T^c$ induced by $\G$. Then 
\[\precompletionc{\T^c}{\M^{\G}}=\T_c^{-}:=\bigcap_{n\in\mathbb{Z}}(\T^c*\T_{\G}^{\leq n}),\quad \completionc{\T^c}{\M^{\G}}=\T_c^b:=\T_c^-\cap \left(\bigcup_{n\in\mathbb{Z}}\T_{\G}^{\geq n}\right)\]
\end{Prop}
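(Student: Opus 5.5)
We prove the two equalities $\precompletionc{\T^c}{\M^{\G}}=\T_c^-$ and $\completionc{\T^c}{\M^{\G}}=\T_c^b$ by showing two inclusions each. Two facts are used throughout. First, $\M^{\G}_n=\langle\G\rangle^{(-\infty,-n]}\subseteq \T_{\G}^{\leq -n}$, because $\T_{\G}^{\leq -n}=\overline{\langle\G\rangle}^{(-\infty,-n]}$ is closed under extensions, summands and the relevant shifts. Second, since $\langle\G\rangle^{[0,\infty)}$ and $\langle\G\rangle^{(-\infty,0]}$ form a bounded co-$t$-structure on $\T^c$ (Lemma \ref{lem-facts-t-cot}), every compact $C$ admits, for each $n$, a triangle $A\to C\to B\to A[1]$ with $A\in\langle\G\rangle^{[n,\infty)}$ (up to shift conventions) and $B\in\M^{\G}_{n'}$, where $n'$ is $n$ or $n\pm1$ depending on normalization.

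\emph{Inclusion $\precompletionc{\T^c}{\M^{\G}}\subseteq \T_c^-$.} Let $X_*$ be a Cauchy sequence in $\T^c$ and $X=\hocolim X_i$. Fix $n$ and choose $N$ such that $\cone(X_m\to X_{m'})\in\M_{n+1}$ for all $m'>m\geq N$. Then the cones $D_{m'}$ of $X_N\to X_{m'}$ form a sequence in $\M_{n+1}\subseteq\T_{\G}^{\leq -n-1}$. Taking homotopy colimits of the triangles $X_N\to X_{m'}\to D_{m'}\to X_N[1]$ (constant in the first term) gives a triangle $X_N\to X\to D\to X_N[1]$ with $D=\hocolim D_{m'}$. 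Since $\T_{\G}^{\leq -n-1}$ is the aisle of a $t$-structure generated by compacts, it is closed under coproducts, hence under homotopy colimits. Therefore $D\in \T_{\G}^{\leq -n-1}$ and $X\in\T^c*\T_{\G}^{\leq -n-1}\subseteq \T^c*\T_{\G}^{\leq n}$ for every $n$, as required.

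\emph{Inclusion $\T_c^-\subseteq\precompletionc{\T^c}{\M^{\G}}$.} This is the main obstacle, since we must build a Cauchy sequence. Given $X\in\T_c^-$, for each $n$ choose a triangle $C_n\to X\to D_n$ with $C_n$ compact and $D_n\in\T_{\G}^{\leq -n}$. We will replace $C_n$ by a compact object lying in a controlled co-$t$-range and construct inductively maps $E_n\to E_{n+1}$ compatible with the maps $E_n\to X$, with $\cone(E_n\to E_{n+1})\in\M_{n-c}$ for a fixed constant $c$. The tool is the orthogonality $\Hom(\langle\G\rangle^{[0,\infty)}[-?],\T_{\G}^{\leq -m})=0$ for suitable ranges: $\G$ is silting and $\T_{\G}^{\leq -m}=(\G[\mathbb{Z}_{<m}])^{\perp}$ in the appropriate indexing. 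Using this orthogonality, maps from the head part of $C_n$ to $X$ factor through $C_{n+1}$. Concretely, decompose $C_{n+1}$ by the co-$t$-structure, compare with $C_n$ via the octahedral axiom, and use that a compact object mapping into $\T_{\G}^{\leq -n}$ factors through its $\M_{n}$-tail. Having obtained the sequence, we show $X\simeq\hocolim E_n$: the cones of $\hocolim E_n\to X$ have, by Lemma \ref{lem-desc-hocolim} and compact generation, vanishing $\Hom(G[m],-)$ for all $m$, because $\Hom(G[m],E_n)\to\Hom(G[m],X)$ is an isomorphism for $n\gg m$. The latter holds since $D_n\in\T_{\G}^{\leq -n}$ and $G[m]$ is orthogonal to it in low degrees.

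\emph{Completion.} By Neeman's description, $\completionc{\T^c}{\M^{\G}}$ consists of the $X$ in the precompletion with $\Hom(\M_n,X)=0$ for some $n$. The category $\M_n=\langle\G\rangle^{(-\infty,-n]}$ is generated under extensions and summands by $\G[j]$ with $j\geq n$. So the vanishing condition is equivalent to $\Hom(\G[j],X)=0$ for all $j\geq n$, that is, $X\in(\G[\mathbb{Z}_{\geq n}])^{\perp}=\T_{\G}^{\geq -n+1}$ up to reindexing. Hence the completion equals $\T_c^-\cap\bigcup_n\T_{\G}^{\geq n}=\T_c^b$.
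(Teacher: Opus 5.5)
Your proof is correct, and its overall shape matches the paper: two inclusions for the precompletion, then the completion read off from $\Hom(\M^{\G}_n,X)=0$. The completion step is the same as the paper's.

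\textbf{Where the routes differ.} The real difference is in $\T_c^-\subseteq\precompletionc{\T^c}{\M^{\G}}$.
\begin{itemize}
\item The paper never truncates the compact approximations. Given $E_i\to X$ with cone in $\T_{\G}^{\leq r_i}$, it uses only that the compact object $E_i$ is orthogonal to $\T_{\G}^{\leq r}$ for $r\ll 0$, and it picks the next depth $r_{i+1}$ that small. The factorization $E_i\to E_{i+1}$ over $X$ is then automatic, and the convergence rate is set adaptively by the ranges of the $E_i$.
\item You fix the depth $-n$ at stage $n$ and cut $C_n$ down to its co-$t$-head. This gives a uniform rate, at the cost of using the co-$t$-structure on $\T^c$.
\end{itemize}

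\textbf{Making your sketch precise.} Take a triangle $E_n\to C_n\to B_n\to E_n[1]$ with $E_n$ built from $\G[j]$, $j\leq n-1$ (so $E_n\in\langle\G\rangle^{[1-n,+\infty)}$), and $B_n\in\M^{\G}_n$.
\begin{itemize}
\item By the octahedral axiom, $\cone(E_n\to X)\in B_n*D_n\subseteq\T_{\G}^{\leq -n}$.
\item Since $\Hom(E_n,\T_{\G}^{\leq -n-1})=0$, the map $E_n\to X$ factors through $E_{n+1}\to X$.
\item By the octahedral axiom again, $\cone(E_n\to E_{n+1})$ is a compact object of $\T_{\G}^{\leq -n}$.
\item Apply your remark (``a compact object mapping into $\T_{\G}^{\leq -n}$ factors through its $\M_n$-tail'') to the identity of this cone. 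It shows the cone is a summand of an object of $\M^{\G}_n$, hence lies in $\M^{\G}_n$. So your constant is $c=0$. The equality $\T^c\cap\T_{\G}^{\leq -n}=\M^{\G}_n$ used here is also used in the paper.
\end{itemize}

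\textbf{Identifying the limit.} You identify $X\simeq\hocolim E_n$ through a comparison map and compact generation. This is cleaner than the paper's approach, which takes a homotopy colimit of triangles and applies Lemma~\ref{lem-desc-hocolim} to the truncations. Your version needs only compact generation, not that lemma.

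\textbf{The other inclusion.} For $\precompletionc{\T^c}{\M^{\G}}\subseteq\T_c^-$, the paper computes $\Hom(G[m],-)$ along $E_N\to\hocolim E_i$ directly. You instead use closure of the aisle under coproducts. Homotopy colimits of triangles are not canonical, so state the step this way: the $3\times3$ lemma, applied to the maps $1-\shift$ on the coproducts, gives a triangle $X_N\to X\to Z\to X_N[1]$ in which $Z$ is the cone of \emph{some} endomorphism of $\coprod D_{m'}$. That alone forces $Z\in\T_{\G}^{\leq -n-1}$.
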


In case that $\G=\add(G)$ for some object $G$ of $\T^c$,  Proposition \ref{prop-silt-completion} follows from \cite[Theorem 3.15, Example 4.2, Proposition 5.6]{Neeman2018TheC} in the sense of (weakly) approximable triangulated categories. In the general case that $\G$ is a silting subcategory, one may refer to \cite{Rahul2025} for proof for ``(weakly) $\G$-approximable triangulated categories". For the convenience of the reader, we give a direct proof here.

\begin{proof}
    We frist claim that $\precompletionc{\T^c}{\M^{\G}}\subseteq \T_c^-$.  Take $X\in\precompletionc{\T^c}{\M^{\G}}$,  there is a Cauchy sequence $E_*: E_1\lraf{f_1}E_2\lraf{f_2}\cdots$
	in $\T^c$ such that $X=\hocolim E_i$. For each integer $d$, since $E_*$ is a Cauchy sequence, there exists $N>0$ such that for any $n\geq N$, the cone  of $f_n$ belongs to $\left\langle \G\right\rangle^{(-\infty,d-1]}$. Thus $\cone(f_n)[-1]\in \left\langle \G\right\rangle^{(-\infty,d]}$. For each $Q\in\langle\G\rangle^{[d+1,+\infty]}$, applying $\Hom(Q, -)$ to the triangle 
	\[\cone(f_n)[-1]\lra E_n\lraf{f_n}E_{n+1}\lra \cone(f_n)\]
implies that $\Hom(Q, f_n)$ is an isomorphism in this case. Consider the triangle 
\[\xi_N:\quad E_N\lraf{\lambda}\hocolim E_i\lra Y\lra E_N[1],\] where $\lambda$ is the canonical morphism. For each $G\in\G$ and $m<-d$, we have $G[m]\in \langle\G\rangle^{[d+1,+\infty]}$ and that $\Hom(G[m],f_n)$ is an isomorphism for all $n\geq N$. It follows that 
\[\Hom(G[m],\lambda): \Hom(G[m],E_N)\lra \Hom(G[m],\hocolim E_i)=\colim \Hom(G[m],E_i)\]
is an isomorphism. Since $m-1<m<-d$, the morphism $\Hom(G[m-1],\lambda)$ is also an isomorhpism. From the exact sequence obtained by applying $\Hom(G[m],-)$ to the above triangle $\xi_N$, we deduce that $\Hom(G[m],Y)=0$ for all $m<-d$, that is, $Y\in\T_{\G}^{\leq d}$. Note that $E_N\in\T^c$. The triangle $\xi_N$ shows that $X=\hocolim E_i\in \T^c*\T_{\G}^{\leq d}$. It follows that $X\in\bigcap_{d\in\mathbb{Z}}\T^c*\T_{\G}^{\leq d}=\T_c^-$.

Next we prove that $\T_c^-\subseteq \precompletionc{\T^c}{\M^{\G}}$. Take any object $X\in\T_c^-$,  and set $r_1=1$. There is a triangle 
\[E_1\lraf{f_1} X\lraf{g_1} X^{\leq r_1}\lra E_1[1]\]
in $\T$ such that $E_1\in\T^c$, $X^{\leq r_1}\in\T_{\G}^{\leq r_1}$. There is an integer $s_1$ such that $\Hom(G, E_1[k])=0$ for all $k>s_1$, that is, $E_1\in \langle \G\rangle^{[-s_1,+\infty)}$.  Let $r_2=\min\left\{ r_1,-s_1\right\}-1$. There is a triangle 
\[E_2\lraf{f_2} X\lraf{g_2} X^{\leq r_2}\lra E_2[1]\]
in $\T$ such that $E_2\in\T^c$, $X^{\leq r_2}\in\T_{\G}^{\leq r_2}$. Thus $\Hom(E_1, X^{\leq r_2})=0$ and consequently we have a commutative diagram
\[\xymatrix{
E_1\ar[r]^{f_1}\ar[d]^{\alpha_1} & X\ar[r]^{g_1}\ar@{=}[d] & X^{\leq r_1}\ar[r]\ar[d]^{\beta_1} & E_1[1]\ar[d]^{\alpha_1[1]}\\
E_2\ar[r]^{f_2} & X\ar[r]^{g_2} & X^{\leq r_2}\ar[r] & E_2[1]\\
}\]
Moreover, there is an isomorphism 
\[\cone(\alpha_1)\simeq \cone(\beta_1)[-1]\in \T^c\cap\T_{\G}^{\leq r_1-1}=\langle\G\rangle^{(-\infty,r_1-1]}\]
by the Octahedral Axiom. Repeating this process, we can inductively construct a commutative diagram

\[\xymatrix{
	E_1\ar[r]^{\alpha_1}\ar[d]_{f_1}&E_1\ar[r]^{\alpha_2}\ar[d]_{f_2}&E_3\ar[r]\ar[d]_{f_3}&\cdots\\
	X\ar@{=}[r]\ar[d]_{g_1}&X\ar@{=}[r]\ar[d]_{g_2}&X\ar@{=}[r]\ar[d]_{g_3}&\cdots\\
	X^{\leq r_1}\ar[r]^{\beta_1}\ar[d]&X^{\leq r_2}\ar[r]^{\beta_2}\ar[d]&X^{\leq r_3}\ar[r]\ar[d]&\cdots\\
	E_1[1]\ar[r]&E_2[1]\ar[r]&E_3[1]\ar[r]&\cdots
	}\]
in $\T$ such that the following hold.
\begin{itemize}
    \item Each column is a triangle and $E_i\in \T^c$ for all $i\geq 1$.
    \item $r_{i+1}\leq r_i-1$ for all $i\geq 1$ and $X^{\leq r_i}\in\T_{\G}^{\leq r_i}$ for all $i\geq 1$.
    \item $\cone(\alpha_i)\simeq \cone(\beta_i)[-1]\in\langle\G\rangle^{(-\infty,r_i-1]}$ for all $i\geq 0$. 
\end{itemize}
Since $r_i$ is descending,  $E_1\ra E_2\ra \cdots$ is a Cauchy sequence in $\T^c$. By Lemma \ref{lem-desc-hocolim}, one has $\hocolim X^{\leq r_i}=0$. Consequently $\hocolim E_i=\hocolim X=X$. Hence $X\in\precompletionc{\T^c}{\M^{\G}}$. 

For each $X\in\T_c^b=\T_c^-\cap\T^+$, there is an integer $n$ such that $X\in \T_{\G}^{\geq n}$. This implies that \(\Hom\big(\left\langle \G\right\rangle^{(-\infty,n-1]},X\big)=0\). Together with $X\in\T_c^-=\precompletionc{\T^c}{\M^{\G}}$, we deduce that  $X\in\completionc{\T^c}{\M^{\G}}$. Conversely, for each $X\in\completionc{\T^c}{\M^{\G}}$, we have $X\in\T_c^-$ since $\completionc{\T^c}{\M^{\G}}\subseteq \precompletionc{\T^c}{\M^{\G}}=\T_c^-$. By the definition of $\completionc{\T^c}{\M^{\G}}$, there is an   integer $D$ such that $\Hom(\left\langle \G\right\rangle^{(-\infty,-D]},X)=0$. Particularly $\Hom(\G[i], X)=0$ for all $i\geq D$. Hence $X\in \T^+$ and consequently $X\in\T_c^-\cap\T^+=\T_c^b$.
\end{proof}

One may ask whether $\T_c^-$ in Proposition \ref{prop-silt-completion} is a triangulated subcategory of $\T$. Actually, the proof of \cite[Lemma 2.9]{Neeman2026aa} still works in this case, that is, the subcategory $\T_c^-$ is a triangulated subcategory of $\T$. The objects in $\T_c^-$ has a crucial property as shown in the following lemma, which will be used later on.
 
\begin{Lem}\label{lem-tcf-coprod}
Keep the notations in Proposition \ref{prop-silt-completion}. For an object $X\in\T_c^-$ and a family of objects $\{Y_i\}_{i\in I}$ in $\T_{\G}^{\geq l}$ for some fixed $l$, the canonical morphism
\[\coprod_{i\in I}\Hom(X,Y_i)\lra \Hom\left(X,\coprod_{i\in I}Y_i\right)\]
is an isomorphism. 
\end{Lem}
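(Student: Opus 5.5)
The plan is to approximate $X$ by a compact object and use compactness on that piece, while killing the error term by orthogonality. By Proposition \ref{prop-silt-completion}, $X\in\T_c^-=\bigcap_n(\T^c*\T_{\G}^{\leq n})$. Choose $n=l-2$ and take a triangle
\[E\lraf{f} X\lraf{g} X'\lra E[1]\]
with $E\in\T^c$ and $X'\in\T_{\G}^{\leq l-2}$. Then $X'$ and $X'[-1]\in\T_{\G}^{\leq l-1}$ are both left orthogonal to $\T_{\G}^{\geq l}$. This holds for every $Y_i$ and also for $\coprod_i Y_i$, because $\T_{\G}^{\geq l}=(\G[\mathbb{Z}_{>-l}])^{\perp}$ is closed under coproducts, $\G$ consisting of compact objects. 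More precisely, $\Hom(X'[k],Z)=0$ for all $k\geq -1$ and all $Z\in\T_{\G}^{\geq l}$, by the axiom $\Hom(t^{\leq 0},t^{\geq 1})=0$ after shifting.

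Next, apply $\Hom(-,Z)$ to the triangle, for $Z=Y_i$ and $Z=\coprod Y_i$. This gives the exact sequences
\[\Hom(X',Z)\lra\Hom(X,Z)\lra\Hom(E,Z)\lra\Hom(X'[-1],Z).\]
The two outer terms vanish, so $\Hom(f,Z)\colon\Hom(X,Z)\to\Hom(E,Z)$ is an isomorphism, naturally in $Z$. Taking the coproduct over $i$ of the isomorphisms for $Z=Y_i$, we get a commutative square. Its top row is the canonical map $\coprod\Hom(X,Y_i)\to\Hom(X,\coprod Y_i)$ and its bottom row is the canonical map $\coprod\Hom(E,Y_i)\to\Hom(E,\coprod Y_i)$. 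The vertical maps are induced by $f$, and the square commutes by naturality of the canonical map. The bottom row is an isomorphism because $E$ is compact, and both verticals are isomorphisms, so the top row is an isomorphism.

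The main point needing care is the closure of $\T_{\G}^{\geq l}$ under coproducts and the correct choice of truncation degree. The shift $X'[-1]$ must still be orthogonal to $\T_{\G}^{\geq l}$, which is why I take $n=l-2$ rather than $n=l-1$; any $n\leq l-2$ works. Coproduct closure follows since $\Hom(G[m],\coprod Y_i)\simeq\coprod\Hom(G[m],Y_i)=0$ for compact $G\in\G$ and the relevant $m$. Everything else is formal.
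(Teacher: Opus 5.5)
Your proof is correct and is essentially the paper's argument. Both take the triangle from the definition of $\T_c^-$ with a compact piece and a sufficiently negative aisle piece, kill the outer Hom-terms by orthogonality, and transfer compactness along the resulting natural isomorphism. Your truncation degree $l-2$ is in fact the right one. The paper takes $Z\in\T_{\G}^{\leq l-1}$ and asserts $\Hom(Z[-1],\coprod_i Y_i)=0$, which fails in general because $Z[-1]\in\T_{\G}^{\leq l}$. Your version, with its explicit naturality square and coproduct-closure check, fixes that off-by-one slip.
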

\begin{proof}
By definition, there is a triangle $Z[-1]\lra C\lra X\lra Z$ with $Z\in\T_{\G}^{\leq l-1}$ and $C\in\T^c$. The lemma follows easily by applying $\Hom(-,\coprod_{i\in I}Y_i)$ to the triangle, since $\coprod_{i\in I}Y_i\in\T_{\G}^{\geq l}$ and thus $\Hom(Z[m], \coprod_{i\in I}Y_i)=0$ for $m=0,-1$. 
\end{proof}

If $\T=\D[\Modcat{R}]$ is the derived category of a ring $R$ and $\G=\add(R)$, then $\T_c^{-}=\Kf[\projcat{R}]$ and $\T_c^b=\Kfb[\projcat{R}]$. If $R$ is derived equivalent to a right coherent ring, then it follows from a result \cite[Corollary 4.3]{Marks2023} (see also \cite[Proposition 7.6]{Saorin2022}) that the $t$-structure $(\T_{\G}^{\leq 0},\T_{\G}^{\geq 0})$ restricts to $\T_c^b=\Kfb[\projcat{R}]$ if and only if $R$ is also right coherent. We have the following general quesiton. 

\medskip 
\noindent{\bf Question.} {\em In the setting of Proposition \ref{prop-silt-completion}, when does the $t$-structure restrct to $\T_c^-$ or $\T_c^b$}\,?

\medskip 
Our answer to this question is Theorem \ref{theo-main-intro}. Particularly, the $t$-structure in Proposition \ref{prop-silt-completion} restricts to a $t$-structure on $\T_c^-$ if and only if $\G$ is contravariantly finite in $\T_c^-$. The rest of this section is devoted to giving a proof of Theorem \ref{theo-main-intro}. For simplicity, we keep the notations in Proposition \ref{prop-silt-completion} for the rest of this section.

\begin{Lem}
\label{Lem-contr-in-negpart}
Every object in $\T_c^{-}\cap\T_{\G}^{\leq 0}$ admits a right $\G$-approximation.  
\end{Lem}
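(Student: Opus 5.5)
Take $X\in\T_c^-\cap\T_{\G}^{\leq 0}$. Since $X\in\T_c^-\subseteq\T^c*\T_{\G}^{\leq -1}$, there is a triangle
\[C\lraf{f} X\lra Z\lra C[1]\]
with $C\in\T^c$ and $Z\in\T_{\G}^{\leq -1}$. We will approximate $C$ by $\G$ inside the small category $\T^c$, using the bounded co-$t$-structure $(\langle\G\rangle^{[0,\infty)},\langle\G\rangle^{(-\infty,0]})$ on $\T^c$ from Lemma~\ref{lem-facts-t-cot}(2), and then push this along $f$.

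\textbf{Step 1 (reduce to $C$).} For each $G\in\G$, apply $\Hom(G,-)$ to the triangle. Since $Z[-1]\in\T_{\G}^{\leq 0}$ and $Z\in\T_{\G}^{\leq -1}$, and $\T_{\G}^{\leq n}=(\G[\mathbb{Z}_{<-n}])^{\perp}$, we have $\Hom(G,Z[-1])=0$ (use $m=1$ with $n=-1$). Hence $\Hom(G,f)\colon\Hom(G,C)\to\Hom(G,X)$ is surjective. Therefore any right $\G$-approximation $G_0\to C$ composed with $f$ gives a right $\G$-approximation of $X$. This is because any $G\to X$ lifts to $C$ and then factors through $G_0$.

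\textbf{Step 2 (approximate $C$ in $\T^c$).} Using the co-$t$-structure on $\T^c$, take a triangle
\[A\lra C\lra B\lra A[1]\]
with $A\in\langle\G\rangle^{[0,\infty)}$ and $B\in\langle\G\rangle^{(-\infty,0]}[1]=\langle\G\rangle^{(-\infty,-1]}$. Then $\Hom(\G,B)=0$ by the orthogonality axiom (2) of the co-$t$-structure, so $\Hom(G,A)\to\Hom(G,C)$ is surjective. It remains to approximate $A$.

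We may shift the truncation so that $A$ also lies in $\langle\G\rangle^{(-\infty,0]}$, i.e.\ $A\in\G$. Then $A$ itself is the approximation. Concretely, $C$ lies in some finite range $\langle\G\rangle^{[a,b]}$. The objects of $\langle\G\rangle^{[0,\infty)}$ relevant to maps from $\G$ are controlled by their $\G$-"top". Take a further co-$t$-truncation of $A$ with respect to $(\langle\G\rangle^{[1,\infty)},\langle\G\rangle^{(-\infty,1]})$:
\[A'\lra A\lra A''\lra A'[1]\]
with $A'\in\langle\G\rangle^{[1,\infty)}$ and $A''\in\langle\G\rangle^{[0,0]}=\G$.

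\textbf{Step 3 (the main obstacle).} We must show that $\Hom(G,A)\to\Hom(G,A'')$ is surjective, or better, find a map from $\G$ onto $A$. The naive direction fails, since $A''$ is the quotient, not the sub-object. This is the main obstacle.

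What I would try first is the dual ordering of weight truncations. The correct maps are obtained by writing $A\in\langle\G\rangle^{[0,N]}$ as an iterated extension $G_0*G_1[1]*\cdots$ in the order in which $\G$ appears as a sub-object, i.e.\ $A\in\G*\langle\G\rangle^{[1,N]}$. This gives a triangle
\[G_0\lra A\lra A_1\lra G_0[1]\]
with $A_1\in\langle\G\rangle^{[1,N]}\subseteq\langle\G\rangle^{(-\infty,-1]}$. Such an ordering exists by the octahedral axiom and the Hom-vanishing $\Hom(\G[i],\G[j][1])=0$ for the appropriate signs: it is the "stupid truncation" decomposition, as for complexes of projectives. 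With this triangle, $\Hom(\G,A_1)=0$ by the presilting property, so $G_0\to A$ is a right $\G$-approximation.

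Composing $G_0\to A\to C\lraf{f}X$ then gives the desired right $\G$-approximation of $X$. Verifying that the extension can be ordered with $\G$ on the left is the step requiring care.
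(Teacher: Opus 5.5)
\textbf{There is a genuine gap, and it lies in Steps 2--3.} Your Step 1 is correct in substance. After it, however, you never use the hypothesis $X\in\T_{\G}^{\leq 0}$. Steps 2--3 try to produce a right $\G$-approximation of an \emph{arbitrary} compact object $C$, and that is false in general: contravariant finiteness of $\G$ in $\T^c$ is condition (i) of Theorem~\ref{theo-main-intro}, a genuine restriction.

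For a counterexample, let $R$ be a ring that is not right coherent and $\G=\add(R)$. Take $d\colon P^0\to P^1$ in $\projcat{R}$ with $\Ker d$ not finitely generated. The compact object $C=(P^0\lraf{d}P^1)$, in degrees $0,1$, has $\Hom(R,C)\simeq\Ker d$, so it admits no right $\add(R)$-approximation.

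The step that actually fails is the reordering in Step 3. An object $A\in\langle\G\rangle^{[0,N]}$ is built from $\G,\G[-1],\dots,\G[-N]$. The co-$t$-structure gives $A\in\langle\G\rangle^{[1,N]}*\G$, with the $\G$-piece as a \emph{quotient}; for complexes this is $\sigma^{\geq 1}A\to A\to P^0$. That piece cannot be moved to the left. The connecting morphism $A''\to A'[1]$ of your triangle $A'\to A\to A''\to A'[1]$ lives in groups built from $\Hom(\G,\G[-k])$, $k\geq 0$, which presilting does not kill; in the example it is essentially $d$.

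Your justification also has a sign error. $\langle\G\rangle^{[1,N]}$ is generated by $\G[-1],\dots,\G[-N]$, so it is not contained in $\langle\G\rangle^{(-\infty,-1]}$, which is generated by $\G[1],\G[2],\dots$. Hence $\Hom(\G,A_1)=0$ does not follow from presilting. Likewise, the claim that one ``may shift the truncation'' so that $A\in\G$ is unjustified as it stands.

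There is also a smaller slip in Step 1. Surjectivity of $\Hom(G,C)\to\Hom(G,X)$ needs $\Hom(G,Z)=0$, which holds because $Z\in\T_{\G}^{\leq -1}$. The vanishing you cite, $\Hom(G,Z[-1])=\Hom(G[1],Z)=0$, is not implied.

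\textbf{The missing idea} is to use $X\in\T_{\G}^{\leq 0}$ to put $C$ into the co-aisle. In the rotated triangle $Z[-1]\to C\lraf{f}X\to Z$, both $Z[-1]$ and $X$ lie in the extension-closed aisle $\T_{\G}^{\leq 0}$. Hence $C\in\T^c\cap\T_{\G}^{\leq 0}=\langle\G\rangle^{(-\infty,0]}$.

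For that equality, truncate $C$ with respect to the shifted co-$t$-structure $(\langle\G\rangle^{[1,\infty)},\langle\G\rangle^{(-\infty,1]})$. The first map vanishes because $\Hom(\G[k],C)=0$ for $k<0$. So $C$ is a summand of the third term, which lies in $\langle\G\rangle^{(-\infty,0]}$.

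Your Step 2 then finishes at once. In your triangle $A\to C\to B\to A[1]$ one has $A\in B[-1]*C\subseteq\langle\G\rangle^{(-\infty,0]}$. Therefore $A\in\langle\G\rangle^{[0,\infty)}\cap\langle\G\rangle^{(-\infty,0]}=\G$, and $A\to C\lraf{f}X$ is the required approximation.

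This is the paper's proof. There it is phrased as follows: $C$ is a summand of some $C'\in\G*\G[1]*\cdots*\G[N]$. The first map $G\to C'$ of a triangle $G\to C'\to V\to G[1]$, with $V\in\G[1]*\cdots*\G[N]$, is a right $\G$-approximation because $\Hom(\G,V)=0$ by presilting. That is exactly the ``$\G$ as sub-object'' ordering you wanted. It exists for objects of $\langle\G\rangle^{(-\infty,0]}$, not of $\langle\G\rangle^{[0,\infty)}$.
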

\begin{proof}
Let $X\in \T_c^-\cap\T_{\G}^{\leq 0}$. Since $X\in\T_c^-$, there is a triangle \[B[-1]\lra A\lraf{f} X\lra B\] with $B\in\T_{\G}^{\leq -1}$ and $A\in\T^c$. This gives an exact sequence in $\Modcat{\G}$
\[\Hom(-,A)|_{\G}\lraf{(-,f)}\Hom(-,X)|_{\G}\lra \Hom(-,B)|_{\G}=0,\]
showing that $(-,f)$ is an epimorphism. Now both $X$ and $B[-1]$ belong to $\T_{\G}^{\leq 0}$. It follows that $A\in\T_{\G}^{\leq 0}\cap \T^c$. Since $\G$ is a silting subcategory of $\T^c$, we have 
\[\T_{\G}^{\leq 0}\cap \T^c=\langle\G\rangle^{(-\infty,0]}=\bigcup_{n\geq 0} \add(\G*\G[1]*\cdots *\G[n]).\] 
Thus, there is a positive integer $N$ such that $A\in \add(\G[0]*\G[1]*\cdots*\G[N])$. Let $G\raf{g} A'\ra V\ra G[1]$ be a triangle such that $G\in \G$, $V\in \G[1]*\G[2]*\cdots*\G[N]$ and $A'$ has $A$ as a direct summand. This induces an exact sequence in $\Modcat{\G}$
\[\Hom(-,G)|_{\G}\lraf{(-,g)}\Hom(-,A')|_{\G}\lra \Hom(-,V)|_{\G}=0.\]
Hence $(-,g)$ is an epimorphism. Let $\pi: A'\ra A$ be the canonical projection. Then the composition 
 
\[(-,g)\circ (-,\pi)\circ (-,f)=(-,g\pi f): \Hom(-,G)|_{\G}\lra\Hom(-,X)|_{\G}\] is epic with $G\in \G$, that is, the morphism $g\pi f: G\ra X$ is a  right $\G$-approximation. 
\end{proof}

\begin{Prop}\label{prop-res2contr}
If the $t$-structure  $\left(\T^{\leq0}_{\G},\T^{\geq0}_{\G}\right)$ on $\T$ restricts to a $t$-structure on $\U=\T_c^-$ or $\T_c^b$, then every object in $\U$ admits a right $\G$-approximation.  
\end{Prop}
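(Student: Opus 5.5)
The plan is to reduce to Lemma \ref{Lem-contr-in-negpart} via the truncation triangle that exists inside $\U$ by hypothesis. Let $X\in\U$. Since the $t$-structure restricts to $\U$, the truncation triangle
\[\tau^{\leq 0}X\lraf{u} X\lra \tau^{\geq 1}X\lra (\tau^{\leq 0}X)[1]\]
formed in $\T$ has $\tau^{\leq 0}X\in\U\cap\T_{\G}^{\leq 0}$ and $\tau^{\geq 1}X\in \U\cap\T_{\G}^{\geq 1}$. Restriction means that $(\U\cap\T_{\G}^{\leq 0},\U\cap\T_{\G}^{\geq 0})$ is a $t$-structure on $\U$. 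By uniqueness of truncation triangles, the truncation triangle in $\U$ is isomorphic to the one in $\T$, so its terms lie in $\U$.

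First I will show that $u$ induces a bijection $\Hom(G,\tau^{\leq 0}X)\to\Hom(G,X)$ for every $G\in\G$. Since $G\in\G\subseteq\T_{\G}^{\leq 0}$, we have $\Hom(G,\tau^{\geq 1}X)=0$ and $\Hom(G,(\tau^{\geq 1}X)[-1])=\Hom(G[1],\tau^{\geq 1}X)=0$. This holds because $\T_{\G}^{\geq 1}=(\T_{\G}^{\leq 0})^{\perp}$ and $G[1]\in \T_{\G}^{\leq -1}\subseteq\T_{\G}^{\leq 0}$. Indeed, $\T_{\G}^{\geq 0}=(\G[\mathbb{Z}_+])^{\perp}$, so $\T_{\G}^{\geq 1}=(\G[n])_{n\geq 0}^{\perp}$, which directly contains the vanishing for $G$ and $G[1]$. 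The long exact Hom sequence then shows that $\Hom(-,u)|_{\G}$ is an isomorphism.

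Next I apply Lemma \ref{Lem-contr-in-negpart}. Since $\tau^{\leq 0}X\in\T_c^-\cap\T_{\G}^{\leq 0}$, there is a right $\G$-approximation $h:G_0\to\tau^{\leq 0}X$. Here I use $\T_c^b\subseteq\T_c^-$, which handles the case $\U=\T_c^b$ as well. Composing with $u$, every morphism $G\to X$ with $G\in\G$ factors as $u\circ a$ for some $a:G\to\tau^{\leq 0}X$. In turn, $a=h\circ b$ for some $b:G\to G_0$. Hence $uh:G_0\to X$ is a right $\G$-approximation of $X$.

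The only point needing care is the identification of the restricted truncation with the ambient one, i.e.\ that $\tau^{\leq 0}X$ lands in $\T_c^-$. This is exactly what "restricts" provides. Everything else is formal.
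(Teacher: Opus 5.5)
Your proposal is correct and follows essentially the same route as the paper's proof. Both take the truncation triangle $\tau^{\leq 0}X\to X\to\tau^{\geq 1}X\to$ inside $\U$ and note that $\Hom(-,\tau^{\leq 0}X)|_{\G}\to\Hom(-,X)|_{\G}$ is onto (you show it is even bijective, though surjectivity is all that is needed). They then apply Lemma \ref{Lem-contr-in-negpart} to $\tau^{\leq 0}X\in\T_c^-\cap\T_{\G}^{\leq 0}$ and compose the two maps to get a right $\G$-approximation of $X$.
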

\begin{proof}
Since the $t$-structure  $\left(\T^{\leq0}_{\G},\T^{\geq0}_{\G}\right)$ on $\T$ restricts to $\U$, for each object $X\in\U$, there is a triangle
\[A\lraf{f} X\lra B\lra A[1]\]
in $\U$ such that $A\in\U\cap\T_{\G}^{\leq 0}$ and $B\in\U\cap\T_{\G}^{\geq 1}$. The triangle induces an exact sequence in $\Modcat{\G}$
\[\Hom(-,A)|_{\G}\lraf{(-,f)}\Hom(-,X)|_{\G}\lra \Hom(-,B)|_{\G}=0,\]
showing that $(-,f)$ is an epimorphism. Since $A\in\U\cap\T_{\G}^{\leq 0}$, by Lemma \ref{Lem-contr-in-negpart}, $A$ admits a right $\G$-approximation $g: G\ra A$ with $G\in \G$. Then the composition $fg: G\ra X$ is a right $\G$-approximation of $X$. 
\end{proof}

The following lemma is well-known, for the convenience of the reader, we provide a proof. 
\begin{Lem}\label{lem-stack-approximation}
Let $\T$ be a triangulated category and let $\A,\mathscr{C}$ be full subcategories of $\T$. Given a commutative diagram 
	\[\xymatrix{
		A\ar[r]^{f}\ar@{=}[d]&B\ar[r]^{s}\ar[d]&D\ar[r]\ar[d]&A[1]\ar@{=}[d]\\
		A\ar[r]&C\ar[r]\ar[d]^{g}&E\ar[r]\ar[d]^{h}&A[1]\\
		&F\ar@{=}[r]\ar[d]&F\ar[d]^{u}&\\
		&B[1]\ar[r]&D[1]&
	}\]
	in $\T$ such that all rows and columns are triangles in $\T$. If $f:A\to B$ is a right $\A$-approximation and  $g:C\to F$ is a right $\mathscr{C}$-approximation, then $h:E\to F$ is a right $\mathscr{C}*\left(\A[1]\right)$-approximation. Particularly, suppose that $\mathscr{V}$ is a full subcategory of $\T$ closed under taking cones of morphisms in $\mathscr{V}$ and that every object in $\mathscr{V}$  admits a right $\A$-approximation and a right $\mathscr{C}$-approximation. Then every object in $\mathscr{V}$ admits a $(\mathscr{C}*\A)$-approximation. 
\end{Lem}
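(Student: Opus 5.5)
The plan is to prove the approximation property of $h$ directly, by diagram chasing through the given octahedral diagram. First, $h$ has the right source: since $f$ is a right $\A$-approximation we have $A\in\A$, and since $g$ is a right $\mathscr{C}$-approximation we have $C\in\mathscr{C}$. The second row $A\to C\to E\to A[1]$ then shows $E\in\mathscr{C}*(\A[1])$. Name the remaining maps as follows: $c\colon C\to E$ is the map in the second row, $v\colon F\to B[1]$ is the connecting map of the column $B\to C\to F\to B[1]$, and $u\colon F\to D[1]$ is the connecting map of the column $D\to E\to F\to D[1]$. Commutativity of the diagram gives $hc=g$ and $u=s[1]\circ v$, and since consecutive maps in a triangle compose to zero, $vg=0$ and $uh=0$.

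Next comes the factorization. Let $W\in\mathscr{C}*(\A[1])$ with a triangle $W_1\xrightarrow{w}W\xrightarrow{p}A'[1]\to W_1[1]$, where $W_1\in\mathscr{C}$ and $A'\in\A$, and let $\varphi\colon W\to F$ be any morphism. Since $h$ sits in a triangle with $u$, the map $\varphi$ factors through $h$ if and only if $u\varphi=0$. Because $u=s[1]v$, it suffices to show that $s[1]v\varphi=0$. I do this in three steps.
\begin{enumerate}
\item Since $g$ is a right $\mathscr{C}$-approximation, $\varphi w=ga$ for some $a\colon W_1\to C$. Hence $v\varphi w=vga=0$.
\item Therefore $v\varphi$ factors through $p$, say $v\varphi=\gamma p$ with $\gamma\colon A'[1]\to B[1]$.
\item Since $f$ is a right $\A$-approximation and $A'\in\A$, we can write $\gamma[-1]=f\delta$ with $\delta\colon A'\to A$.
\end{enumerate}
Then $u\varphi=s[1]f[1]\delta[1]p=(sf)[1]\delta[1]p=0$, since $sf=0$. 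So $\varphi$ factors through $h$.

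For the particular statement, take $X\in\mathscr{V}$ and a right $\mathscr{C}$-approximation $g\colon C\to X$. The object $\cone(g)$ lies in $\mathscr{V}$ (possibly $C\notin\mathscr{V}$, but I expect $\mathscr{C}\subseteq\mathscr{V}$ is implicit in the intended applications; otherwise I read the hypothesis as closure under cones of arbitrary morphisms with target in $\mathscr{V}$). Choose a right $\A$-approximation $f'\colon A\to\cone(g)$, and set $B=\cone(g)[-1]$ and $f=f'[-1]\colon A[-1]\to B$. This $f$ is a right $\A[-1]$-approximation. Completing $A[-1]\to B\to C$ by the octahedral axiom gives exactly the diagram of the lemma, with $F=X$ and with $\A$ replaced by $\A[-1]$. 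By the first part, $h\colon E\to X$ is a right $\mathscr{C}*(\A[-1][1])=(\mathscr{C}*\A)$-approximation.

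The main obstacle is the bookkeeping in the second paragraph: identifying that the obstruction to factoring through $h$ is $u=s[1]v$, and checking that the commutativity of the bottom square gives exactly this identity, with the sign conventions of the octahedron, rather than a composite that does not vanish.
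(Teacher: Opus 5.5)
Your proof of the main claim is correct and is essentially the paper's argument: the paper also lifts $\varphi w$ through $g$, obtains a map $A'\to B$ by completing to a morphism of triangles (equivalent to your Hom-exactness step), factors it through $f$, and concludes $u\varphi=(sf)[1]\circ(\cdots)=0$. The paper gives no separate argument for the ``Particularly'' clause; your octahedral reduction (approximating $\cone(g)$ by $\A$, then shifting to get a right $\A[-1]$-approximation of $\cone(g)[-1]$) handles it. Your caveat that this needs $\cone(g)\in\mathscr{V}$, e.g.\ $\mathscr{C}\subseteq\mathscr{V}$, is a fair reading of the stated hypothesis rather than a gap in your argument.
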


\begin{proof}
	Given $Z\in\mathscr{C}*(\A[1])$, we shall show that for any morphism $r:Z\to F$, $r$ factorizes through $h:E\to F$. By definition, there is a triangle
	\[A'\lraf{\alpha}C'\lraf{\beta}Z\lraf{\gamma} A'[1]\]
	with $C'\in\mathscr{C}$ and $A'\in\A$. Since $g:C\to F$ is a right $\mathscr{C}$-approximation, there exists $q:C'\to C$ such that $r\beta=gq$. Hence there is a commutative diagram 
	\[
	\xymatrix{
		A'\ar[r]^{\alpha}\ar[d]^{p}&C'\ar[r]^{\beta}\ar[d]^{q}&Z\ar[r]^{\gamma}\ar[d]^{r}&A'[1]\ar[d]^{p[1]}\\
		B\ar[d]^s\ar[r]&C\ar[d]\ar[r]^g&F\ar@{=}[d]\ar[r]&B[1]\ar[d]^{s[1]}\\
		D\ar[r]&E\ar[r]^h&F\ar[r]^u&D[1]
	}
	\]
	in $\T$ such that all rows are triangles. 
	Note that $A'\in\A$ and $f:A\to B$ is a right $\A$-approximation, there is a morphism $\mu:A'\to A$ such that $p=f\mu$. Hence $sp=sf\mu=0$ and thus $ur=(sp)[1]\circ \gamma=0$. It follows that $r$ factorizes through $h$. 
	%If both $\A$ and $\mathscr{C}$ are contravariantly finite in $\T$, then so are $\mathscr{C}[n]$ and $\A[m-1]$. The discussion above shows that every object in $\T$ admits a right $(\mathscr{C}[n])*(\A[m])$-approximation, that is, $(\mathscr{C}[n])*(\A[m])$ is contravariantly finite in $\T$. 
\end{proof}

\begin{Prop}\label{prop-slice-and-stack-silting}
Suppose that $\G$ is contravariantly finite in $\T_c^-$ and that $X\in \T_c^-\cap \T_{\G}^{\leq n}$. Then there are families of triangles 
\[\slicesilting(X)=\left\{P_i[-i]\lraf{p_i}X^{\leq i}\lraf{\pi_i} X^{\leq i-1}\lra P_i[-i+1]\mid i\leq n\right\},\]
	\[\stacksilting(X)=\left\{Q_{i,j}\lraf{q_{i,j}} X^{\leq i}\lraf{c_{i,j}} X^{\leq j}\lra Q_{i,j}[1]\mid j< i\leq n\right\}\]
such that 
	\begin{itemize}
		\item[$(1)$] $X^{\leq n}=X$ and   $P_i[-i]\lraf{p_i}X^{\leq i}$ is a right $\G[-i]$-approximation for all $i\leq n$; 
		\item [$(2)$]  $X^{\leq i}\in  \T_c^-\cap \T_\G^{\leq i}$ for all $i\leq n$;
		\item [$(3)$] $c_{i,j}=\pi_{j+1}\pi_{j+2}\cdots \pi_i$  and  $Q_{i,j}\lraf{q_{i,j}}X^{\leq i}$ is a right $\left(\G[-i]*\G[-i+1]*\cdots*\G[-j-1]\right)$-approximation for all $j< i\leq n$.
	\end{itemize}
\end{Prop}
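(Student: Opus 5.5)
We construct $\slicesilting(X)$ by downward induction on $i\leq n$, and then obtain $\stacksilting(X)$ from the slices by repeated use of the octahedral axiom together with Lemma \ref{lem-stack-approximation}.

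\emph{Construction of the slices.} Put $X^{\leq n}=X$. Suppose $X^{\leq i}\in\T_c^-\cap\T_{\G}^{\leq i}$ has been constructed. Because $\G$ is contravariantly finite in $\T_c^-$, so is $\G[-i]$; concretely, a right $\G$-approximation of $X^{\leq i}[i]$, shifted by $[-i]$, gives a right $\G[-i]$-approximation $p_i:P_i[-i]\to X^{\leq i}$. (Alternatively, Lemma \ref{Lem-contr-in-negpart} applied to $X^{\leq i}[i]$ already provides it.) Complete $p_i$ to a triangle $P_i[-i]\to X^{\leq i}\to X^{\leq i-1}\to P_i[-i+1]$.

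Next I check that $X^{\leq i-1}$ lies in $\T_c^-\cap\T_{\G}^{\leq i-1}$. It lies in $\T_c^-$ because $\T_c^-$ is a triangulated subcategory of $\T$ (the analogue of \cite[Lemma 2.9]{Neeman2026aa} mentioned above) and $P_i[-i]\in\T^c\subseteq\T_c^-$. For the $t$-structure condition recall that $\T_{\G}^{\leq i-1}=(\G[m])^{\perp}$ for $m<-(i-1)$, i.e.\ it is cut out by the vanishing of $\Hom(G[m],-)$ for $m\leq -i$. Apply $\Hom(G[m],-)$ to the triangle.
\begin{itemize}
\item For $m<-i$, both $\Hom(G[m],X^{\leq i})$ and $\Hom(G[m-1],P_i[-i])$ vanish. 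The second vanishes because $\Hom(G,P_i[-i-m+1])=0$, as $-i-m+1>0$ and $\G$ is presilting. Hence $\Hom(G[m],X^{\leq i-1})=0$.
\item For $m=-i$, we get $\Hom(G[-i],P_i[-i])\to\Hom(G[-i],X^{\leq i})$ surjective, since $p_i$ is an approximation. We also have $\Hom(G[-i-1],P_i[-i])=\Hom(G,P_i[1])=0$. Exactness then gives $\Hom(G[-i],X^{\leq i-1})=0$.
\end{itemize}
This proves (1) and (2).

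\emph{Construction of the stacks.} Define $c_{i,j}=\pi_{j+1}\cdots\pi_i$ and let $Q_{i,j}\to X^{\leq i}$ be the cocone of $c_{i,j}$. I argue by induction on $i-j$. For $i-j=1$ we take $Q_{i,i-1}=P_i[-i]$ and $q_{i,i-1}=p_i$.

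For the inductive step write $c_{i,j}=\pi_{j+1}\circ c_{i,j+1}$. The octahedral axiom, applied to this composition, gives a diagram of exactly the shape in Lemma \ref{lem-stack-approximation}, with:
\begin{itemize}
\item $A=Q_{i,j+1}$ and $f=q_{i,j+1}$;
\item $B=X^{\leq i}$, $D=X^{\leq j+1}$, $F=X^{\leq j}$;
\item $C=X^{\leq j+1}$ placed appropriately, after rotating so that the relevant approximations line up.
\end{itemize}
I expect the main obstacle to be matching the lemma's diagram exactly: which row carries which approximation, and where the shifts go.

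The formulation I would use is the following. The octahedron yields a triangle $Q_{i,j+1}\to Q_{i,j}\to P_{j+1}[-j-1]\to Q_{i,j+1}[1]$ compatible with the maps to $X^{\leq i}$. The claim is that $q_{i,j}$ is a right $\G[-i]*\cdots*\G[-j-1]$-approximation. I would verify this directly, imitating the proof of Lemma \ref{lem-stack-approximation}. Take $Z\in(\G[-i]*\cdots*\G[-j-2])*\G[-j-1]$, so that $Z$ sits in a triangle $Z'\to Z\to W\to Z'[1]$ with $Z'$ in the first factor and $W\in\G[-j-1]$, and take a map $r:Z\to X^{\leq i}$.
\begin{itemize}
\item First, the composite $c_{i,j+1}r:Z\to X^{\leq j+1}$ restricted to $Z'$ vanishes. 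This is because $X^{\leq j+1}\in\T_{\G}^{\leq j+1}$ while $Z'$ is built from $\G[-k]$ with $k\geq j+2$.
\item Hence $c_{i,j+1}r$ factors through $W$, and therefore through $p_{j+1}$ by the approximation property.
\item Finally, $c_{i,j}r=\pi_{j+1}c_{i,j+1}r$ factors through $\pi_{j+1}p_{j+1}=0$, so $c_{i,j}r=0$ and $r$ factors through $q_{i,j}$.
\end{itemize}
Note that this simplifies matters: factoring through $q_{i,j}$ only requires $c_{i,j}r=0$, which the vanishing above provides, so the inductive hypothesis is used only to keep track of the objects. It remains to confirm that $Q_{i,j}$ lies in the stated extension class, which follows from the octahedral triangle and the associativity of $*$. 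This gives (3).
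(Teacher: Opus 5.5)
Your proof is correct. The slices and items (1)--(2) are handled exactly as in the paper; the difference lies in (3).

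\begin{itemize}
\item \textbf{The paper's route.} It fixes $j$ and inducts upward in $i$. The octahedron for $c_{k+1,j}=c_{k,j}\pi_{k+1}$ adjoins the new factor $\G[-k-1]$ on the left. The approximation property of $q_{k+1,j}$ then comes from Lemma~\ref{lem-stack-approximation}, which combines the approximation properties of $q_{k,j}[-1]$ and $p_{k+1}$. This argument is purely formal and uses no orthogonality.
\item \textbf{Your route.} You peel off the factor $\G[-j-1]$ on the right via $c_{i,j}=\pi_{j+1}c_{i,j+1}$. You use the octahedron only to get $Q_{i,j}\in\{Q_{i,j+1}\}*\{P_{j+1}[-j-1]\}$. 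The factorization property is then checked directly from the vanishing supplied by (2) together with the approximation property of $p_{j+1}$, so the inductive hypothesis is needed only for membership in the extension class.
\end{itemize}

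What your route buys is that it exposes the approximation half of (3) as a consequence of (2). It can even be shortened further. Since $X^{\leq j}\in\T_{\G}^{\leq j}$, one has $\Hom(Z,X^{\leq j})=0$ for every $Z\in\G[-i]*\cdots*\G[-j-1]$. Hence $c_{i,j}r=0$ for every $r:Z\to X^{\leq i}$ without invoking $p_{j+1}$ at all. The only real content of (3) is then that $Q_{i,j}$ lies in the extension class.

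Two side remarks.
\begin{itemize}
\item The dictionary with Lemma~\ref{lem-stack-approximation} that you sketched and then abandoned ($A=Q_{i,j+1}$, $B=X^{\leq i}$, $C=D=X^{\leq j+1}$) is not a valid instance of that lemma. For your octahedron the correct instance is $A=P_{j+1}[-j-2]$, $f=p_{j+1}[-1]$, $B=X^{\leq j+1}[-1]$, $C=Q_{i,j+1}$, $g=q_{i,j+1}$, $D=X^{\leq j}[-1]$, $E=Q_{i,j}$, $F=X^{\leq i}$. Since you replace it by a direct argument, nothing is lost.
\item Your parenthetical observation is right. Since $X^{\leq i}[i]\in\T_c^-\cap\T_{\G}^{\leq 0}$, Lemma~\ref{Lem-contr-in-negpart} already supplies $p_i$. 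So the construction, and hence the proposition, goes through without assuming that $\G$ is contravariantly finite in $\T_c^-$.
\end{itemize}
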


\begin{proof}
Set $X^{\leq n}=X$. Since $\G$ is contravariantly finite in $\T_c^-$, $\G[m]$ is contravariantly finite in $\T_c^-[m]=\T_c^-$ for all integers $m$.
Inductively, for each $i\leq n$, take a right $\G[-i]$-approximation $P_{i}[-i]\lraf{p_{i}}X^{\leq i}$ and complete the morphism into a triangle
	\[P_{i}[-i]\lraf{p_{i}}X^{\leq i}\lraf{\pi_{i}} X^{\leq i-1}\lra P_i[-i+1]\quad (\dagger).\]
This gives rise to a family of triangles 
	\[\slicesilting(X)=\{P_i[-i]\lraf{p_i}X^{\leq i}\lraf{\pi_i} X^{\leq i-1}\lra P_i[-i+1]\mid i\leq n\}\]
satisfying $(1)$. Now we prove that 
$\slicesilting(X)$ satisfies $(2)$. 
Since $X^{\leq n}=X\in\T_c^-\cap\T_{\G}^{\leq n}$, for each $i\leq n$, we can inductively assume that $X^{\leq i}\in\T_c^-\cap\T_{\G}^{\leq i}$. 
For each $G\in\G$ and $k\in\mathbb{Z}$, applying $\Hom(G[k], -)$ to $(\dagger)$ results in an exact sequence
	\[\Hom(G[k],P_{i}[-i])\lraf{(G[k],p_{i})}\Hom(G[k],X^{\leq i})\lra\Hom(G[k],X^{\leq i-1})\lra\Hom(G[k],P_i[-i+1]).\]
If $k<-i$, then  $\Hom(G[k],X^{\leq i})=0=\Hom(G[k],P_i[-i+1])$. Thus $\Hom(G[k],X^{\leq i-1})$ vanishes. If $k=-i$, then $(G[-i],p_i)$ is surjective and $\Hom(G[-i],P_i[-i+1])=0$. This implies that $\Hom(G[-i],X^{\leq i-1})=0$. Hence $X^{\leq i-1}\in \T_{\G}^{\leq i-1}$. Note that $\G[k]\subseteq \T^c\subseteq\T_c^-$ for all $k\in\mathbb{Z}$. It follows that $X^{\leq i-1}\in\T_c^-$. We thus conclude that $\slicesilting(X)$ satisfies $(2)$. 

Finally we construct $\stacksilting(X)$. For each $j<n$, we proceed by induction on $i$. If $i=j+1$, then, setting $Q_{j+1,j}=P_{j+1}[-j-1]$ and $q_{j+1,j}=p_{j+1}$, the triangle 
 \[Q_{j+1,j}\lraf{q_{j+1,j}}X^{\leq j+1}\lraf{\pi_{j+1}}X^{\leq j}\lra Q_{j+1,j}[1]\]
is in $\slicesilting(X)$ and $q_{j+1,j}$ is a right $\G[-j-1]$-approximation.  Suppose the following triangles 
	\[Q_{i,j}\lraf{q_{i,j}}X^{\leq i}\lraf{c_{i,j}}X^{\leq j}\lra Q_{i,j}[1],\quad j<i\leq k<n\]
	have been constructed such  that  $q_{i,j}$ is a right $\left(\G[-i]*\G[-i+1]*\cdots*\G[-j-1]\right)$-approximation and that $c_{i,j}=\pi_{j+1}\pi_{j+2}\cdots\pi_i$ for all $j<i\leq k$. The Octahedral Axiom gives the following commutative diagram
	\[\xymatrix{
		Q_{k,j}[-1]\ar[r]^{q_{k,j}[-1]}\ar@{=}[d]&X^{\leq k}[-1]\ar[r]^{c_{k,j}[-1]}\ar[d]&X^{\leq j}[-1]\ar[r]\ar[d]&Q_{k,j}\ar@{=}[d]\\
		Q_{k,j}[-1]\ar[r]&P_{k+1}[-k-1]\ar[r]\ar[d]^{p_{k+1}}&Q_{k+1,j}\ar[r]\ar[d]^{q_{k+1,j}}&Q_{k,j}\\	
		&X^{\leq k+1}\ar@{=}[r]\ar[d]^{\pi_{k+1}}&X^{\leq k+1}\ar[d]^{c_{k+1,j}}&\\
		&X^{\leq k}\ar[r]^{c_{k,j}}&X^{\leq j}&
	}\]
such that all rows and columns are triangles.  Since \[Q_{k,j}[-1]\lraf{q_{k,j}[-1]}X^{\leq k}[-1]\] is a right $\left(\G[-k-1]*\G[-k]*\cdots*\G[-j-2]\right)$-approximation, and  \[P_{k+1}[-k-1]\lraf{p_{k+1}}X^{\leq k+1}\] is a right $\G[-k-1]$-approximation, by Lemma \ref{Lem-contr-in-negpart}, the morphism \[Q_{k+1,j}\lraf{q_{k+1,j}}X^{\leq k+1}\] is a right $\left(\G[-k-1]*\G[-k]*\cdots*\G[-j-1]\right)$-approximation. Moreover,  \[c_{k+1,j}=c_{k,j}\pi_{k+1}=\pi_{j+1}\cdots\pi_k\pi_{k+1}.\] Continuing this process, one can construct  a family of triangles
	\[\stacksilting(X)=\{Q_{i,j}\lraf{q_{i,j}} X^{\leq i}\lraf{c_{i,j}} X^{\leq j}\lra Q_i[1]\mid j< i\leq n\}\]
	satisfying condition $(3)$.
\end{proof}

\medskip 
With Proposition \ref{prop-slice-and-stack-silting}
, we can prove the following result. \begin{Prop}
\label{prop-res-co-t}
If $\G$ is contravariantly finite in $\T_c^-$, then $(\langle\G\rangle^{[0,+\infty)},\T_c^-\cap\T_{\G}^{\leq 0})$ is a  co-$t$-structure on $\T_c^-$ with co-heart $\G$.
\end{Prop}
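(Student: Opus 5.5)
We have to check the four axioms of Definition \ref{def-cot} for the pair $c_{\geq 0}=\langle\G\rangle^{[0,+\infty)}$, $c_{\leq 0}=\T_c^-\cap\T_{\G}^{\leq 0}$, both regarded as subcategories of $\T_c^-$. Note first that $\langle\G\rangle^{[0,+\infty)}\subseteq\T^c\subseteq\T_c^-$. We then identify the co-heart.

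\emph{Axioms (1), (2), (4).} The shift conditions are immediate. We have $\langle\G\rangle^{[0,+\infty)}[-1]\subseteq\langle\G\rangle^{[0,+\infty)}$. Also $\T_{\G}^{\leq 0}[1]\subseteq\T_{\G}^{\leq 0}$, and $\T_c^-$ is closed under shifts because it is a triangulated subcategory. Both classes are closed under direct summands. For $\langle\G\rangle^{[0,+\infty)}$ this holds by the $\add$ in its definition. For $\T_{\G}^{\leq 0}=(\G[\mathbb{Z}_-])^{\perp}$ it holds because this is a perp class. For $\T_c^-$ we must show it is closed under summands inside $\T$: a summand $Y$ of $X\in\T^c*\T_{\G}^{\leq n}$ lies in $\T^c*\T_{\G}^{\leq n-1}$. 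Instead of checking this directly, I would use the hocolim description from Proposition \ref{prop-silt-completion}, or simply note that $\T_c^-$ is thick by the Neeman argument cited there.

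For (2), we need $\Hom(\langle\G\rangle^{[0,+\infty)},\T_{\G}^{\leq 0}[1])=0$. Since $\T_{\G}^{\leq 0}[1]=\T_{\G}^{\leq -1}=(\G[m]\,:\,m>0)$-orthogonal, i.e. $\Hom(\G[k],Y)=0$ for $k\geq 1$, it suffices that objects of $\langle\G\rangle^{[0,+\infty)}$ are finite extensions of summands of $\G[k]$ with $k\geq 1$. Indeed $\langle\G\rangle_1^{[0,+\infty)}=\add(\G[-[0,+\infty)])$. With the convention $\A[-\Phi]$, the shifts $\G[k]$ occurring have $k\leq 0$, which is the wrong direction. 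So I must fix the sign convention carefully. By Lemma \ref{lem-facts-t-cot}(2), the pair $(\langle\G\rangle^{[0,\infty)},\langle\G\rangle^{(-\infty,0]})$ is a co-$t$-structure on $\T^c$, and $\langle\G\rangle^{(-\infty,0]}=\T^c\cap\T_{\G}^{\leq 0}$ by the proof of Lemma \ref{Lem-contr-in-negpart}. Hence $\langle\G\rangle^{[0,\infty)}$ is generated by $\G[k]$ with $k\leq 0$ relative to the part $\G[k]$, $k\geq 0$, which generates $\T_{\G}^{\leq 0}$. The vanishing then reduces to $\Hom(\G[-a],Y[1])=\Hom(\G[-a-1],Y)=0$ for $a\geq 0$ and $Y\in\T_{\G}^{\leq 0}$. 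This holds because $\Hom(\G[m],Y)=0$ for $m<0$, and it extends to finite extensions and summands.

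\emph{Axiom (3): the main step.} Given $X\in\T_c^-$, we need a triangle $A\to X\to B\to A[1]$ with $A\in\langle\G\rangle^{[0,+\infty)}$ and $B\in\T_c^-\cap\T_{\G}^{\leq -1}$. First reduce to $X\in\T_c^-\cap\T_{\G}^{\leq n}$: since $X\in\T^c*\T_{\G}^{\leq 0}$, there is a triangle $C\to X\to Z\to C[1]$ with $C$ compact and $Z\in\T_{\G}^{\leq 0}$. Then $Z\in\T_c^-$, and the compact part is handled by the co-$t$-structure on $\T^c$ together with an octahedral gluing. More simply, there is some $n$ with $X\in\T_{\G}^{\leq n}$, because $C\in\T_{\G}^{\leq n}$ for some $n$. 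Now apply Proposition \ref{prop-slice-and-stack-silting} with $i=n$, $j=-1$, which yields the triangle
\[Q_{n,-1}\lraf{q_{n,-1}}X\lraf{c_{n,-1}}X^{\leq -1}\lra Q_{n,-1}[1].\]
Here $X^{\leq -1}\in\T_c^-\cap\T_{\G}^{\leq -1}$ by Proposition \ref{prop-slice-and-stack-silting}(2), and $Q_{n,-1}$ is an iterated extension of the $P_i[-i]$ with $0\leq i\leq n$, so it lies in $\langle\G\rangle^{[0,+\infty)}$. (If $n<0$, take $A=0$.) This is the required triangle.

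\emph{Co-heart.} We have $\G\subseteq$ both classes. Conversely, if $X\in\langle\G\rangle^{[0,+\infty)}\cap\T_{\G}^{\leq 0}$, then $X\in\T^c\cap\T_{\G}^{\leq 0}=\langle\G\rangle^{(-\infty,0]}$. Hence $X$ lies in the co-heart of the bounded co-$t$-structure on $\T^c$, which is $\G$ by Lemma \ref{lem-facts-t-cot}(2).

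I expect the main obstacle to be the bookkeeping of shift conventions in (2) and in placing $Q_{n,-1}$ in the correct half. Closure of $\T_c^-$ under summands is a secondary point, which I would settle by citing thickness of $\T_c^-$.
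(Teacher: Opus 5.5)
Your proposal is correct and is essentially the paper's proof. The orthogonality comes from $\Hom(\G[-a],Y[1])=\Hom(\G[-a-1],Y)=0$ for $a\geq 0$ and $Y\in\T_{\G}^{\leq 0}$. The decomposition triangle $Q_{n,-1}\to X\to X^{\leq -1}\to Q_{n,-1}[1]$ comes from Proposition \ref{prop-slice-and-stack-silting}, with $Q_{n,-1}\in\G[-n]*\cdots*\G\subseteq\langle\G\rangle^{[0,+\infty)}$.

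Two minor points:
\begin{enumerate}
\item Your first description of $\T_{\G}^{\leq -1}$ as the objects $Y$ with $\Hom(\G[k],Y)=0$ for $k\geq 1$ has the wrong sign. It should be $k\leq 0$, which is what your corrected computation actually uses.
\item You do not need thickness of $\T_c^-$; the paper only records that it is triangulated. Axiom (4) for a co-$t$-structure on $\T_c^-$ only concerns summands that already lie in $\T_c^-$, and closure there follows from $\T_{\G}^{\leq 0}$ being a perp class.
\end{enumerate}
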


\begin{proof}
It is clear that $\left\langle \G\right\rangle^{[0,\infty)}[-1]\subset\left\langle \G\right\rangle^{[0,\infty)}$ and  $\left(\T_c^-\cap\T_\G^{\leq0}\right)[1]\subset\T_c^-\cap\T_\G^{\leq0}$. Since $\Hom(\G,\G[m])=0$ for all $m>0$, we have
	\[\langle \G\rangle^{[0,\infty)}=\bigcup_{s\geq 0}\add(\G[-s]*\G[-s+1]*\cdots*\G).\]
Since $\T_{\G}^{\leq 0}[1]\subseteq (\G[m])^{\perp}$ for all $m\leq 0$, we deduce that $\T_{\G}^{\leq 0}[1]\subseteq\left(\langle\G\rangle^{[0,\infty)}\right)^{\perp}$. 

It is obvious that $\langle \G\rangle^{[0,\infty)}$ is closed under direct summands. The $t$-structure $(\T_{\G}^{\leq 0},\T_{\G}^{\geq 0})$ on $\T$ implies that $\T_{\G}^{\leq 0}$ is closed under direct summands.   Hence $\T_c^-\cap\T_{\G}^{\geq 0}$ is closed under direct summands in $\T_c^-$.

For each $X\in\T_c^-$,  we may assume $X\in\T_\G^{\leq n}$ for some  $n\geq0$. By Proposition \ref{prop-slice-and-stack-silting}, letting $X^{\leq n}=X$,  one can construct a triangle 
 \[Q_{n,-1}\lraf{q_{n,-1}}X^{\leq n}\lraf{c_{n,-1}}X^{\leq-1}\lra Q_{n,-1}[1]\]
such that $Q_{n,-1}\in\G[-n]*\G[-n+1]*\cdots*\G\subset\left\langle \G\right\rangle^{[0,\infty)}$, and  $X^{\leq-1}\in\T_c^-\cap\T_\G^{\leq -1}=\left(\T_c^-\cap\T_{\G}^{\leq 0}\right)[1]$.
	
By Definition \ref{def-cot}, we have shown that $\left(\langle \G\rangle^{[0,\infty)},\T_c^-\cap\T_\G^{\leq0}\right)$ is a co-$t$-structure on $\T_c^-$. The co-heart of this co-$t$-structure is $\langle\G\rangle^{[0,\infty)}\cap (\T_c^-\cap\T_\G^{\leq 0})=\G$.
\end{proof}

The next lemma concerns the sequences in $\T_c^-$ for which the homotopy colimit remains in $\T_c^-$. This may be compared to the theorem in analysis stating that the limit of a Cauchy sequence in a complete metric space also lies within the space.

\begin{Lem}\label{lem-big-Cauchyseq}
Suppose that $X_*: X_1\lraf{f_1} X_2\lraf{f_2}X_3\lra\cdots$ is a sequence in $\T_c^-$ such that, for each $n\geq 1$, the cone $\cone(f_n)\in\T_{\G}^{\leq \theta(n)}$ for some strictly descending function $\theta$. Then $\hocolim X_n\in\T_c^-$. 
\end{Lem}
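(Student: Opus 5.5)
The plan is to verify directly that $X:=\hocolim X_n$ lies in $\T^c*\T_{\G}^{\leq d}$ for every integer $d$, which is exactly the definition of $\T_c^-$ in Proposition \ref{prop-silt-completion}. The argument imitates the first half of the proof of Proposition \ref{prop-silt-completion}. The difference is that the terms $X_n$ now lie in $\T_c^-$ rather than in $\T^c$, so at the end we absorb an extra extension.

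Fix $d\in\mathbb{Z}$. Since $\theta$ is strictly descending and integer-valued, $\theta(n)\to-\infty$, so there is $N$ with $\theta(n)\leq d-1$ for all $n\geq N$. Then $\cone(f_n)\in\T_{\G}^{\leq d-1}$ and $\cone(f_n)[-1]\in\T_{\G}^{\leq d}$ for these $n$. Recall $\T_{\G}^{\leq d}=(\G[\mathbb{Z}_{<-d}])^{\perp}$. Hence for every $G\in\G$ and $m<-d$, both $\Hom(G[m],\cone(f_n)[-1])$ and $\Hom(G[m],\cone(f_n))$ vanish. Applying $\Hom(G[m],-)$ to the triangle on $f_n$ then shows that $\Hom(G[m],f_n)$ is an isomorphism for all $n\geq N$.

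Since $G[m]$ is compact, $\Hom(G[m],X)\simeq\colim_n\Hom(G[m],X_n)$. So the canonical map $\lambda:X_N\to X$ induces an isomorphism $\Hom(G[m],X_N)\to\Hom(G[m],X)$ for all $m<-d$. Complete $\lambda$ to a triangle $X_N\lraf{\lambda}X\lra Y\lra X_N[1]$. For $m<-d$, the long exact sequence reads
\[\Hom(G[m],X_N)\to\Hom(G[m],X)\to\Hom(G[m],Y)\to\Hom(G[m-1],X_N)\to\Hom(G[m-1],X).\]
Here the first map is surjective and the last is injective, because $m-1<-d$ as well. Hence $\Hom(G[m],Y)=0$ for all $m<-d$, that is, $Y\in\T_{\G}^{\leq d}$.

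Since $X_N\in\T_c^-$, we have $X_N\in\T^c*\T_{\G}^{\leq d}$. Therefore
\[X\in X_N*Y\subseteq(\T^c*\T_{\G}^{\leq d})*\T_{\G}^{\leq d}=\T^c*(\T_{\G}^{\leq d}*\T_{\G}^{\leq d})=\T^c*\T_{\G}^{\leq d},\]
using associativity of $*$ and the fact that the aisle $\T_{\G}^{\leq d}$ is closed under extensions. As $d$ was arbitrary, $X\in\T_c^-$.

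The only delicate point is the degree bookkeeping in the second step: we need $\cone(f_n)[-1]$, and not merely $\cone(f_n)$, to be orthogonal to $G[m]$, and we need this for both $m$ and $m-1$. Taking $N$ so that $\theta(n)\leq d-1$ for all $n\geq N$ provides exactly this margin.
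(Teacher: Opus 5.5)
Your proof is correct, but it takes a different route from the paper's.

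\textbf{The paper's route.} It works through the description of $\T_c^-$ as Neeman's precompletion. By induction it builds a ladder of triangles $E_n\to X_n\to Y_n\to E_n[1]$ with $E_n\in\T^c$ and $Y_n\in\T_{\G}^{\leq\theta(n)}$. At each step the truncation level of $Y_{n+1}$ is pushed low enough that $\Hom(E_n,Y_{n+1})=0$. This produces maps $h_n\colon E_n\to E_{n+1}$ with $\cone(h_n)\in\langle\G\rangle^{(-\infty,\theta(n)]}$, so $E_*$ is a Cauchy sequence in $\T^c$. Passing to homotopy colimits gives a triangle $\hocolim E_n\to\hocolim X_n\to\hocolim Y_n\to$. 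The third term vanishes by the generating property of $\G$, and Proposition \ref{prop-silt-completion} then places $\hocolim X_n\simeq\hocolim E_n$ in $\T_c^-$.

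\textbf{Your route.} You check the intrinsic description $\T_c^-=\bigcap_d(\T^c*\T_{\G}^{\leq d})$ directly. A single stage $X_N$ already agrees with $\hocolim X_n$ on $\Hom(G[m],-)$ for all $m<-d$; this is the same computation as in the first half of the proof of Proposition \ref{prop-silt-completion}. The one extra extension is then absorbed by associativity of $*$ and extension-closure of the aisle. Your degree bookkeeping is right: taking $\theta(n)\leq d-1$ covers both $m$ and $m-1$.

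\textbf{Comparison.} Your argument is shorter. It needs only the definition of $\T_c^-$, not its identification with $\precompletionc{\T^c}{\M^{\G}}$. It also avoids taking homotopy colimits of a ladder of triangles, a step that needs some care because homotopy colimits are not functorial. In exchange, the paper's construction produces an explicit Cauchy sequence of compact objects whose homotopy colimit is $\hocolim X_n$. That is the \emph{diagonal} sequence behind the analogy with complete metric spaces mentioned just before the lemma. This extra information is not used in the later applications (Proposition \ref{prop-rest-tstr} and Theorem \ref{theo-testing}).
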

\begin{proof}
First one can construct a triangle $E_1\lra X_1\lra Y_1\lra E_1[1]$ with $E_1\in\T^c$ and $Y_1\in\T_{\G}^{\leq \theta(1)}$. Since $E_1\in\T^c$, there is some positive itneger $d_1$ such that
$E_1\in\add(\G[-d_1]*\cdots *\G[d_1])$. It follows that $\Hom(E_1, Y)=0$ for all $Y\in\T_{\G}^{\leq -d_1-1}$. Based on the choice of $d_1$, we construct a triangle $E_1\lra X_2\lra Y_2\lra E_2[1]$ with $Y_2\in\T_{\G}^{\leq\min\{\theta(2),-d-1\}}$. Then $\Hom(E_1,Y_2)=0$, and there is a commutative diagram 
\[\xymatrix{
E_1\ar[r]\ar[d]^{h_1} & X_1\ar[r]\ar[d]^{f_1} & Y_1\ar[r]\ar[d]^{g_1} & E_1[1]\ar[d]^{h_1[1]}\\
E_2\ar[r] & X_2\ar[r] & Y_2\ar[r] & E_2[1].\\
}\] 
Since $\theta(2)<\theta(1)$ and $Y_1\in\T_{\G}^{\leq \theta(1)}$, we deduce that $\cone(g_1)\in\T_{\G}^{\leq \theta(1)-1}$. Together with the assumption that $\cone(f_1)\in\T_{\G}^{\leq \theta(1)}$, it follows that $\cone(h_1)\in\T_{G}^{\leq \theta(1)}\cap\T^c=\langle \G\rangle^{(-\infty,\theta(1)]}$. Since $Y_2\in\T_{\G}^{\leq \theta(2)}$, we can continue this process and inductively construct a commutative diagram 
\[\xymatrix{
E_1\ar[r]\ar[d]^{h_1} & X_1\ar[r]\ar[d]^{f_1} & Y_1\ar[r]\ar[d]^{g_1} & E_1[1]\ar[d]^{h_1[1]}\\
E_2\ar[r]\ar[d]^{h_2} & X_2\ar[r]\ar[d]^{f_2} & Y_2\ar[r]\ar[d]^{g_2} & E_2[1]\ar[d]^{h_2[1]}.\\
\vdots & \vdots & \vdots & \vdots 
}\] 
such that all rows are triangles, $\cone(h_n), Y_n\in\T_{\G}^{\leq \theta(n)}$ for all $n\geq 0$. Taking homotopy colimit, we get a triangle 
\[\hocolim E_n\lra \hocolim X_n\lra \hocolim Y_n\lra (\hocolim E_n)[1]\]
The sequence $E_*$ is a Cauchy sequence in $\T^c$ by definition, this implies that $\hocolim E_n\in\T_c^-$. For each integer $k$ and $G\in\G$, there is an isomorphism 
\[\Hom(G[k],\hocolim Y_n)\simeq \colim \Hom(G[k], Y_n)\simeq 0\]
since $\Hom(G[k],Y_n)=0$ when $\theta(n)+k<0$ which is always true if $n$ is sufficiently large. The generating property of $\G$ then implies that $\hocolim Y_n=0$. Thus $\hocolim X_n\simeq \hocolim E_n$ belongs to $\T_c^-$. 
\end{proof}

Now we are in a position to consider the restriction of the $t$-structure $(\T_{\G}^{\leq 0},\T_{\G}^{\geq 0})$. 

\begin{Prop}
\label{prop-rest-tstr}
 If $\G$ is contravariantly finite in $\T_c^-$, then $(\T_{\G}^{\leq 0},\T_{\G}^{\geq 0})$ restricts to   $\T_c^-$. 
\end{Prop}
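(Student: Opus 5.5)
To show that $(\T_{\G}^{\leq 0},\T_{\G}^{\geq 0})$ restricts to $\T_c^-$, I will check that for every $X\in\T_c^-$ the truncation $\tau^{\leq 0}X$, computed in $\T$, lies in $\T_c^-$. Since $\T_c^-$ is a triangulated subcategory of $\T$, the cone $\tau^{\geq 1}X$ then also lies in $\T_c^-$. The axioms of a $t$-structure restrict automatically, so this is all that is needed.

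Fix $X\in\T_c^-$ and choose $n\geq 0$ with $X\in\T_{\G}^{\leq n}$; such an $n$ exists because $X\in\T^c*\T_{\G}^{\leq 0}$ and compact objects are bounded above. Apply Proposition \ref{prop-slice-and-stack-silting} to get the families $\slicesilting(X)$ and $\stacksilting(X)$. The triangle $Q_{n,0}\to X\to X^{\leq 0}\to Q_{n,0}[1]$ has $Q_{n,0}\in\langle\G\rangle^{[0,+\infty)}$ (up to the evident indexing), which is a compact object in $\T_{\G}^{\geq 0}$ shifted appropriately, and $X^{\leq 0}\in\T_c^-\cap\T_{\G}^{\leq 0}$. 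This is not yet the $t$-truncation, because $Q_{n,0}$ need not lie in $\T_{\G}^{\geq 1}$.

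Instead I will build $\tau^{\geq 1}X$ as a homotopy colimit. For $j<0$, the morphisms $c_{n,j}\colon X\to X^{\leq j}$ give triangles $Q_{n,j}\to X\to X^{\leq j}\to$. By the octahedral axiom, the induced maps $Q_{n,j}\to Q_{n,j-1}$ have cones $P_j[-j+1]$-type objects, i.e.\ in $\G[-j+1]$, and these lie in $\T_{\G}^{\leq j-1}$ after the shift. The sequence $Q_{n,0}\to Q_{n,-1}\to\cdots$ therefore lies in $\T^c\subseteq\T_c^-$, and its cones lie in $\T_{\G}^{\leq\theta(j)}$ for a strictly descending $\theta$. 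By Lemma \ref{lem-big-Cauchyseq}, $Q:=\hocolim_j Q_{n,j}\in\T_c^-$. Since $X^{\leq j}\in\T_{\G}^{\leq j}$, Lemma \ref{lem-desc-hocolim} gives $\hocolim X^{\leq j}=0$. Hence, after taking homotopy colimits of the compatible triangles (a termwise triangle of sequences gives a triangle of hocolims, possibly after choosing the maps with care), we get $Q\simeq X$, which is useless as it stands.

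The correct approach runs the stacks upward from the low end. Apply the above to a triangle $E\to X\to Y$ with $E\in\T^c$ and $Y\in\T_{\G}^{\leq -d}$ for $d\gg 0$. Since $E$ is compact, the truncations of $E$ in $\T^c$ with respect to the silting co-$t$-structure are not $t$-truncations either. The cleanest approach is to compute $\tau^{\leq 0}X$ directly as $\hocolim$ of the objects $Q_{j,?}$ obtained by approximating from degree $0$ downward. Set $Z_k:=X^{\leq 0}$-side objects. Specifically, apply Proposition \ref{prop-slice-and-stack-silting} to $X^{\leq 0}$ and to $X$ simultaneously, and consider the objects $W_j:=\cone(Q_{n,j}\to Q_{n,0})$-like pieces, forming a sequence in $\T_c^-$ with cones in $\T_{\G}^{\leq j}$. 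Its homotopy colimit $W$ lies in $\T_c^-$ by Lemma \ref{lem-big-Cauchyseq}. The key verification is that the composed map $W\to X$ has cone in $\T_{\G}^{\geq 1}$ and that $W\in\T_{\G}^{\leq 0}$. Both are checked by applying $\Hom(G[k],-)$ and using compactness of $G$ to commute with hocolim, together with the approximation property, which gives surjectivity on $\Hom(G[k],-)$ at each stage, exactly as in the proof of Proposition \ref{prop-slice-and-stack-silting}(2).

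I expect the main obstacle to be this last step: arranging the sequences so that the triangles are compatible enough to pass to homotopy colimits, and verifying through Hom-exactness that the colimit object is precisely $\tau^{\leq 0}X$. If this bookkeeping fails, my fallback is to show that $\Hom(-,X)|_{\G[k]}$ agrees with $\colim$ of the functors from the $\slicesilting$ tower, and to identify $\tau^{\geq 1}X$ directly.
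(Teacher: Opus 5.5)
\textbf{There is a genuine gap.} Your reduction to showing that one truncation of $X$ lies in $\T_c^-$ is fine, and so is your plan to get membership in $\T_c^-$ from Lemma \ref{lem-big-Cauchyseq}. But the proposal never actually constructs $\tau^{\leq 0}X$ or $\tau^{\geq 1}X$, and the candidate you finally settle on cannot work.

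All your towers come from Proposition \ref{prop-slice-and-stack-silting}, which starts at the top degree $n$ and cones off right $\G[-i]$-approximations for $i=n,n-1,\dots$. Its stack objects converge either to $X$ (as you noticed) or to the co-$t$-structure truncation. For $j<0$, the octahedral axiom applied to $c_{n,j}=c_{0,j}c_{n,0}$ gives a triangle $Q_{n,0}\to Q_{n,j}\to Q_{0,j}\to Q_{n,0}[1]$. So the pieces $W_j$ you describe are, up to shift, the stack objects $Q_{0,j}$ of $X^{\leq 0}$, and $\hocolim_j Q_{0,j}\simeq X^{\leq 0}$ because $\hocolim_j X^{\leq j}=0$.

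But $X^{\leq 0}$ receives a map from $X$ rather than mapping to it, and it is not $\tau^{\leq 0}X$. For instance, take $R=k[x]$ and $X=(R\raf{x}R)$ in degrees $0,1$. Any such $X^{\leq 0}$ has $H^0(X^{\leq 0})=\Ker(P_1\to R/x)\neq 0$, whereas $\tau^{\leq 0}X=0$. Nor does the needed Hom-vanishing follow ``exactly as in'' part (2) of that proposition. That argument relies on $X^{\leq i}\in\T_{\G}^{\leq i}$ at every stage, which is exactly what fails if you start the tower at $X$ below its top degree.

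\textbf{The missing idea} is to run the approximation tower from $X$ itself, but only in degrees $0,-1,-2,\dots$. Put $X_1=X$, and let $R_i[i-1]\lraf{r_i}X_i\lraf{\lambda_i}X_{i+1}\to R_i[i]$ be a triangle with $r_i$ a right $\G[i-1]$-approximation; this exists by contravariant finiteness, since $X_i\in\T_c^-$.

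Because $\cone(\lambda_i)=R_i[i]$, two facts hold. First, $\Hom(G[k],\lambda_i)$ is an isomorphism for $k\leq i-2$. Second, $\Hom(G[k],X_i)=0$ for $0\leq k\leq i-2$, i.e.\ each killed degree stays killed. Compactness of $G$ then gives $\hocolim X_i\in\T_{\G}^{\geq 1}$. Meanwhile $\Hom(G[k],-)$ for $k<0$ is unchanged along the tower, so the fibre of $X\to\hocolim X_i$ lies in $\T_{\G}^{\leq 0}$. Hence $\hocolim X_i\simeq\tau^{\geq 1}X$, and it lies in $\T_c^-$ by Lemma \ref{lem-big-Cauchyseq}, since $R_i[i]\in\T_{\G}^{\leq -i}$.

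This is what your phrase ``approximating from degree $0$ downward'' needed to mean. The relevant bookkeeping is stabilisation of $\Hom(G[k],-)$ for $k<0$, not the successive vanishing used in Proposition \ref{prop-slice-and-stack-silting}.
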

\begin{proof}
 For each object $X\in\T_c^-$, the $t$-structure $(\T_{\G}^{\leq 0},\T_{\G}^{\geq 0})$ provide us a  triangle $X^{\leq 0}\lra X\lra X^{\geq 1}\lra X^{\leq 0}[1]$ with $X^{\leq 0}\in\T_{\G}^{\leq 0}$ and $X^{\geq 1}\in\T_{\G}^{\geq 1}$. The main idea of this proof is to show that $X^{\geq 1}\in\T_c^b\subseteq\T_c^-$. Once this were done, it follows immediately that $X^{\leq 0}\in\T_c^-$ since $\T_c^-$ is a triangulated subcategory of $\T$.

 Since $X\in\T_c^-$, there is a positive integer $n$ such that $X\in\T_{\G}^{\leq n}\cap\T_c^-$. We fix the number $n$ in the proof. Set $X_1=X$. We shall construct a sequence of morphisms 
\[X_1\lraf{\lambda_1}X_2\lraf{\lambda_2}X_3\lra\cdots\]
in $\T_{\G}^{\leq n}\cap\T_c^-$ satisfying the following properties.

\smallskip 
\begin{itemize}
    \item[(i)] Each $\lambda_i, i\geq 1$ fits into a triangle
\[\xi_i:\quad R_{i}[i-1]\lraf{r_i}X_{i}\lraf{\lambda_i}X_{i+1}\lra R_i[i]\]
such that $r_i$ is a right $\G[i-1]$-approximation.
\item[(ii)] For $G\in\G$,   $\Hom(G[k],\lambda_i)$ is an isomorphism for $k\leq i-2$, and $\Hom(G[k],X_i)=0$ for $0\leq k\leq i-2$. 
\end{itemize}

\smallskip 
\noindent 
By induction, for each $i\geq 1$, assume that  $X_i\in\T_{\G}^{\leq n}\cap\T_c^-$, take a right $\G[i-1]$-approximation $R_{i}[i-1]\lraf{r_{i}}X_{i}$ and extend it into a triangle
\[\xi_i:\quad R_{i}[i-1]\lraf{r_{i}}X_{i}\lraf{\lambda_{i}}X_{i+1}\lra R_{i}[i].\]
Since $R_{i}\in\G$ and $i\geq 1$, it is clear that $R_{i}[i]$ belongs to $\T_{\G}^{\leq 0}\cap\T^c$ which is contained in $\T_{\G}^{\leq n}\cap\T_c^-$. Hence $X_{i+1}\in \T_{\G}^{\leq n}\cap\T_c^-$.

We next show that each triangle $\xi_{i}$ satisfies the above properties (i) and (ii). For each $G\in \G$, applying $\Hom(G[k],-)$ to the triangle $\xi_{i}$ results in an exact sequence 
\[
\Hom(G[k],R_{i}[i-1])\lraf{\Hom(G[k],r_{i})} \Hom(G[k],X_{i})\lraf{\Hom(G[k],\lambda_{i})} \Hom(G[k],X_{i+1})\ra \Hom(G[k],R_{i}[i]).\]
If $k\leq i-2$, then both $\Hom(G[k],R_{i}[i-1])$ and $\Hom(G[k],R_{i}[i])$ vanish, and consequently $\Hom(G[k], \lambda_{i})$ is an isomorphism. Finally, we  prove that $\Hom(G[k],X_i)=0$ for $0\leq k\leq i-2$ by induction on $i$. Clearly, we can assume that $i\geq 2$, otherwise there is nothing to prove. If $k=i-2$, then $\Hom(G[k],R_{i}[i-1])=0$ and $\Hom(G[k],r_{i-1})$ is surjective since $r_{i-1}$ is a right $\G[i-2]$-approximation. Hence $\Hom(G[k],X_i)=0$. This particularly implies that the case for $i=2$ holds. Now we inducttively assume that the case for $i\geq 2$ is true and assume that $0\leq k< (i+1)-2=i-1$. Then $\Hom(G[k],R_{i}[i])=0$ and by induction $\Hom(G[k], X_i)=0$. It follows that $\Hom(G[k],X_{i+1})=0$. This finishes the induction.	 

Since $\G\subseteq \T^c$, for each $G\in \G$ and  $k\in\mathbb{Z}$, there is an isomorphism 
\[\Hom(G[k],\hocolim X_i)\simeq \colim \Hom(G[k],X_i).\]
By property (ii) above, $\Hom(G[k],\lambda_i)$ is an isomorphism for $i>k+1$. It follows that 
\[\colim \Hom(G[k],X_i)\simeq \Hom(G[k],X_{k+2}),\]
which is zero if $k\geq 0$ by property (ii). Hence $\Hom(G[k],\hocolim X_i)=0$ for all $k\geq 0$. This means that $\hocolim X_i\in\T_{\G}^{\geq 1}$.

By definition, we have the following commutative diagram 
\[\xymatrix{
X_1\ar[r]^{\lambda_1}\ar[rd]^{\theta_1} & X_2\ar[r]^{\lambda_2}\ar[d]^{\theta_2} & X_3\ar[r]\ar[ld]^{\theta_3} &\cdots\\
& \hocolim X_i
}\]
For each $G\in\G$ and $k<0$, applying $\Hom(G[k], -)$ gives the following commutative diagram
\[\xymatrix{
\Hom(G[k],X_1)\ar[r]^{\simeq }\ar[rd]_{\Hom(G[k],\theta_1)} & \Hom(G[k],X_2)\ar[r]^{\simeq }\ar[d]^{\simeq } & \Hom(G[k],X_3)\ar[r]\ar[ld]^{\simeq} &\cdots\\
& \Hom(G[k],\hocolim X_i)\ar[r]_{\simeq} & \colim \Hom(G[k],X_i),
}\]
where the isomorphisms follow from the property (ii) above. This implies that $\Hom(G[k],\theta_1)$ is an isomorphism. Extending the morphism $\theta_1$ into a triangle
\[W\lra X_1\lraf{\theta_1}\hocolim X_i\lra W[1]\]
and applying $\Hom(G[k],-)$ to this triangle, the resulting long exact sequence shows that $\Hom(G[k],W)=0$ for all $k<0$. Hence $W\in\T_{\G}^{\leq 0}$. 

It remains to prove that $\hocolim X_i\in \T_c^b$. We already have $\hocolim X_i\in\T_{\G}^{\geq 1}$. It suffices to show that $\hocolim X_i\in\T_c^-$. This follows from Lemma \ref{lem-big-Cauchyseq} easily. 
\end{proof}

Next we consider the heart of the restricted $t$-structure. 

\begin{Prop}
\label{prop-res-heart}
 Assume that $\G$ is contravariantly finite in $\T_c^-$. Let $\mathcal{H}$ be the heart of
  the $t$-structure $(\T_{\G}^{\leq 0}\cap\T_c^-,\T_{\G}^{\geq 0}\cap\T_c^-)$ on $\T_c^-$. Then there are two equivalences of abelian  categories. 
    \[\arraycolsep=3pt
    \begin{array}{rcccl}
    \modcat{\G} &\llaf{\simeq} &\mathcal{H}&\lraf{\simeq}& \modcat{H^0(\G)}\\
    \Hom(-,X)|_{\G} &\mapsfrom & X &\mapsto & \Hom(-,X)|_{H^0(\G)}
    \end{array}\]
Here $H^0$ is the functor $\tau^{\geq 0}\circ \tau^{\leq 0}$ induced by the $t$-structure. 
\end{Prop}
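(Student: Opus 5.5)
The plan is to treat the left equivalence $\mathcal{H}\to\modcat{\G}$, $X\mapsto\Hom(-,X)|_{\G}$, as the main statement; the right one will follow by identifying $\modcat{\G}$ with $\modcat{H^0(\G)}$. First note that $\Hom(-,X)|_{\G}$ is finitely presented for $X\in\mathcal{H}$. Since $X\in\T_c^-\cap\T_{\G}^{\leq 0}$, Lemma \ref{Lem-contr-in-negpart} gives a right $\G$-approximation $p_0\colon P_0\to X$. Complete it to a triangle $P_0\to X\to X'\to P_0[1]$. The argument in Proposition \ref{prop-slice-and-stack-silting} shows $X'[-1]\in\T_c^-\cap\T_{\G}^{\leq 0}$. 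Applying Lemma \ref{Lem-contr-in-negpart} again yields $P_1\to X'[-1]$, and hence a morphism $P_1\to P_0$. Using $\Hom(\G,P_0[1])=0$, one checks that
\[\Hom(-,P_1)|_{\G}\to\Hom(-,P_0)|_{\G}\to\Hom(-,X)|_{\G}\to 0\]
is exact. So the functor lands in $\modcat{\G}$.

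\textbf{Full faithfulness.} For $X,Y\in\mathcal{H}$, apply $\Hom(-,Y)$ to the triangle $P_1\to P_0\to C\to P_1[1]$, where $C=\cone(P_1\to P_0)$. Since $Y\in\T_{\G}^{\geq 0}$, we have $\Hom(P[k],Y)=0$ for $P\in\G$ and $k>0$. This gives
\[\Hom(C,Y)\cong\Ker\bigl(\Hom(P_0,Y)\to\Hom(P_1,Y)\bigr)\cong\Hom\bigl(\Hom(-,X)|_{\G},\Hom(-,Y)|_{\G}\bigr),\]
where the last isomorphism is Yoneda. It then remains to compare $C$ with $X$: the natural map $C\to X$ should induce $\Hom(C,Y)\cong\Hom(X,Y)$ for $Y$ in the heart. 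Concretely, $C\in\T_{\G}^{\leq 0}$, and the cone of $C\to X$ lies in $\T_{\G}^{\leq -1}$, since $\G$-homology is resolved up to degree $-1$. Therefore $\Hom$ of that cone and of its shift $[-1]$ into $Y\in\T_{\G}^{\geq 0}$ vanishes. This step needs some care with the octahedral axiom.

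\textbf{Essential surjectivity.} This is the step I expect to be the main obstacle. Given $M\in\modcat{\G}$ with presentation $\Hom(-,P_1)|_{\G}\xrightarrow{(-,f)}\Hom(-,P_0)|_{\G}\to M\to 0$, let $C=\cone(f)\in\T^c\cap\T_{\G}^{\leq 0}$ and take $X=H^0(C)=\tau^{\geq 0}C$, computed inside $\T_c^-$. This is legitimate because Proposition \ref{prop-rest-tstr} says the $t$-structure restricts to $\T_c^-$, so $X\in\mathcal{H}$. Next, $\Hom(G,\tau^{\geq 0}C)\cong\Hom(G,C)$ because $\Hom(G,(\tau^{\leq -1}C)[m])=0$ for $m=0,1$. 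Finally, $\Hom(-,C)|_{\G}\cong M$ follows from the long exact sequence together with $\Hom(\G,P_1[1])=0$. The point to watch here is exactly the vanishing $\Hom(G,\tau^{\leq-1}C[1])=0$: it holds because $\tau^{\leq -1}C[1]\in\T_{\G}^{\leq -2}$.

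\textbf{The right equivalence.} For $G\in\G$ we have $G\in\T_{\G}^{\leq 0}$, so $H^0(G)=\tau^{\geq 0}G$. By the adjunction, $\Hom(G,X)\cong\Hom(H^0(G),X)$ for $X\in\mathcal{H}$. Applying this with $X=H^0(G')$, and using the same adjunction for $G'$ together with $\Hom(G,\tau^{\leq -1}G'[m])=0$ for $m=0,1$, we get $\Hom_{\G}(G,G')\to\Hom(H^0G,H^0G')$ surjective with kernel the maps factoring through $\tau^{\leq-1}G'$. Thus $H^0(\G)$ is a quotient of $\G$ by an ideal that vanishes on every $\Hom(-,X)|_{\G}$ with $X\in\mathcal{H}$. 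Restriction along $\G\to H^0(\G)$ then identifies $\Hom(-,X)|_{H^0(\G)}$ with $\Hom(-,X)|_{\G}$. So the right equivalence is the composite of the left equivalence with the equivalence, on the essential image, between the relevant module categories, which is checked on finite presentations. The presentations of $\Hom(-,X)|_{H^0(\G)}$ come from applying $H^0$ to the presentations above.
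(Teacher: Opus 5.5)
Your proposal is correct in substance and follows a different route from the paper; one degree bound needs sharpening. The paper works on the $H^0(\G)$ side first. It shows that $\tau^{\geq 0}$ induces an equivalence $\G\to H^0(\G)$, since both $\Hom(G_1,G_2)\to\Hom(G_1,\tau^{\geq 0}G_2)$ and $\Hom(\tau^{\geq 0}G_1,\tau^{\geq 0}G_2)\to\Hom(G_1,\tau^{\geq 0}G_2)$ are bijective. It then shows that each $H^0(G)$ is projective in $\mathcal{H}$, because $\Hom(H^0(G),-)\cong\Hom(G,-)$ is exact on $\mathcal{H}$. Finally, every $X\in\mathcal{H}$ is a quotient of some $H^0(G_X)$: take a right $\G$-approximation, whose cocone lies in $\T_{\G}^{\leq 0}$, and use the long exact $H^0$-sequence. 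The equivalence $\mathcal{H}\simeq\modcat{H^0(\G)}$ then follows from the standard recognition of an abelian category with enough projectives, and the $\modcat{\G}$ version is obtained by transport along $\G\simeq H^0(\G)$.

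You instead prove the $\modcat{\G}$ version directly: finite presentation from two successive approximations, full faithfulness from Hom-vanishing against the two-step resolution, and essential surjectivity via the explicit quasi-inverse $M\mapsto H^0(\cone f)$. Your route avoids the recognition theorem, whose proof contains essentially your fullness and surjectivity arguments. It also shows concretely that every object of the heart is $H^0$ of a compact object in $\G*\G[1]$. The paper's route is shorter and identifies the projectives of $\mathcal{H}$ and the exactness of the functors for free. Both routes use Proposition \ref{prop-rest-tstr} in the same way, to know that $\tau^{\geq 0}$ of a compact object lies in $\T_c^-$.

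\textbf{The bound to fix.} Put $D=\cone(C\to X)$. The exact sequence $\Hom(D,Y)\to\Hom(X,Y)\to\Hom(C,Y)\to\Hom(D[-1],Y)$ shows that fullness needs $\Hom(D[-1],Y)=0$, i.e.\ $D\in\T_{\G}^{\leq -2}$. The bound $D\in\T_{\G}^{\leq -1}$ that you state only yields faithfulness, since then $D[-1]\in\T_{\G}^{\leq 0}$ and $\Hom(D[-1],Y)$ need not vanish.

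The sharper bound does hold. The octahedral axiom for $P_0\to C\to X$ gives a triangle $P_1[1]\to X'\to D\to P_1[2]$. Its first map is the shift of your approximation $P_1\to X'[-1]$, because $\Hom(P_1[1],X)=0$. The cone of a right $\G$-approximation of an object of $\T_{\G}^{\leq 0}$ lies in $\T_{\G}^{\leq -1}$, so $D\in\T_{\G}^{\leq -2}$.

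More simply, you can bypass $C$ altogether. From $\Hom(X',Y)=0$ you get $\Hom(X,Y)\cong\Ker\bigl(\Hom(P_0,Y)\to\Hom(X'[-1],Y)\bigr)$. The map $\Hom(X'[-1],Y)\to\Hom(P_1,Y)$ is injective for the same reason, which identifies the kernel with the Yoneda side.

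\textbf{The last step.} The ``kernel'' you describe is zero: $\Hom(G,\tau^{\leq -1}G')=0$ is exactly the case $m=0$ you invoke. So $H^0\colon\G\to H^0(\G)$ is an equivalence, as in the paper. The right-hand equivalence is then just the left one composed with $\modcat{H^0(\G)}\simeq\modcat{\G}$, $F\mapsto F\circ H^0$, using $\Hom(H^0(G),X)\cong\Hom(G,X)$. The ``quotient by an ideal'' framing is unnecessary. For a genuinely nonzero ideal, the transfer of finite presentability you gesture at would not be automatic.
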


\begin{proof}
Since $\tau^{\leq 0}$ is isomorphic to the  identity functor on $\T_{\G}^{\leq 0}$, the functor $\tau^{\geq 0}$ coincide with $H^0$ on $\T_{\G}^{\leq 0}$. Moreover, for $X\in\T_{\G}^{\leq 0}$ and $Y\in\T_{\G}^{\geq 0}$, the natural homomorphism 
\[\tau^{\geq 0}: \Hom(X, Y)\lra \Hom(H^0(X), Y)\]
is an isomorphism. Actually, this can be obtained by applying $\Hom(-,Y)$ to the canonical triangle 
\[\xi_X: \quad \tau^{\leq -1}X\lraf{\epsilon} X\lraf{\mu} \tau^{\geq 0}X\lra (\tau^{\leq -1}X)[1].\]
For two objects $G_1,G_2\in \G$, the triangles for $G_1,G_2$ gives rise to a commutative diagram 
\[\xymatrix@C=15mm{
\Hom(G_1,G_2)\ar[r]^{(-,\mu)}\ar[rd]_{\tau^{\geq 0}=H^0} & \Hom(G_1,\tau^{\geq 0}G_2)\\
& \Hom(\tau^{\geq 0}G_1,\tau^{\geq 0}G_2)\ar[u]^{(\mu,-)}
}\]
Both $(-,\mu)$ and $(\mu,-)$ here are isomorphisms. This follows by applying $\Hom(G_1,-)$ and ${}_{\G}(-,G_2)$ to the canonical triangles $\xi_{G_2}$ and $\xi_{G_1}$, respectively. It follows that $H^0$ in the above diagram is also an isomorphism. Hence $H^0=\tau^{\geq 0}: \G\lra H^0(\G)$ is an additive equivalence. Thus, we get an equivalence 
\[H: \Modcat{H^0(\G)}\lraf{\simeq} \Modcat{\G}, \quad F\mapsto F\circ H^0.\]

Since $\G$ is contravariantly finite in $\T_c^-$, it follows easily that $\Hom(-, X)|_{\G}$ is finitely presented in $\Modcat{\G}$ for all $X\in \T_c^-$, that is, $\Hom(-,X)|_{\G}\in\modcat{\G}$. Particularly, we have a diagram of functors which is commutative up to natural isomorphisms. 
\[\xymatrix{
\mathcal{H}\ar[r]^(.45){\Y^0}\ar[rd]_{\Y} & \modcat{H^0(\G)}\ar[d]^{H}\\
& \modcat{\G}
}\]
Here $\Y^0$ ($\Y$, respectively) sends $X\in \mathcal{H}$ to $\Hom(-,X)|_{H^0(\G)}$ ($\Hom(-,X)|_{\G}$, respectively). It suffices to show that $\Y^0$ is an exact equivalence.

For each short exact sequence $0\lra X\lra Y\lra Z\lra 0$ in $\mathcal{H}$ and $G\in\G$, applying $\Hom(G,-)$, one gets a commutative diagram with exact rows.
\[\xymatrix@C=5mm{
\Hom(G, Z[-1])\ar[r]\ar@{=}[d] & \Hom(G, X)\ar[r]\ar[d]^{\tau^{\geq 0}} & \Hom(G, Y)\ar[r]\ar[d]^{\tau^{\geq 0}} & \Hom(G, Z)\ar[r]\ar[d]^{\tau^{\geq 0}} & \Hom(G, X[1])\ar@{=}[d]\\
0\ar[r] & \Hom(H^0(G), X)\ar[r] & \Hom(H^0(G), Y)\ar[r] & \Hom(H^0(G), Z)\ar[r] & 0\\
}\]
The vertical maps induced by $\tau^{\geq 0}$ are all isomorphisms. Hence $H^0(G)$ is a projective object in $\mathcal{H}$ for all $G\in\G$ and both $\Y^0$ and $\Y$ above respectively are exact functors from $\mathcal{H}$ to $\modcat{H^0(\G)}$ and $\modcat{\G}$. Next, we show that $\mathcal{H}$ has enough projective objects and the projective objects are precisely thos objects in $H^0(\G)$. For any $X\in\mathcal{H}$, we can form a triangle $K\lra G_X\lraf{f} X\lra K[1]$ such that $f$ is a right $\G$-approximation. For each $G\in\G$, applying $\Hom(G,-)$ to this triangle implies that $\Hom(G, K[i])=0$ for all $i>0$, that is, $K\in\T_{\G}^{\leq 0}$. Thus, all terms $K, G_X, X$ and $K[1]$ in the above triangle are in $\T_{\G}^{\leq 0}$. Thus applying $\tau^{\geq 0}$, which coincide with $H^0$ on $\T_{\G}^{\leq 0}$,  to this triangle gives rise to a triangle $H^0(K)\lra H^0(G)\lraf{H^0(f)} X\lra H^0(K)[1]$. This means that the sequence 
\[0\lra H^0(K)\lra H^0(G_X)\lraf{H^0(f)} X\lra 0\]
is an exact sequence in $\mathcal{H}$. Hence $\mathcal{H}$ has enough projective objects and $\projcat{\mathcal{H}}=H^0(\G)$.  It follows that $\Y^0: \mathcal{H}\lra \modcat{H^0(\G)}$ is an equivalence.
\end{proof}

Now we are ready to give the proof of Theorem \ref{theo-main-intro}. 
\begin{proof}[Proof of Theorem \ref{theo-main-intro}]
    By Proposition \ref{prop-res2contr} and Proposition \ref{prop-rest-tstr}, (ii) and (iii) are equivalent. Clearly (ii) implies (i). Assume now that (i) holds. By definition, each object $X\in\T_c^-$ admits a triangle $C\raf{f} X\ra Y\ra C[1]$ with $C\in \T^c$ and $Y\in\T_{\G}^{\leq -1}$. This induces an exact sequence of functors 
    \[\Hom(-,C)|_{\G}\lraf{(-,f)} \Hom(-,X)|_{\G}\lra \Hom(-,Y)|_{\G}=0,\] 
showing that $(-,f)$ is an epimorphism in $\Modcat{\G}$. By (i), the object $C\in\T^c$ admits a right $\G$-approximation, say $r: G\lra C$ with $G\in\G$. Then the morphism $f\circ r: G\lra X$ is a right $\G$-approximation of $X$. Hence $\G$ is contravariantly finite in $\T_c^-$. This proves that (i), (ii) and (iii) are equivalent.

Suppose (iii) holds. For each $X\in\T_c^b$, the object $\tau^{\geq 1}X$ in the canonical triangle $\tau^{\leq 0}X\lra X\lra \tau^{\geq 1}X\lra (\tau^{\leq 0}X)[1]$ belongs to $\T_c^-\cap \T_{\G}^{\geq 1}\subseteq\T_c^b$. It follows that the $t$-structure $(\T_{\G}^{\leq 0}\cap\T_c^-, \T_{\G}^{\geq 0}\cap\T_c^-)$ can always further restrict to a $t$-structure on $\T_c^b$. Hence ${\rm (iii)}\Rightarrow {\rm (iv)}$. 

${\rm (iv)}\Rightarrow {\rm (v)}$ follows from Proposition \ref{prop-res2contr}.  The statement (v) implies (i) is clear if $\T^c\subseteq\T_c^b$. 

Now assume that the equivalent conditions (i)-(iii) hold.

Let $\U=\T_c^-$ or $\T_c^b$. Since $\G$ is a silting subcategory in $\T^c$, each object $C\in\T^c$ belongs to $\add(\G[-r]*\cdots*\G[r])$ for some positive integer $r$. This implies that $X\in\T_{\G}^{\leq r}$. By definition,  every object $X\in\U\subseteq \T_c^-$  is an extension of a compact object $C$ and some object $Y\in\T_{\G}^{\leq n}$. If $C\in\T_{\G}^{\leq r}$, then $X\in\T_{\G}^{\leq \max\{r,n\}}$. Hence 
$\U\subseteq \bigcup_{m\in\mathbb{Z}}\T_{\G}^{\leq m}\cap\U$ and the restriction of $(\T_{\G}^{\leq 0},\T_{\G}^{\geq 0})$ to $\U$ is an above-bounded $t$-structure. In case that $\U=\T_c^b\subseteq \T_{\G}^+$, the $t$-structure is also below-bounded. Hence the $t$-structure further restricts to a bounded $t$-structure on $\T_c^b$. Note that 
 \[\T_{\G}^{\leq 0}\cap\T_{\G}^{\geq 0}\cap\T_c^-\subseteq \T_{\G}^{\leq 0}\cap\T_{\G}^{\geq 0}\cap\T_c^b,\]
which must be equal. Thus the two restricted $t$-structure have the same heart. Together with Proposition \ref{prop-res-heart}, this proves (1).

It follows from Proposition \ref{prop-res-co-t} that  $(\langle\G\rangle^{[0,+\infty)}, \T_{\G}^{\leq 0})$ is a co-$t$-structure  on $\T_c^-$ with co-heart $\G$.  Suppose that $\T^c\subseteq\T_c^b$. For each object $X\in\T_c^b$, the co-$t$-structure on $\T_c^-$ gives a canonical triangle $C\lra X\lra Y\lra C[1]$ with $C\in\langle G\rangle^{[0,+\infty)}$ and $Y\in\T_c^-\cap\T_{\G}^{\leq 0}$. The assumption $\T^c\subseteq\T_c^b$ implies that $C\in\T_c^b$ and consequently $Y\in\T_c^b$. This shows that the triangle is actually in $\T_c^b$. Hence the co-$t$-structure further restricts  to a co-$t$-structure $(\langle\G\rangle^{[0,+\infty)},\T_c^b\cap\T_{\G}^{\leq 0})$ on $\T_c^b$ and the co-heart is still $\G$.  This proves (2).
\end{proof}

The following corollary is a consequence of Theorem \ref{theo-main-intro}.
\begin{Coro}
Let $R$ be a ring. Then the following are equivalent.
\begin{itemize}
    \item $R$ is right coherent.
    \item The standard $t$-structure of $\D[\Modcat{R}]$ restricts to $\Kf[\projcat{R}]$. 
    \item The standard $t$-structure of $\D[\Modcat{R}]$ restricts to $\Kfb[\projcat{R}]$. 
\end{itemize}
\end{Coro}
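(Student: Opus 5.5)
We apply Theorem \ref{theo-main-intro} with $\T=\D[\Modcat{R}]$ and $\G=\add(R)=\projcat{R}$. Then $\T$ is compactly generated with small coproducts, $\T^c=\Kb[\projcat{R}]$, and $\G$ is a silting subcategory of $\T^c$. The induced $t$-structure $(\T_{\G}^{\leq 0},\T_{\G}^{\geq 0})$ is the standard one, because $\Hom(R[-k],X)=H^k(X)$. As recalled after Lemma \ref{lem-tcf-coprod}, $\T_c^-=\Kf[\projcat{R}]$ and $\T_c^b=\Kfb[\projcat{R}]$. Moreover $\T^c=\Kb[\projcat{R}]\subseteq\Kfb[\projcat{R}]=\T_c^b$, so in Theorem \ref{theo-main-intro} all five conditions (i)--(v) are equivalent. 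In particular (iii)$\Leftrightarrow$(iv) gives the equivalence of the second and third statements of the corollary. It remains to show that $R$ is right coherent if and only if condition (i) holds, i.e. $\projcat{R}$ is contravariantly finite in $\Kb[\projcat{R}]$.

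For a complex $P\in\Kb[\projcat{R}]$, write $Z^0(P)=\Ker(d^0)$. A map $Q\to P$ in $\Kb[\projcat{R}]$ with $Q\in\projcat{R}$ (sitting in degree $0$) is the same as a map $Q\to Z^0(P)$ modulo maps factoring through $d^{-1}:P^{-1}\to Z^0(P)$. Thus $\Hom(-,P)|_{\projcat{R}}\cong\Hom_R(-,H^0(P))|_{\projcat{R}}$, and a right $\projcat{R}$-approximation of $P$ exists if and only if $H^0(P)$ is a finitely generated module. Equivalently, (i) says that $\Hom(-,P)|_{\G}$ is finitely generated, and by the proof of Proposition \ref{prop-res-heart} it is then even finitely presented.

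\textbf{Coherent $\Rightarrow$ (i).} If $R$ is right coherent, the finitely presented modules form an abelian category containing $\projcat{R}$. Hence every cohomology module of a bounded complex of finitely generated projectives is finitely presented, in particular finitely generated. So every $P$ has a right $\G$-approximation, and (i) holds.

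\textbf{(i) $\Rightarrow$ coherent.} Let $I$ be a finitely generated right ideal, with a surjection $R^n\to I$. Consider the two-term complex $P=(R^n\xrightarrow{f}R)$, placed in degrees $-1,0$, where $f$ is the composition $R^n\to I\hookrightarrow R$. Then $H^{-1}(P)=\Ker(f)$. Apply (i) to $P[-1]$, which has $H^0(P[-1])=H^{-1}(P)=\Ker(f)$. By the previous paragraph, $\Ker(f)$ is finitely generated. Hence $I\cong R^n/\Ker(f)$ is finitely presented, and $R$ is right coherent.

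The only real work is the identification of $\Hom(Q,P)$ with $\Hom_R(Q,H^0(P))$. This holds because $Q$ is projective and concentrated in one degree, so a chain map $Q\to P$ is a map $Q\to Z^0(P)$ and homotopies are exactly maps into $P^{-1}$. The identification is also what lets us translate "right approximation exists" into "$H^0(P)$ is finitely generated" in both directions.
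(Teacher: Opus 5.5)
Your proof is correct and follows the paper's route. You specialize Theorem \ref{theo-main-intro} to $\T=\D[\Modcat{R}]$ and $\G=\add(R)$, then use $\Hom(R,P)\cong H^0(P)$ to turn contravariant finiteness into finite generation of $H^0$, which amounts to right coherence. The only minor difference is that you test contravariant finiteness in $\T^c=\Kb[\projcat{R}]$ (condition (i)) and use $\T^c\subseteq\T_c^b$ to make all of (i)--(v) equivalent, whereas the paper tests it directly in $\Kf[\projcat{R}]$ and $\Kfb[\projcat{R}]$, i.e.\ via conditions (ii) and (v).
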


\begin{proof}
Set $\T=\D[\Modcat{R}]$ and $\G=\add(R)$. Then $\T^c=\Kb[\projcat{R}]$. Then the precompletion and completion of $\T^c$ under the metric $\M^{\G}$ induced by $\G$ are $\Kf[\projcat{R}]$ and $\Kfb[\projcat{R}]$. Let $\U$ to be $\Kf[\projcat{R}]$ or $\Kfb[\projcat{R}]$. It is easy to see that $\add(R)$ is contravariantly finite in $\U$ if and only if $H^0(\cpx{P})$ a finitely generated $R$-module for all $\cpx{P}\in\U$, if and only if $R$ is right coherent, that is, the kernel of each morphism between two finitely generated projective right $R$-modules is finitely generated. The corollary then follows from Theorem \ref{theo-main-intro} directly.
\end{proof}

\section{Weak generation of presilting objects}
In a compactly generated triangulated category $\T$ with coproducts, a classic result is that if a set $\mathcal{X}$ of compact objects weakly generates  $\T$, that is, for any non-zero  object $Y\in\T$, there is some object $X\in\mathcal{X}$ and integer $n$ such that $\Hom(X,Y[n])\neq 0$, then $\thick(\mathcal{X})=\T^c$. This means we need to test all objects in the big category $\T$.  A natrual question is, for some special cases, can we use certain smaller testing subcategories? 
Our answer to this question is the following theorem. 
\begin{Theo}\label{theo-testing}
Let $R$ be a commutative noetherian ring, and let   $\T$ be a compactly generated $R$-linear triangulated category with small coproducts. Suppose that $\T^c$ has a silting object $G$  such that $\Hom(G,G[n])$ is a finitely generated $R$-module for all $n\in\mathbb{Z}$, and that $M\in\T^c$ is a presilting object. Then $\thick(M)=\T^c$ provided that for each $X\in\T_c^-$, there exists some integer $n$ such that $\Hom(M, X[n])\neq 0$. 
\end{Theo}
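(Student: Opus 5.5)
Let $\mathcal{M}=\thick(M)\subseteq\T^c$. Since $\T^c$ is idempotent complete, it suffices to show that $G\in\mathcal{M}$. The plan is to realise $G$ as an extension of something in $\mathcal{M}$ by an object of $\T_c^-$ orthogonal to all shifts of $M$. The hypothesis then forces that object to vanish.

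\textbf{Step 1: finiteness.} Since $\Hom(G,G[n])$ is finitely generated over $R$ for every $n$, and both $M$ and $G$ lie in $\thick(G)$, a d\'evissage shows that $\Hom(Y,Z[n])$ is a finitely generated $R$-module for all $Y,Z\in\T^c$. Moreover, $\Hom(M,G[n])$ vanishes for $n\gg 0$ and for $n\ll 0$, because $M\in\add(G[-r]*\cdots*G[r])$ for some $r$ and $G$ is silting. Consequently $\add(M[k])$ is contravariantly finite in $\T^c$: for $Y\in\T^c$, choose finitely many $R$-generators $f_1,\dots,f_s$ of $\Hom(M[k],Y)$; then $(f_1,\dots,f_s)\colon M[k]^s\to Y$ is a right $\add(M[k])$-approximation. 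The same argument works for any $Y$ with $\Hom(M[k],Y)$ finitely generated.

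\textbf{Step 2: approximation tower.} Set $Y_0=G$. Inductively, take a right $\add(M[-i])$-approximation $M_i[-i]\to Y_i$ and complete it to a triangle $M_i[-i]\to Y_i\to Y_{i+1}\to M_i[-i+1]$ for $i\geq i_0$. This is the construction of Proposition \ref{prop-rest-tstr}, and of Proposition \ref{prop-slice-and-stack-silting} in the shifted direction, with $\G$ replaced by $\add(M)$. Since $M$ is presilting, the same inductive argument gives two properties. First, $\Hom(M[k],Y_i)=0$ for the already processed range of $k$. Second, $\Hom(M[k],Y_i\to Y_{i+1})$ is an isomorphism outside a shrinking window. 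All $Y_i$ lie in $\T^c$, and the cones $M_i[-i+1]$ lie in $\langle G\rangle^{(-\infty,-i+1+r]}$. So $Y_*$ is a Cauchy sequence for $\M^{\G}$, and $Y_\infty:=\hocolim Y_i\in\T_c^-$ by Proposition \ref{prop-silt-completion}. The map $G\to Y_\infty$ has cone built from the $M_i[\ast]$; by the octahedral axiom its cocone is the homotopy colimit of objects in $\mathcal{M}$.

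\textbf{Step 3: orthogonality.} We must also handle the direction $k\to+\infty$. Because $\Hom(M,G[n])=0$ for $n\ll0$, we have $\Hom(M[k],G)=0$ for $k\gg0$. Because cones only add shifts $M[-i+1]$ with $i\geq i_0$, and $M$ is presilting, $\Hom(M[k],Y_i)$ remains zero for $k\gg 0$. Starting the tower at a suitably chosen $i_0$, and using compactness of $M$, we get $\Hom(M[k],Y_\infty)=\colim_i\Hom(M[k],Y_i)=0$ for all $k$. The hypothesis applied to $Y_\infty\in\T_c^-$ then gives $Y_\infty=0$.

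\textbf{Step 4: conclusion.} Since $Y_\infty=0$, the identity of $G$ factors through $\hocolim$ of the cocones $Z_i\in\mathcal{M}$. By compactness of $G$, the identity factors through some finite stage $Z_i$, so $G$ is a direct summand of $Z_i\in\mathcal{M}$. Hence $G\in\mathcal{M}$.

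The main obstacle is Steps 2 and 3: arranging a single tower that kills $\Hom(M[k],-)$ for \emph{all} $k$ while staying Cauchy. Killing only negative shifts, as in Proposition \ref{prop-rest-tstr}, is not enough. The intended fix is to use the two-sided vanishing of $\Hom(M,G[n])$ from Step 1, so that only finitely many positive shifts need killing at the start. Those are handled by finitely many approximation steps, and the infinite tower then handles the negative shifts. The finite-generation hypothesis over the noetherian ring $R$ is what makes each approximation exist.
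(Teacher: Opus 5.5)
Your Steps 2 and 3 do not work as written. The tower runs in the wrong direction, and the other side is patched with a vanishing statement that is false in this generality.

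\emph{The tower is not Cauchy.} The paper's convention is $\langle\A\rangle_1^{\Phi}=\add(\A[-\Phi])$, so $\langle G\rangle^{(-\infty,-n]}=\T^c\cap\T_G^{\leq -n}$ is built from $G[m]$ with $m\geq n$. A Cauchy sequence therefore needs cones built from $G[m]$ with $m\to+\infty$. Your cones $M_i[-i+1]$, with $i$ increasing, are built from $G[m]$ with $-r-i+1\leq m\leq r-i+1$. They lie in $\T_G^{\leq r+i-1}$, not in $\langle G\rangle^{(-\infty,-i+1+r]}$; you seem to have read $\langle G\rangle^{\Phi}$ as built from $G[\Phi]$. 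So $Y_*$ is not Cauchy, and nothing places $Y_\infty$ in $\T_c^-$.

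\emph{Killed Hom spaces come back.} In this direction presilting only protects the shifts not yet processed. The triangle $M_i[-i]\lraf{r_i}Y_i\to Y_{i+1}\to M_i[-i+1]$ leaves $\Hom(M[k],-)$ unchanged only for $k<-i$. Meanwhile $\Hom(M[-i+1],Y_{i+1})\cong\Ker\big(\Hom(M,M_i)\to\Hom(M[-i],Y_i)\big)$, a syzygy-type term that is in general nonzero.

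\emph{The two-sided vanishing is false.} Silting gives $\Hom(M,G[n])=0$ only for $n>r$, not for $n\ll 0$. Take the dg algebra $A=k[x]$ with $|x|=-2$ and zero differential. Then $G=M=A$ is silting in $\mathcal{D}(A)^c$ with finite-dimensional Hom spaces, yet $\Hom(M[k],G)=H^{-k}(A)\neq 0$ for all even $k\geq 0$. So the side $k\to+\infty$, where infinitely many nonzero Hom spaces can occur, is exactly what the infinite process must handle; it is not automatic.

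The repair is to reverse the tower.
\begin{enumerate}
\item Put $Y_{-r}=G$. For $k\geq -r$, cone off a right $\add(M[k])$-approximation $M_k[k]\to Y_k$; it exists by your Step 1, since $Y_k\in\T^c$.
\item Presilting and the approximation property give $\Hom(M[j],Y_k)\cong\Hom(M[j],Y_{k+1})$ for $j<k$, and $\Hom(M[k],Y_{k+1})=0$.
\item The cones $M_k[k+1]$ lie in $\langle G\rangle^{(-\infty,r-k-1]}$, so $Y_*$ is Cauchy and $Y_\infty\in\T_c^-$ by Proposition \ref{prop-silt-completion}.
\item $\Hom(M[j],Y_\infty)$ vanishes for $j\geq -r$ by construction. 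For $j<-r$ it equals $\Hom(M[j],G)=\Hom(M,G[-j])=0$, which is the one-sided vanishing silting actually provides.
\item Hence $Y_\infty=0$. Compactness of $G$ forces $G\to Y_k$ to be zero for some $k$. So $G$ is a direct summand of its cocone, an iterated extension of the $M_l[l]$, and hence lies in $\thick(M)$.
\end{enumerate}

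Corrected this way, your argument is more economical than the paper's. The paper resolves an arbitrary $X\in\T_c^-$ by infinite coproducts $\coprod_{k\geq\min(X)}M_{X,k}$ of shifts of $M$. It then needs Lemma \ref{lem-big-Cauchyseq} to keep $\hocolim X_n$ in $\T_c^-$, and a separate induction showing that maps from compact objects to $X_n$ factor through $\thick(M)$. In your version every stage is compact, and the last step is just a split epimorphism.
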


For the proof of Theorem \ref{theo-testing}, we need the following lemma. 
\begin{Lem}\label{lem-noeth-fin}
	Keep the assumptions in Theorem \ref{theo-testing}. For each object $C\in\T^c$ and $X\in\T_c^-$, the Hom-space $\Hom(C,X)$ is a finitely generated as an $R$-module. Particularly, $\add(C)$ is contravariantly finite in $\T_c^-$.  
\end{Lem}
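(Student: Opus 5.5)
The strategy is to reduce to the case $C=G[m]$ and then exploit the triangle defining membership in $\T_c^-$. Since $\T^c=\thick(G)$ and finitely generated $R$-modules over the noetherian ring $R$ form a Serre subcategory, the class of compact objects $C$ with $\Hom(C[k],X)$ finitely generated for all $k$ is closed under shifts, extensions (by the long exact sequence $\Hom(C'',X)\to\Hom(C,X)\to\Hom(C',X)$, whose middle term is an extension of a submodule of one f.g.\ module by a quotient of another) and direct summands. It therefore suffices to show that $\Hom(G[m],X)$ is finitely generated for every $m\in\mathbb{Z}$ and every $X\in\T_c^-$.

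Fix $m$ and $X\in\T_c^-$. By the definition $\T_c^-=\bigcap_n(\T^c*\T_{\G}^{\leq n})$ with $\G=\add(G)$, choose a triangle $E\to X\to Y\to E[1]$ with $E\in\T^c$ and $Y\in\T_{\G}^{\leq -m-2}$. Then $\Hom(G[m],Y)=0=\Hom(G[m],Y[-1])$, because $\T_{\G}^{\leq -m-2}=(\G[\mathbb{Z}_{<m+2}])^{\perp}$, so that $Y$ and $Y[-1]$ are killed by $\Hom(G[m],-)$. Hence $\Hom(G[m],E)\cong\Hom(G[m],X)$, and it remains to check that $\Hom(G[m],E)$ is finitely generated for compact $E$. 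Once more by the d\'evissage above, and since $E\in\thick(G)$, this reduces to the statement that $\Hom(G[m],G[n])=\Hom(G,G[n-m])$ is finitely generated, which is precisely the hypothesis. (Equivalently, one may first run the d\'evissage in the second variable over $\thick(G)$ and then in the first.)

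For the second statement, contravariant finiteness of $\add(C)$ in $\T_c^-$ follows by a standard argument. Write $\Hom(C,X)$ as generated over $R$ by $f_1,\dots,f_r$, and put $(f_1,\dots,f_r):C^{\oplus r}\to X$. Every morphism $C\to X$ is an $R$-linear combination $\sum a_if_i$, and since $\T$ is $R$-linear, $a_i f_i=f_i\circ(a_i\cdot 1_C)$; thus it factors through $C^{\oplus r}$, and so does every morphism from a summand of $C^{\oplus s}$. The main point requiring care is only the choice of truncation index so that both $Y$ and $Y[-1]$ are orthogonal to $G[m]$; everything else is routine d\'evissage.
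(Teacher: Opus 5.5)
Your proof is correct and takes essentially the paper's approach: a truncation triangle $E\to X\to Y\to E[1]$ with $Y$ deep enough that $\Hom(-,Y)$ and $\Hom(-,Y[-1])$ vanish on the relevant compact object, which reduces the claim to finite generation of Hom-spaces between compact objects. The only difference is that the paper applies this directly to $C\in\add(G[-r]*\cdots*G[r])$ with $Y\in\T_G^{\leq -r-2}$ instead of first reducing to $C=G[m]$; the paper also leaves implicit both the d\'evissage over the noetherian ring and the ``particularly'' clause, which you spell out.
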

\begin{proof}
By assumption, there is some positive integer $r$ such that $C\in\add(G[-r]*\cdots*G[r])$. By the definition of $\T_c^-$, there is a triangle 
$C'\lra X\lra Y\lra C'[1]$ with $C'\in\T^c$ and $Y\in\T_{G}^{\leq -r-2}$. Applying $\Hom(C,-)$ to this triangle results in an exact sequence 
\[\Hom(C,Y[-1])\lra \Hom(C,C')\lra\Hom(C,X)\lra\Hom(C,Y)\]
Both $Y$ and $Y[-1]$ belong to $\T_{G}^{\leq -r-1}$. It follows that $\Hom(C,Y)=0=\Hom(C,Y[-1])$. Thus we get an isomorphism $\Hom(C,C')\simeq\Hom(C,X)$ of $R$-modules. Now $C'$ belongs to $\add(G[-n]*\cdots*G[n])$ for some positive integer $n$ since $C'\in\thick(G)$. The assumptions in Theorem \ref{theo-testing} then imply that 
$\Hom(C,C')$ is a finitely generated $R$-module. 
\end{proof}

\begin{proof}[Proof of Theorem \ref{theo-testing}]
For simplicity, we write $\Hom(-,-)$ for $\Hom(-, -)$. Clearly, we have 
\[M\in\add(G[-r]*\cdots *G[r])\]
for some positive $r$. It follows that $\Hom(M, M[k])=0$ for all $k>2r$. For each  $X\in\T_c^-$, we shall construct a sequence 
\[\xymatrix{
X_0\ar[r]^{\alpha_0}\ar[rd]_{\eta_0} & X_1\ar[r]^{\alpha_1}\ar[d]^{\eta_1} & X_2\ar[r]\ar[ld]^{\eta_2} & \cdots\\
& X
}\]
such that the following conditions are satisfied:
\begin{enumerate}
  \item Each $X_i$ is obtained by finitely many times of extensions by objects  of the form 
  \[\coprod_{m\geq d}M'_m[m],\]
where $M'_m\in\add(M)$ and $d\in\mathbb{Z}$. 
\item There is a strictly descending function $\theta$ such that $\cone(\alpha_n)\in\T_{G}^{\leq \theta(n)}$ for all $n\geq 0$. 
\item $\eta_{i+1}\alpha_i=\eta_i$ for all $i\geq 0$;
	\item 
	For each integer $n$ and each $i\geq 0$, the map
	\[\Hom(M[n], X_i)\lraf{(-,\eta_i)} \Hom(M[n], X)\]
	is surjective;		
	\item Let $\eta: \hocolim M_i\lra X$ be the induced map. Then the map 
	\[(-,\eta): \Hom(M[n], \hocolim X_i)\lra \Hom(M[n], X)\]
	is an isomorphism for all integers $n$.
	\item $\hocolim X_i\in \T_c^-$.
\end{enumerate}
Suppose that $X\in\T_G^{\leq n}$. Then $\Hom(M[k],X)=0$ for all $k<n+r$. We can define 
\[\min(X):=\min\{m\mid \Hom(M[m], X)\neq 0\}\]
For each $k\geq \min(X)$, let $B_{X,k}$ be a finite generating set of $\Hom(M[k], X)$ as an $R$-module. For simplcity, we write 
\[M_{X,k}=\coprod_{b\in B_{X,k}}M[k].\]
Then there is a triangle 
\[K_0\lraf{\beta_0} \coprod_{k\geq \min(X)} M_{X,k}\lraf{f_0} X\lra K_0[1],\]
where $f_0$ is induced by those $b\in B_{X,k}$, $k\geq n+r$. By construction,  $\Hom(M[m], f)$ surjective for integers $m$. Here $B_{X,k}$ can be chosen to be finite follows from  Lemma \ref{lem-noeth-fin}.  
Note that the object 
\[X_0:= \coprod_{k\geq \min(X)}M_{X,k}\]
belongs to $\T_c^-$ since it is the homotopy colimit of the Cauchy sequence 
\[B_{X,\min(X)}\lra \cdots \lra \coprod_{k=\min(X)}^{t}M_{X,k}\lra \coprod_{k=\min(X)}^{t+1}M_{X,k}\lra\cdots\]
in $\T^c$. It follows that $K_0\in\T_c^-$ and we can construct a morphism 
\[Z_0=\coprod_{k\leq \min(K_0)}M_{K_0,k}\lraf{g_0} K_0\]
such that $Z_0\in\T_c^-$ and $\Hom(M[m], g_0)$ surjective for integers $m$. Let $X_1=\cone(\beta_0 g_0)$, that is, there is a triangle 
\[Z_0\lraf{\beta_0g_0} X_0\lraf{\alpha_0} X_1\lra Z_0[1].\]
Thus, inductively, for each $i\geq 0$, we can construct a commutative diagram of triangles
\[\xymatrix{
Z_i\ar[r]\ar[d]^{g_i} & X_i\ar[r]^{\alpha_i}\ar@{=}[d] & X_{i+1}\ar[r]\ar[d]^{\eta_{i+1}} & Z_i[1]\ar[d]^{g_i[1]}\\
K_i\ar[r]^{\beta_i} & X_i\ar[r]^{\eta_i} & X \ar[r] &  K_i[1]
}\]
where $g_i$ is the morphism 
\[Z_i=\coprod_{k\geq \min(K_i)}M_{K_i,k}\lra K_i\]
satisfying that $\Hom(M[m],g_i)$ is surjective for all $m\in\mathbb{Z}$. Applying $\Hom(M[m],-)$ to this diagram results in a commutative diagram with exact rows and columns. 
\[\xymatrix@C=11mm{
&& \vdots\ar[d]\\
	&& \Hom(M[m], K_{i+1})\ar[d]^{(-,\beta_{i+1})}\\
\Hom(M[m], Z_i)\ar[r]^{(-, \beta_ig_i)}\ar@{->>}[d]_{(-,g_i)} & \Hom(M[m], X_i)\ar@{=}[d]\ar[r]^{(-,\alpha_i)} & \Hom(M[m], X_{i+1})\ar[r]\ar@{->>}[d]^{(-,\eta_{i+1})} & \Hom(M[m], Z_i[1])\\
\Hom(M[m], K_i)\ar[r]_{(-,\beta_i)} & \Hom(M[m], X_i)\ar@{->>}[r]_{(-,\eta_i)} & \Hom(M[m], X)
}\]
The morphisms with double heads are surjective for all integers $m$. Assume inductively that $Z_i$ is of the form 
\[Z_i=\coprod_{k\geq \min(K_0)+i} \coprod_{b\in B_k}M[k],\]
where $B_k$ is a finite set. This is clearly true for $i=0$. Now for each $m<\min(K_0)+i+1$, by the construction of $Z_i$, we have $\Hom(M[m],Z_i[1])=0$. It follows that $(-,\alpha_i)$ is surjective for such $m$. In the above diagram 
$$\Ker (-,\eta_i)=\Img(-,\beta_i)=\Img(-,\beta_ig_i)=\Ker(-,\alpha_i).$$
Hence $(-,\eta_{i+1})$ is an isomorphism for $m<\min(K_0)+i+1$. The vertical exact sequence the the above diagram then implies that $\Hom(M[m],K_{i+1})=0$ for all $m<\min(K_0)+i+1$, that is, $\min(K_{i+1})\geq \min(K_0)+i+1$. It follows that $Z_{i+1}$ is of the form 
\[Z_{i+1}=\coprod_{k\geq \min(K_0)+i+1}\coprod_{b\in B_k}M[k]\] 
by taking $B_k$ to be $B_{K_{i+1},k}$ which is finite. Thus we have constructed a sequence 
\[X_0\lraf{\alpha_0} X_1\lraf{\alpha_1}\cdots\quad (*)\]
such that the cone of $\alpha_n: X_n\lra X_{n+1}$ is 
\[Z_n[1]=\coprod_{k\leq \min(K_0)+n}M_{K_n,k}.\]
Note that $M\in\T_G^{\leq r}$, and $M[k]\in\T_G^{\leq r-k}$  
for all integers $k$. It follows that  
\[Z_n[1]\in\T_G^{\leq r-(\min(K_0)+n+1)}\] 
By Lemma \ref{lem-big-Cauchyseq}, we deduce that  $\hocolim X_n\in\T_c^-$.

Let $\eta: \hocolim X_n\lra X$ be the induced map. For each integer  $k$, we have the following commutative diagram with exact rows and columns. 
\[\xymatrix{
	& 0\ar[d] & 0\ar[d]\\
 & \coprod \Img (-,\beta_n)\ar[d] \ar[r]^{1-0}_{\simeq} & \coprod \Img (-,\beta_n)\ar[d]\\
 0\ar[r] & \coprod \Hom(M[k], X_n)\ar[r]^{1-\alpha_*}\ar[d]^{(-,\eta_n)} & \coprod \Hom(M[k], X_n)\ar[r]\ar[d]^{(-,\eta_n)} & \colim \Hom(M[k], X_n)\ar[r]\ar[d]^{(-,\eta)} & 0\\
 0\ar[r] & \coprod \Hom(M[k], X)\ar[d]\ar[r]^{1-{\rm shift}} & \coprod\Hom(M[k], X)\ar[d]\ar[r] & \Hom(M[k],X)\ar[r] & 0\\
 & 0 & 0
}\]
It follows that $(-,\eta)$ is an isomnorphism. In the triangle 
\[\hocolim X_n\lraf{\eta} X\lra \cone(\eta)\lra \hocolim X_n[1],\] 
$\Hom(M[k], \cone(\eta))=0$ for all integers $k$. Since both $\hocolim X_n$ and $X$ belong to $\T_c^-$, one has $\cone(\eta)\in\T_c^-$. By our assumption, we have $\cone(\eta)\simeq 0$, and thus $X\simeq \hocolim M_n$.

 Now let $C\in\T^c$, and let $X_n$ be as we have constructed above. Since $\cone(\eta)=0$, we have $C\simeq \hocolim X_n$. Applying $(C, -)$ to the triangle
\[\coprod X_n\lraf{1-\alpha} \coprod X_n\lraf{\theta} X\lra \coprod X_n[1]\]
results in an exact sequence
\[0\lra \Hom(C, \coprod X_n)\lraf{(C,1-\alpha)} \Hom(C, \coprod X_n)\lra \Hom(C, X)\lra 0\]
It follows that $1_C$ factors through finitely many $X_n$. We claim that every morphism from $C$ to $X_n$ factorizes through an object in $\add M[\mathbb{Z}]$. This is clear if $n=0$ since $X_0$ is a direct sum of objects in $M[\mathbb{Z}]$. Assume that $n>0$ and the claim holds for $X_{n-1}$. Consider the commutative diagram of triangles.
\[\xymatrix{
M'[-1]\ar[r]\ar[d]&C'\ar[r]^{u}\ar[d]^{v}& C\ar[d]^{f}\ar[r]^{g} & M'\ar[d]^{h}\\
Z_{n-1}\ar[r]& X_{n-1}\ar[r]^{\alpha_{n-1}}  & X_n\ar[r]  & Z_{n-1}[1] \\
}\]
The diagram is constructed as follows. Since $C\in\T^c$ and $Z_{n-1}[1]$ is a direct sum of objects in $M[\mathbb{Z}]$, the morphism from $C$ to $Z_{n-1}[1]$ factorizes through a finite direct sum of objects in $M[\mathbb{Z}]$. This gives rise to the morphisms $g$ and $h$ with $M'$ being a finite direct sum of objects in $M[\mathbb{Z}]$. Then we can complete $g$ to a morphism of triangles as above. Since $C'$ is an extension of $C$ by an object in $\add M[\mathbb{Z}]$, and $C$ is an object in $\T^c$, we have $C'\in\T^c$. By the induction hypothesis, every morphism from $C'$ to $X_{n-1}$ factorizes through an object in $\add M[\mathbb{Z}]$, say $v=\eta\theta$ with $\theta: C'\lra M''$ and $\eta: M''\lra X_{n-1}$, where $M''\in\thick(M)$. Complete $\theta$ into a triangle $M''[-1]\lra N\lraf{\xi} C'\lraf{\theta} M''\lra N[1]$. The Octahedral axiom then gives rise to a triangle $M'[-1]\lra N'\lraf{\xi'} M''\lraf{\theta'} M'$, where $N'$ is the cone of $u\xi$, which is in $\thick(M)$ since both $M'$ and $M''$ are in $\thick(M)$. Now 
\[fu\xi=\alpha_{n-1}v\xi=\alpha_{n-1}\eta\theta\xi=0.\]
It follows that $f$ factorizes through $N'$, which is in $\thick(M)$. 

Thus, we have proved that every morphism from $C$ to $M_n$ factorizes through $\thick(M)$. Recall that $1_C$ factorizes through the direct sum of finitely many $X_n$. It follows that $1_C$ factorizes through $\thick(M)$ and consequently $C\in\thick(M)$. Hence $\T^c=\thick(M)$. 
\end{proof}

\section{ST correspondences}

In the theory of triangulated categories, there are well-known correspondences between silting objects, $t$-structures and simple minded collections in varies situations (\cite{Aihara2012aa,Keller2013,Hernandez2013}). In \cite{Koenig2014aa}, Koenig and Yang established bijections between basic silting objects in $\Kb[\projcat{A}]$, co-$t$-structures on $\Kb[\projcat{A}]$, algebraic $t$-structures on $\Db[\modcat{A}]$ and simple minded collections in $\Db[\modcat{A}]$ for a finite-dimensional algebra $A$. This result was generalized to locally finite non-positive dg algebras ([\cite{Su2019,Fushimi2025}]), and the classical case is recovered by taking $\T=\D[\Modcat{A}]$, with $\T^c = \Kb[\projcat{A}]$ and $\T_c^b=\Db[\modcat{A}]$. It is natural to ask whether such bijections hold for $\T^c$ and $\T_c^b$ in our purely metric setting.

In this section, we assume that   $k$ is a field and  $\T$ is a compactly generated triangulated $k$-cateogry with small coproduct. Suppose that $\T^c$ has a silting object $G$ such that  $\Hom(G,G[n])$ is finite dimensional for all $n$. Consider the good metric on $\T^c$ induced by $G$, and let $\T_c^-$ and $\T_c^b$ respectively be the precompletion and completion with respect to this metric.

\begin{Lem}\label{lem-hom-fin}
Keep the notations above. The Hom-space $\Hom(X, Y)$ is finite dimensional over $k$ for all objects $X\in\T_c^-$ and $Y\in\T_c^b$. 
\end{Lem}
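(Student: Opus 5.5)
The plan is to reduce to Hom-spaces between compact objects, which are finite dimensional, and to do the reduction in two steps: first the second variable, then the first.

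\textbf{Step 1: compact source.} We first show that $\Hom(C,Y)$ is finite dimensional for $C\in\T^c$ and $Y\in\T_c^-$. This is exactly the argument of Lemma \ref{lem-noeth-fin} with $R=k$. Choose $r$ with $C\in\add(G[-r]*\cdots*G[r])$, and a triangle $C'\to Y\to Y''\to C'[1]$ with $C'\in\T^c$ and $Y''\in\T_G^{\leq -r-2}$. This gives $\Hom(C,Y)\simeq\Hom(C,C')$. The latter is finite dimensional, since $C,C'$ lie in finite extensions of shifts of $G$ and each $\Hom(G,G[n])$ is finite dimensional. In particular, this applies whenever $Y\in\T_c^b\subseteq\T_c^-$.

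\textbf{Step 2: general source.} Now let $X\in\T_c^-$ and $Y\in\T_c^b$. Since $Y\in\T_c^b$, there is an integer $l$ with $Y\in\T_G^{\geq l}$, so $\Hom(Z,Y)=0$ and $\Hom(Z[-1],Y)=0$ for every $Z\in\T_G^{\leq l-2}$. By the definition of $\T_c^-$, pick a triangle $C\to X\to Z\to C[1]$ with $C\in\T^c$ and $Z\in\T_G^{\leq l-2}$. Applying $\Hom(-,Y)$ gives the exact sequence
\[\Hom(Z,Y)\lra\Hom(X,Y)\lra\Hom(C,Y).\]
The left term vanishes, so $\Hom(X,Y)$ embeds into $\Hom(C,Y)$, which is finite dimensional by Step 1. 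This completes the argument.

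\textbf{Expected obstacle.} The only point needing care is the orthogonality $\Hom(\T_G^{\leq m},\T_G^{\geq m+1})=0$, which is just the $t$-structure axiom for $(\T_G^{\leq 0},\T_G^{\geq 0})$, together with checking that the shift indices are chosen consistently. The argument otherwise closely mirrors Lemma \ref{lem-noeth-fin}, so I expect no real difficulty.
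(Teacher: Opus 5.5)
Your proof is correct and follows essentially the same route as the paper: it takes the same triangle $C\to X\to Z\to C[1]$ with $C\in\T^c$ and $Z\in\T_G^{\leq m-2}$ for $Y\in\T_G^{\geq m}$, and cites Lemma \ref{lem-noeth-fin} (your Step 1) for $\Hom(C,Y)$. The only difference is that the paper also uses $\Hom(Z[-1],Y)=0$ to get an isomorphism $\Hom(X,Y)\simeq\Hom(C,Y)$, whereas you use just the injection, which is all that finite-dimensionality requires.
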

\begin{proof}
We can assume that $Y\in\T_{G}^{\geq m}$ for some integer $m$. By definition, there is a triangle $C\ra X\ra Z\ra C[1]$ with $C\in\T^c$ and $Z\in\T_{G}^{\leq m-2}$. Applying $\Hom(-,Y)$ to this triangle, one gets an exact sequence 
\[\Hom(Z,Y)\lra\Hom(X,Y)\lra \Hom(C,Y)\lra \Hom(Z[-1],Y).\] 
The condition $Z\in \T_{G}^{\leq m-2}$ implies that   $Z,Z[-1]\in \T_G^{\leq m-1}$. Hence $\Hom(Z,Y)=0=\Hom(Z[-1],Y)$. Thus $\Hom(X,Y)$ is isomorphic to $\Hom(C,Y)$ which is finite dimensional by Lemma \ref{lem-noeth-fin}. 
\end{proof}

\begin{Def}\label{def-SMC}
Let $\C$ be a triangulated caetgory over a field $k$ such that $\Hom_{\C}(X,Y)$ is finite dimensional for all $X, Y\in \C$. A collection $\mathcal{X}=\{X_1,X_2,\cdots,X_r\}$ of objects in $\C$ is called a {\em simple minded collection}, if   $\thick(\mathcal{X})=\C$ and there are division algebras $R_1,R_2,\cdots,R_r$ such that 
\[\Hom_{\C}(X_i,X_j[m])\simeq \begin{cases}
{R_i}, & i=j,m=0;\\
0, & m<0 \mbox{ or }m=0,i\neq j.	
\end{cases}\] 
\end{Def}
For instance, the simple modules over a finite dimensional aglebra $A$ form a simple minded collection in $\Db[\modcat{A}]$. Starting from a simple minded collection in $\Db[\modcat{A}]$ where $A$ is a finite dimensional symmetric aglebra, Rickard constructed in \cite{Rickard2002aa} a tilting object in $\Kb[\projcat{A}]$. If $A$ is not necessarily symmetric, Rickard's method actually gives an object $I\in\Kb[\injcat{A}]$
such that $\nu_A^-I$ is a silting object in $\Kb[\projcat{A}]$. In our setting, we do not have the Nakayama functor $\nu_A$. However, the construction still works. 

\begin{Lem}\label{lem-smc2cosilting}
Let $X_1,X_2,\cdots,X_r$ be a simple minded collection in $\T_c^b$. Then there are objects $I_1,I_2,\cdots, I_r$ in $\T$ such that 
\[\Hom(X_j, I_i[m])\simeq \begin{cases}
	{R_i}_{R_i}, & i=j, m=0,\\
	0, & \mbox{else},
\end{cases}\]
where $R_i$ is the endomorphism algebra of $X_i$.  Moreover, for each $i=1,2,\cdots,r$, the space $\coprod_{k\in\mathbb{Z}}\Hom(X[m], I_i)$ is finite dimensional for all $X\in\T_c^b$. 
\end{Lem}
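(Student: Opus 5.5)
Following Rickard's construction, I will obtain each $I_i$ by Brown representability applied to a suitable cohomological functor on $\T$. Fix $i$. Consider the contravariant functor on $\T^c$
\[F_i:=\Hom_{R_i}\bigl(\Hom(-,X_i),R_i\bigr)\]
restricted to compact objects; more precisely I would use the $k$-dual $D\Hom(-,X_i)$ and then cut out the $R_i$-part. For $C\in\T^c$ and $X_i\in\T_c^b$, Lemma \ref{lem-hom-fin} gives that $\Hom(C,X_i)$ is finite dimensional. Hence $D\Hom(-,X_i)=\Hom_k(\Hom(-,X_i),k)$ is a homological functor on $\T^c$, and the functor $C\mapsto D\Hom(C,X_i)$ is cohomological on $(\T^c)^{\rm op}$. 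Since $\T$ is compactly generated, Brown representability (in the Krause/Neeman form for functors on $\T^c$ extended to $\T$ via $\Hom(-,\coprod)$) yields an object $I'_i\in\T$ with
\[\Hom(C,I'_i)\simeq D\Hom(X_i,C)\]
for compact $C$. That is, $I'_i$ represents the Serre-dual functor. Concretely, I first form $H(Y)=D\Hom(X_i,Y)$. Because $X_i\in\T_c^-$ and the objects are bounded, Lemma \ref{lem-tcf-coprod} lets $H$ send coproducts (of uniformly bounded-below families) to products. Brown representability then gives $H\simeq\Hom(-,I'_i)$ on $\T$.

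Next I would verify the orthogonality. The identity $\Hom(X_j,I'_i[m])\simeq D\Hom(X_i,X_j[-m])$ needs to be extended from compact $C$ to $X_j\in\T_c^b$. I would do this by writing $X_j=\hocolim E_n$ for a Cauchy sequence of compacts (Proposition \ref{prop-silt-completion}) and using that $\Hom(E_n,X_i[\cdot])$ stabilizes, since $X_i$ is bounded. The Milnor sequence then has vanishing $\lim^1$, because the systems are eventually constant and finite dimensional. By the simple-minded axioms, $D\Hom(X_i,X_j[-m])$ vanishes for $m>0$, and for $m=0$ with $i\ne j$. For $m<0$ these groups need not vanish. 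So $I'_i$ alone does not suffice: I must replace it by an object $I_i$ killing the positive-self-extension part. This is Rickard's iterative construction. Starting with $I_i^{(0)}=I'_i$, I repeatedly kill nonzero maps $X_j[-m]\to I^{(n)}$ ($m<0$) by taking cones of universal maps from finite sums of shifts of the $X_j$. Each step is finite because of Hom-finiteness (Lemma \ref{lem-hom-fin}). I then take the homotopy colimit $I_i=\hocolim I_i^{(n)}$. The shifts involved go to $+\infty$, so this is where the compactly generated and coproduct hypotheses are used.

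The main obstacle is controlling this homotopy colimit. I must show that $\Hom(X_j[m],-)$ commutes with it for $X_j\in\T_c^b$, which is not compact. Here I would use Lemma \ref{lem-tcf-coprod}: the added cones lie in $\T_{\G}^{\geq l_n}$ with $l_n\to-\infty$ in the appropriate direction, so each $X_j[m]$ only sees finitely many stages. Finally, the finiteness claim that $\coprod_m\Hom(X[m],I_i)$ is finite dimensional for $X\in\T_c^b$ follows by dévissage. Since $\thick(X_1,\dots,X_r)=\T_c^b$, I induct on the number of extensions: for each $X_j$ the total space is $R_i$ or $0$, and exactness of $\Hom(-,I_i)$ on triangles preserves the finiteness of the total graded dimension.
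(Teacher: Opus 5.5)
Your starting object is where the argument breaks. Brown representability cannot be applied to $H=D\Hom(X_i,-)$ on all of $\T$. A representable functor turns \emph{every} coproduct into a product, but $X_i$ is not compact, and Lemma \ref{lem-tcf-coprod} only handles families lying in one fixed $\T_G^{\geq l}$. There is also a variance slip: $D\Hom(-,X_i)$ is covariant, and Lemma \ref{lem-hom-fin} gives finiteness of $\Hom(C,X_i)$, not of $\Hom(X_i,C)$, because $C\in\T^c$ need not lie in $\T_c^b$.

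Suppose you only represent $D\Hom(X_i,-)$ on $\T^c$. Then the formula $\Hom(X_j,I'_i[m])\simeq D\Hom(X_i,X_j[-m])$ does not extend to $X_j\in\T_c^b$. The Milnor argument needs $\colim_n\Hom(X_i,E_n)\simeq\Hom(X_i,\hocolim E_n)$, and this fails for non-compact $X_i$. The stabilization you quote concerns $\Hom(E_n,X_i)$, which is the wrong slot.

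Here is a concrete counterexample. Let $A=k[x]/(x^2)$, $\T=\mathcal{D}(\Modcat{A})$, $G=A$, and take the simple-minded collection $\{S\}$.
\begin{enumerate}
\item Since $A$ is symmetric, $D\Hom(S,C)\simeq\Hom(C,S)$ naturally for perfect $C$, so $I'=S$ is a legitimate choice of representing object.
\item Yet $\Hom(S,S[1])=\Ext^1_A(S,S)=k$, while $D\Hom(S,S[-1])=0$.
\item So the real defects of $I'$ are in positive degrees, not negative ones. Your procedure cones off maps $S[s]\to I'$ with $s>0$, which are all zero here. It therefore does nothing and returns $I=S$, which violates the lemma. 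By contrast, $A$, the injective envelope of $S$, satisfies it.
\end{enumerate}

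The killing step is also oriented the wrong way for the tools you have. Coning off maps from $X_j[s]$ with $s>0$ adds objects in $\T_G^{\geq l-s}$, which have no uniform lower bound. It can also change degree $0$, because $\Hom(X_j,X_k[s])$ may be nonzero for $s>0$. With your $l_n\to-\infty$, neither $\Hom(X_j,\coprod X_k[s])\simeq\coprod\Hom(X_j,X_k[s])$ nor $\Hom(X_j,\hocolim I^{(n)})\simeq\colim\Hom(X_j,I^{(n)})$ is available. The claim that ``each $X_j[m]$ only sees finitely many stages'' is exactly what fails for non-compact $X_j$.

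The missing idea is that no representability is needed at this stage; this is the paper's route.
\begin{enumerate}
\item Start from $X_i^{(0)}=X_i$ itself. It already has $\Hom(X_j,X_i[m])=0$ for $m<0$ and $\delta_{ij}R_i$ for $m=0$.
\item Cone off, along an $R_j$-basis, all maps $X_j[t]\to X_i^{(n)}$ with $t<0$.
\item Every $Z_i^{(n)}$ and $X_i^{(n)}$ then stays in a fixed $\T_G^{\geq l}$. So Lemma \ref{lem-tcf-coprod} gives $\Hom(Y,\hocolim X_i^{(n)})\simeq\colim\Hom(Y,X_i^{(n)})$ for $Y\in\T_c^-$.
\item The simple-minded vanishing gives $\Hom(X_j,Z_i^{(n)}[m])=0$ for $m\leq 0$, so degrees $m<0$ are unchanged.
\item Degree $0$ is unchanged because the basis choice makes $\Hom(X_j[-1],Z_i^{(n)})\to\Hom(X_j[-1],X_i^{(n)})$ injective.
\item Positive-degree maps die in the colimit.
\end{enumerate}
Your closing d\'evissage for the finiteness statement is fine and agrees with the paper.
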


\begin{proof}
We write $\mathcal{X}=\{X_1,X_2,\cdots,X_r\}$.  Clearly $\mathcal{X}\subseteq  \T_G^+$ and we assume that $\mathcal{X}\subseteq \T_G^{\geq l}$. 

Let us recall Rickard's construction.  For each $i=1,2,\cdots,r$, let $X_i^{(0)}=X_i$ and inductively define $X_i^{(n+1)}$ by the triangle
\[Z_i^{(n)}\lraf{\alpha_i^{(n)}} X_i^{(n)}\lraf{\beta_i^{(n)}} X_i^{(n+1)}\lra Z_i^{(n)}[1]\]
where 
\[Z_i^{(n)}(j,t)=\coprod_{B_i^{(n)}(j,t)} X_j[t], \mbox{ and }Z_i^{(n)}=\coprod_{1\leq j\leq r,t<0} Z_i^{(n)}(j,t),\]
$B_i^{(n)}(j,t)$ is a basis of $\Hom(X_j[t], X_i^{(n)})$ as vector space over $R_j$. Each $b\in B_i^{(n)}(j,t)$ is a morphism $b: X_j[t]\to X_i^{(n)}$,  and $\alpha_i^{(n)}$ is defined by the universal property of coproducts. In this construction, the following properties are satisfied. 
\begin{enumerate}
	\item $\Hom(X_j[t],\alpha_i^{(n)})$ is surjective for all $j=1,2,\cdots, r$ and $t<0$. 
	\item $\Hom(X_i[-1],\alpha_i^{(n)})$ is an isomnorphism. 
\end{enumerate}
The statement (1) is obvious from the construction. For $t<0$, $\Hom(X_i[-1], X_j[t])\neq 0$ unless $t=-1$ and $i=j$.  
Hence, by Lemma \ref{lem-tcf-coprod}, we have
\[\Hom(X_i[-1], Z_i^{(n)})\simeq\coprod_{1\leq j\leq r, t<0}\Hom(X_i[-1],Z_i^{(n)}(j,t))=\Hom(X_i[-1],\coprod_{B_i^{(n)}(i,-1)}X_i[-1])\]
If $\alpha_i^{(n)}\theta=0$ for some $\theta\in \Hom(X_i[-1], Z_i^{(n)})$. Then, for each $b\in B_i^{(n)}(i,-1)$, the $b$ component $\theta_{b}$ of $\theta$ satisfies $\alpha_i^{(n)}\theta_{b}=br_b$ for some $r_{b}\in R_i$, since $\Hom(X_i[-1], X_i[-1])\simeq R_i$. It follows that $r_{b}=0$ for all $b\in B_i^{(n)}(i,-1)$, and hence $\theta=0$. Thus (2) follows.  Set 
\[I_i=\hocolim X_i^{(n)}\]
for all $i=1,\cdots,r$. 

We claim that, for each $Y\in\T_c^-$, one has 
$\Hom(Y,I_i)\simeq \colim \Hom(Y,X_i^{(n)})$. Actually, Since $\mathcal{X}\subseteq \T_G^{\geq l}$, we have $Z_i^{(n)}\in\T_G^{\geq l}$ for all $i=1,\cdots,r$ and $n\geq 0$. Together with $X_i^{(0)}=X_i\in\T_G^{\geq l}$, inductively, we have $X_i^{(n)}\in\T_G^{\geq l}$ for all $i=1,\cdots,r$ and $n\geq 0$. Then it follows from Lemma \ref{lem-tcf-coprod} that 
\[\Hom(Y,I_i)\simeq \colim \Hom(Y,X_i^{(n)}).\]
Since $\Hom(X_j,Z_i^{(n)}[m])=0$ for all $m\leq 0$, applying $\Hom(X_j, -)$ to the triangle 
\[Z_i^{(n)}\lraf{\alpha_i^{(n)}} X_i^{(n)}\lraf{\beta_i^{(n)}} X_i^{(n+1)}\lra Z_i^{(n)}[1]\]
gives an isomorphism  $(X_j, \beta_i^{(n)}[m])$
for all $m<0$. It follows that 
\[\Hom(X_j, I_i[m])\simeq \colim \Hom(X_j, X_i^{(n)}[m])=0\]
for all $m<0$.  

The property (2) above implies that $\Hom(X_j, \beta_i^{(n)})$ is surjective for all $n$. If $i\neq j$, it follows that $\Hom(X_j, X_i^{(n)})=0$ for all $n$. Thus $\Hom(X_j, I_i)=0$ for all $j\neq i$. For $i=j$, we have $\Hom(X_i, X_i^{(0)})\simeq {R_i}_{R_i}$, the surjectivity of $\Hom(X_i, \beta_i^{(n)})$ implies that $\Hom(X_i, X_i^{(n)})\simeq {R_i}_{R_i}$ for all $n$. Hence $\Hom(X_i, I_i)={R_i}_{R_i}$. 

For $m>0$, the construction implies that $\Hom(X_j, \beta_i^{(n)}[m])=0$ for all $n$. Thus we deduce that  $\Hom(X_j, I_i[m])=0$ for all $m>0$.

Hence $\coprod_{k\in\mathbb{Z}}\Hom(X_j[m],I_i)$ is finite dimensional for all $i,j=1,2,\cdots,r$. Since $\thick(\mathcal{X})=\T_c^b$, for each $X\in\T_c^b$, the space $\coprod_{k\in\mathbb{Z}}\Hom(X[m],I_i)$ is finite dimensional for all $i$.
\end{proof}

Although there is no Nakayama functor in our setting, the representability theorem under certain additional condition recently given by Neeman can play the role. Applying \cite[Theorem 1.4]{Neeman2026aa} to our setting gives the following lemma. 
\begin{Lem}\label{lem-representability}
$(1)$ The image of the functor 
\[\T_c^b\lra Fun([\T^c]\opp,\Modcat{k}),\quad  X\mapsto \Hom(-, X)\]
consists of functors $F$ such that $\coprod_{m\in\mathbb{Z}}F(C[m])$ is finite dimensional for all $C\in\T^c$. 

$(2)$ Suppose that there is an object $G'\in\T_c^b$ and a positive integer $N$ such that $\T=\overline{\langle G'\rangle}_N$. Then the image of the functor $[\T^c]\opp\lra Fun(\T_c^b,\Modcat{k})$ sending $C$ to $\Hom(C,-)$ consists of functors $F$ such that $\coprod_{m\in\mathbb{Z}}F(X[m])$ is finite dimensional for all $X\in\T_c^b$. 
\end{Lem}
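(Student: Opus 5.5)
The plan is to deduce both parts from \cite[Theorem 1.4]{Neeman2026aa}, which is a representability theorem for weakly approximable (or $\G$-approximable) triangulated categories. The work consists of checking its hypotheses in our setting and translating its conclusion into the language of $\T_c^-$ and $\T_c^b$ introduced in Proposition \ref{prop-silt-completion}.

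First I will check that $\T$, with the silting compact generator $G$, fits Neeman's framework. Since $\G=\add(G)$ is silting in $\T^c$, the $t$-structure $(\T_G^{\leq 0},\T_G^{\geq 0})$ of Section \ref{sect-restr} is in the preferred equivalence class. We have $G\in\T_G^{\leq 0}$ and $\Hom(G,\T_G^{\leq -1})=0$. Each $X\in\T_G^{\leq 0}$ is a homotopy colimit of objects of $\overline{\langle G\rangle}^{(-\infty,0]}$. So $\T$ is (weakly) approximable, and the required approximation triangles are essentially the ones built in the proof of Proposition \ref{prop-silt-completion}. By Proposition \ref{prop-silt-completion}, Neeman's $\T^-_c$ and $\T^b_c$ agree with ours.

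For $k$-linearity, Neeman's hypothesis is that $\Hom(G,G[n])$ is finite dimensional for all $n$. This holds by assumption, and Lemma \ref{lem-noeth-fin} (with $R=k$) extends it to $\Hom(C,X)$ for $C\in\T^c$ and $X\in\T_c^-$.

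For (1), the first half of \cite[Theorem 1.4]{Neeman2026aa} states that the restricted Yoneda functor $\T_c^b\to Fun([\T^c]\opp,\Modcat{k})$ is fully faithful. Its essential image is the set of locally finite homological functors, that is, those $F$ with $\coprod_m F(C[m])$ finite dimensional for all $C\in\T^c$. To see that the image lies in this set, I will argue directly. For $X\in\T_c^b\subseteq\T_G^{\geq l}$, only finitely many $\Hom(C[m],X)$ are nonzero, because $C\in\T_G^{\leq r}$. Each of these is finite dimensional by Lemma \ref{lem-hom-fin}.

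For (2), the second half of Neeman's theorem needs the extra hypothesis that $\T=\overline{\langle G'\rangle}_N$ for some $G'\in\T^b_c$. It then states that $\Hom(C,-)|_{\T_c^b}$ gives an equivalence from $[\T^c]\opp$ onto the locally finite cohomological functors on $\T_c^b$. The necessity direction again follows from Lemma \ref{lem-hom-fin} together with boundedness.

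The main obstacle is the hypothesis check: showing that $\T$ with a silting (rather than merely compact generating) $G$ really satisfies Neeman's approximability axioms. Concretely, I must show that every $X\in\T_G^{\leq 0}$ fits into a triangle $E\to X\to D$ with $E\in\overline{\langle G\rangle}^{[-A,A]}_A$ and $D\in\T_G^{\leq -1}$, for a uniform constant $A$. I will try the slice construction of Proposition \ref{prop-slice-and-stack-silting}, replacing right $\G$-approximations by coproducts of copies of $G$ so that it works in the big category $\T$. A single step of that construction gives such a triangle with $A=1$, since $\Hom(G,-)$ is computed by $H^0$ and every object of $\Add(G)$ lies in the heart-adjacent range.
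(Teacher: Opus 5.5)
Your proposal is correct and follows the paper, which proves this lemma only by invoking \cite[Theorem 1.4]{Neeman2026aa}. Your added check that $\T$ is approximable with $A=1$ is the right one: the cone of $\coprod_{\Hom(G,X)}G\to X$ lies in $\T_G^{\leq -1}$ by compactness of $G$ and $\Hom(G,G[n])=0$ for $n>0$. Your earlier ``homotopy colimit'' sentence, by contrast, proves nothing on its own.

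Two small points. You are right to restrict to homological (resp.\ cohomological) functors, which the statement as printed omits. Also, to get the vanishing of $\Hom(C[m],X)$ for $m\ll 0$ you need $X\in\T_G^{\leq n}$ for some $n$, not only $C\in\T_G^{\leq r}$.
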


Note that the assumption in (2) is satisfied if $\T=\D[A]$ is the derived category of a non-positive dg algebra $A$ such that  $\coprod_{m\in\mathbb{Z}}H^m(A)$ is finite dimensional(see \cite[Lemma 5.2]{Goodbody2025}). 

\begin{Prop}\label{prop-SMC2silting}
Suppose that there is some $G'\in\T_c^b$ and $N>0$ such that $\T=\overline{\langle G'\rangle}_N$. For a simple minded collection $X_1,X_2,\cdots,X_r$ in $\T_c^b$, there are  objects $T_1,T_2,\cdots, T_r$ in $\T^c$ such that 
\[\Hom(T_j,X_i[m])\simeq \begin{cases}
	{}_{R_i}R_i, & i=j, m=0,\\
	0, & \mbox{else},
\end{cases}\]
and that $T=T_1\oplus T_2\oplus\cdots \oplus T_r$ is a silting object in $\T^c$. 
\end{Prop}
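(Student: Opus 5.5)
The plan is to combine Lemma \ref{lem-smc2cosilting} with the representability result Lemma \ref{lem-representability}(2). Lemma \ref{lem-smc2cosilting} supplies objects $I_i\in\T$ whose restricted functors $F_i:=\Hom(-,I_i)|_{\T_c^b}$ satisfy $\coprod_m F_i(X[m])<\infty$ for every $X\in\T_c^b$. What we need, however, is a \emph{covariant} functor on $\T_c^b$ of the dual shape. So we take $k$-duals and set
\[F_i':=D\Hom(-,I_i)|_{\T_c^b}:\T_c^b\lra\Modcat{k},\qquad D=\Hom_k(-,k).\]
This is a covariant homological functor on $\T_c^b$, and $\coprod_m F_i'(X[m])$ is finite dimensional for all $X\in\T_c^b$. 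By Lemma \ref{lem-representability}(2), which uses the hypothesis $\T=\overline{\langle G'\rangle}_N$, there are objects $T_i\in\T^c$ with $\Hom(T_i,-)|_{\T_c^b}\simeq D\Hom(-,I_i)|_{\T_c^b}$. Evaluating at $X_j[m]$ and using the computation of $\Hom(X_j,I_i[m])$ in Lemma \ref{lem-smc2cosilting} then gives $\Hom(T_i,X_j[m])=0$ unless $i=j$ and $m=0$, and $\Hom(T_i,X_i)\simeq D(R_i)$. Since $R_i$ is a finite-dimensional division algebra, $D(R_i)$ is one-dimensional over $R_i$ on the appropriate side, i.e.\ $\simeq {}_{R_i}R_i$. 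One has to keep track of which side the $R_i$-action is on, and this is routine.

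Next, set $T=\bigoplus T_i$. To show $T$ is presilting, I would use the boundedness of the completion: $T\in\T^c\subseteq\T_c^-$, so every $Y\in\T_c^b$ has a finite "$\mathcal{X}$-filtration" because $\thick(\mathcal{X})=\T_c^b$. The natural route is to show that the $X_i$ lie in the heart of a bounded $t$-structure on $\T_c^b$ whose aisle is the extension closure of $\{X_j[m]\mid m\geq0\}$. This is the standard Koenig--Yang / Al-Nofayee argument, and it only uses the Hom-vanishing of the simple-minded collection, Hom-finiteness (Lemma \ref{lem-hom-fin}), and that $\mathcal{X}$ generates. Then $\Hom(T,Y[m])=0$ for $Y$ in the heart and $m\neq0$, by dévissage on the finite length filtration of heart objects by the $X_i$. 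To get $\Hom(T,T[m])=0$ for $m>0$, I would realize $T$ itself inside $\T_c^b$ if possible. Since $T$ is compact and not a priori in $\T_c^b$, I would argue differently: $\Hom(T,T[m])$ is controlled by the Yoneda functor on $\T_c^b$. Concretely, by Lemma \ref{lem-representability}(2) the functor $C\mapsto\Hom(C,-)|_{\T_c^b}$ is fully faithful on $\T^c$ (this is part of the representability package). Hence $\Hom(T,T[m])\simeq$ natural transformations $\Hom(T[m],-)|_{\T_c^b}\to\Hom(T,-)|_{\T_c^b}$, i.e.\ maps $D\Hom(-,I[m])\to D\Hom(-,I)$ on $\T_c^b$. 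These vanish for $m>0$ because both functors are supported in the heart in different degrees: $F'_T$ is exact on the heart and vanishes on $\mathcal{X}[m]$ for $m\neq0$, so a transformation is determined on the heart, where the shifted one is zero.

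Finally, for generation, $\thick(T)=\T^c$: apply Theorem \ref{theo-testing} with $R=k$ and $M=T$. Every nonzero $X\in\T_c^-$ must have $\Hom(T,X[n])\neq0$ for some $n$. If $X\in\T_c^-$ is nonzero, its truncation for the restricted $t$-structure (Theorem \ref{theo-main-intro}, using that $\add G$ is contravariantly finite by Lemma \ref{lem-noeth-fin}) gives a nonzero cohomology object $H$ in $\T_c^b$. Then $H$ has some $X_i[n]$ as a composition factor, and Hom-vanishing propagates this to $\Hom(T_i,X[n'])\neq0$, taking the extremal cohomology. The main obstacle I expect is the presilting step, in particular justifying full faithfulness of $\T^c\to Fun(\T_c^b,\Modcat{k})$ and controlling degrees via the $t$-structure generated by $\mathcal{X}$. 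If this fails, I would fall back on Rickard's direct argument that $\Hom(T,T[m])\simeq D\Hom(T,I[-m])$-type dualities vanish via the $X$-filtration.
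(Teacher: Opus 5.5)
Your first step (representing $D\Hom(-,I_i)|_{\T_c^b}$ by $T_i\in\T^c$ and reading off $\Hom(T_i,X_j[m])$) is the same as the paper's. The genuine gap is the generation step.

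You only have $\Hom(T_i,-)\simeq D\Hom(-,I_i)$ on $\T_c^b$. So for $0\neq X\in\T_c^-\setminus\T_c^b$ you must produce a nonzero map from the compact object $T$ into some $X[n]$. The truncation $X^{<t}\to X\to X^{\ge t}$ points the wrong way for this: an element of $\Hom(T,X^{\ge t}[n])$ lifts to $X[n]$ only if $\Hom(T,X^{<t}[n+1])=0$. Write $T\in\add(G[-r]*\cdots*G[r])$. Nonvanishing of $\Hom(T,X^{\ge t}[n])$ only forces $n\ge t-r$, whereas the lift needs $n\ge t+r-1$. Hence a nonzero cohomology object of $X$ with some $X_i[s]$ as a composition factor does not give $\Hom(T_i,X[n'])\neq0$. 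Since $X$ is unbounded below, there is also no lowest cohomology to serve as the ``extremal'' one. A covariant argument can be made to work by tracking all tails $X^{\ge t}$ as $t\to-\infty$ and comparing the $G$-$t$-structure with the one generated by $\mathcal{X}$, but your sketch contains nothing of this kind.

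The missing idea is the paper's extension of the duality to all of $\T_c^-$. Truncate with the restricted $G$-$t$-structure, which exists because $\add G$ is contravariantly finite in $\T_c^-$ by Lemma \ref{lem-noeth-fin}. For $t\ll 0$ one has $\Hom(T_i,\T_G^{\le t})=0=\Hom(\T_G^{\le t},I_i)$, since $T_i$ is compact and $I_i\in\T_G^{\ge l}$. This gives $D\Hom(T_i,X)\simeq\Hom(X,I_i)$ for all $X\in\T_c^-$.

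Generation then becomes a statement about maps \emph{out of} $X$, where truncation points the right way:
\begin{enumerate}
\item A nonzero $X\to X^{\ge t}\in\thick(\mathcal{X})$ gives $\Hom(X,X_i[s])\neq0$ for some $i,s$.
\item Take $m$ maximal such that $\Hom(X[m],X_i)\neq0$ for some $i$. Then $\Hom(X[m],Z_i^{(n)})=0$ by maximality and Lemma \ref{lem-tcf-coprod}.
\item So the maps $\Hom(X[m],X_i^{(n)})\to\Hom(X[m],X_i^{(n+1)})$ are injective, and $\Hom(X[m],I_i)\neq0$.
\end{enumerate}

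The same duality makes presilting immediate. Taking $X=T_j[m]\in\T^c\subseteq\T_c^-$ gives $\Hom(T_j[m],T_i)\simeq D\Hom(T_i,I_j[m])$. By compactness, $\Hom(T_i,I_j[m])\simeq\colim\Hom(T_i,X_j^{(n)}[m])=\Hom(T_i,X_j[m])=0$ for $m<0$.

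Your presilting route has two further problems.
\begin{enumerate}
\item It relies on full faithfulness of $[\T^c]\opp\to Fun(\T_c^b,\Modcat{k})$. Lemma \ref{lem-representability}(2) does not assert this; it only describes the image. The route also needs the Koenig--Yang $t$-structure generated by $\mathcal{X}$.
\item The claim that ``a transformation is determined on the heart'' is not a general fact. It has to be derived from naturality along the truncations $\tau^{\le -m}Y\to Y$ of that $t$-structure.
\end{enumerate}
That part is repairable, but it becomes superfluous once the duality holds on $\T_c^-$.
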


\begin{proof}
Let $I_1,I_2,\cdots,I_r$ be the objects we constructed in Lemma \ref{lem-smc2cosilting}. 
Then for each $i=1,2,\cdots,r$, $\coprod_{m\in\mathbb{Z}}D\Hom(X[m],I_i)$ is finite dimensional for all $X\in\T_c^b$, and the functor $D\Hom(-,I_i)$ is isomorphic to $\Hom(T_i, -)$ on $\T_c^b$ for some $T_i\in\T^c$ by Lemma \ref{lem-representability}, that is, there is a natural isomorphism 
\[D\Hom(-,I_i)|_{\T_c^b}\simeq\Hom(T_i,-)|_{\T_c^b}.\] 
The isomorhism can be extended to $\T_c^-$.  Since $T_i\in\T^c$, there exists a positive integer $m$ such that 
\[T_i\in\add(G[-m]*G[-m+1]*\cdots * G[m]).\]
By Lemma \ref{lem-smc2cosilting}, we can assume that $I_i\in\T_G^{\geq l}$. Choose $t$ sufficiently small such that $t<-m-1$ and $t<l$. Hence 
\[\Hom(T_i,\T_G^{\leq t})=0=\Hom(\T_G^{\leq t}, I_i)\]
Now for each $X\in\T_c^-$. Our assumption makes sure that the $t$-structure $(\T_G^{\leq 0}, \T_G^{\geq 0})$ restricts to a $t$-structure on $\T_c^-$, and there is a canonical triangle 
\[X^{<t}\lra X\lra X^{\geq t}\lra X^{<t}[1]\]
with $X^{\geq t}\in\T_c^b$. Applying $D\Hom(T_i, -)$ to this triangle gives a commutative diagram 
\[\xymatrix{
0=D\Hom(T_i, X^{<t}[1])\ar[r] & D\Hom(T_i, X^{\geq t})\ar[r] & D\Hom(T_i, X)\ar[r] & D\Hom(T_i, X^{<N})=0\\
0=\Hom(X^{<t}, I_i)\ar[r] & \Hom(X^{\geq t}, I_i)\ar[r]\ar[u]^{\simeq} &  \Hom(X, I_i)\ar[r]\ar[u]^{\simeq} &  \Hom(X^{<t}, I_i)=0
}\]
Thus we get an natural isomorphism $D\Hom(T_i, X)\simeq \Hom(X, I_i)$ for all $X\in\T_c^-$.

Next we prove that $T$ is presilting. Since $\Hom(T_j, X_i[m])=0$ for all $i,j$  and $m<0$, the space  $\Hom(T_j,Z_i^{(n)}[m])=0$
for all $n$ and $m\leq 0$, where $Z_i^{(n)}$ is the object in the proof of Lemma \ref{lem-smc2cosilting}. Applying $\Hom(T_j, -)$ to the triangle  
\[Z_i^{(n)}\lraf{\alpha_i^{(n)}} X_i^{(n)}\lraf{\beta_i^{(n)}} X_i^{(n+1)}\lra Z_i^{(n)}[1]\]
gives an isomorphism $\Hom(T_j, \beta_i^{(n)}[m])$ for all $m<0$. Hence
\[\Hom(T_j, I_i[m])\simeq \colim \Hom(T_j, X_i^{(n)}[m])=0\]
for all $m<0$. The natural isomorphism 
$\Hom(T_j[m],T_i)\simeq D\Hom(T_i, I_j[m])$
then implies that $T$ is presilting.

Finally, we show that $\thick(T)=\T^c$. Let $I=I_1\oplus I_2\oplus\cdots\oplus I_r$. By Theorem \ref{theo-testing} and the duality $\Hom(T,-)\simeq D\Hom(-,I)$, it suffices to prove that, for each $0\neq Y\in\T_c^-$, there is some $n\in\mathbb{Z}$ such that $\Hom(Y[n], I)\neq 0$.

We claim that there exists some $i=1,2,\cdots,r$ and $t\in\mathbb{Z}$ such that 
$\Hom(Y, X_i[t])\neq 0$. Suppose the contrary, that is, $\Hom(Y, X_i[t])=0$ for all $i$ and $t$. Then $\Hom(Y,U[t])=0$ for all $U\in\thick(\mathcal{X})=\T_c^b$ and all $t\in\mathbb{Z}$. Since $0\neq Y$, there exists some $t$ such that $\Hom(G, Y[t])\neq 0$. Consider the canonical  triangle 
\[Y^{<t}\lra Y\lraf{\pi} Y^{\geq t}\lra Y^{<t}[1]\]
Then $\pi$ is not zero. However our assumption implies that $Y^{\geq t}\in\T_c^b$, which is a contradiction.

Since $\Hom(Y[t], X_i)$ is zero for sufficiently large $t$, let $m$ be the largest integer such that $\Hom(Y[m], X_i)\neq 0$ for some $i$. Recall that, by construction 
\[Z_i^{(n)}, X_i^{(n)}, \hocolim X_i^{(n)}\]
are all in $\T_G^{\geq l}$ for all $i,n$. Since $Y[m]\in\T_c^-$, there is a triangle
\[C\lra Y[m]\lra W\lra C[1]\]
with $W\in\T_G^{\leq l-2}$, then $\Hom(W, \T_G^{\geq l-1})=0$. It follows that 
$\Hom(Y[m], \T_G^{\geq l})\simeq \Hom(C, \T_G^{\geq l})$. 
The exact seuqence 
\[0=\Hom(C, Z_i^{(n)})\lra \Hom(C, X_i^{(n)})\lraf{(-,\beta_i^{(n)})} \Hom(C, X_i^{(n+1)})\]
implies that $(-,\beta_i^{(n)})$ is injective for all $n$. 
Together with the fact that  $\Hom(C, X_i^{(0)})=\Hom(Y[m], X_i)\neq 0$. It follows that 
\[\Hom(Y[m], I_i)\simeq \Hom(C, \hocolim X_i^{(n)})\simeq \colim \Hom(C, X_i^{(n)})\neq 0.\]
This finishes the proof. 
\end{proof}

Now we define maps in the following diagram. 
\[\xymatrix@C=20mm{
\fbox{\parbox{.3\textwidth}{isomorphism calsses of basic silting objects in $\T^c$}} \ar[r]^{\phi}\ar[rd]^{\phi_1} & \fbox{\parbox{.25\textwidth}{bounded co-$t$-structures on $\T^c$}}\ar[d]^{\phi_4}\\
\fbox{\parbox{.3\textwidth}{equivalence classes of simple minded collections in $\T_c^b$}}\ar[u]^{\phi_{3}} & \fbox{\parbox{.25\textwidth}{bounded $t$-structures on $\T_c^b$ with length heart}}\ar[l]^{\phi_{2}}  
}\]

We now introduce mutations and natural partial orders on each of the four classes of objects in the diagram above; these structures are deeply intertwined, reflecting the tight interplay between silting theory, $t$-structures, co-$t$-structures, and simple minded collections.

\medskip 
\textbf{Silting objects.} For two basic silting objects $M, N \in \T^c$, if $\Hom(M, N[m]) = 0$ for all integers $m > 0$, then we say that $M \geq N$. This relation defines a partial order on the isomorphism classes of basic silting objects in $\T^c$ \cite{Aihara2012aa}. To define mutation, start with a basic silting object $M = M_1 \oplus M_2 \oplus \cdots \oplus M_n$ decomposed into pairwise non-isomorphic indecomposable summands. For each index $i$, let $f_i: M_i \to E$ be a minimal left $\add\left(\bigoplus_{j \neq i} M_j\right)$-approximation. Complete $f_i$ to a distinguished triangle:
\[
M_i \lraf{f_i} E \to M_i^* \to M_i[1].
\]
The \emph{left mutation} of $M$ at $M_i$ is then $\mu_i^+(M) = M_i^* \oplus \bigoplus_{j \neq i} M_j$, which remains a basic silting object. Right mutations are defined dually via right minimal approximations \cite{Aihara2012aa}.

\textbf{Simple minded collections.} We work in $\T_c^b$. For two simple minded collections $\mathcal{X}, \mathcal{X}'$, set $\mathcal{X} \leq \mathcal{X}'$ if $\Hom(X, X'[m]) = 0$ for all $X \in \mathcal{X}$, $X' \in \mathcal{X}'$, and $m < 0$. This is a partial order on simple minded collections that arise as simple objects of bounded $t$-structures with length hearts \cite[Proposition 7.9]{Koenig2014aa}. For mutation, let $\mathcal{X} = \{X_1, X_2, \dots, X_r\}$ be a simple minded collection, and write $\mathcal{X}_i$ for the extension closure of $X_i$ in $\T_c^b$. By Hom-finiteness, each $X_j[-1]$ ($j \neq i$) admits a minimal left $\mathcal{X}_i$-approximation $g_{ji} \colon X_j[-1] \to X_{ji}$. Let $X_j^* = \cone(g_{ji})$ be the cone of this morphism. The left mutation of $\mathcal{X}$ at $i$ is:
\[
\mu_i^+(\mathcal{X}) = \{X_i[1]\} \cup \{X_j^* \mid j \neq i\}.
\]
For the bounded derived category of a finite-dimensional algebra, $\mu_i^+(\mathcal{X})$ is again a simple minded collection \cite{Koenig2014aa}; we will later verify this holds in $\T_c^b$ as well. Two simple minded collections are euqivalent if they are the same up to isomorphism and ordering. 

\textbf{Bounded t-structures.} Let $t_1, t_2$ be two bounded $t$-structures on $\T_c^b$. Define $t_1 \geq t_2$ if their non-positive aisles satisfy $t_1^{\leq 0} \supseteq t_2^{\leq 0}$. Now take a bounded $t$-structure $t$ whose heart is a length category, and let $\mathcal{X} = \{X_1, \dots, X_r\}$ be the simple objects of this heart (a simple minded collection by construction). The left mutation $\mu_i^+(t)$ is the unique bounded $t$-structure whose heart admits $\mu_i^+(\mathcal{X})$ as its simple objects—this is well-defined thanks to the simple minded collection mutation above.

\textbf{Co-t-structures.} Finally, consider co-$t$-structures on $\T^c$. For two co-$t$-structures $c, c'$, set $c \geq c'$ if and only if their non-positive co-aisles satisfy $c_{\leq 0} \supseteq c'_{\leq 0}$. Recall that a bounded co-$t$-structure $c$ has a co-heart with a basic additive generator $M = M_1 \oplus \cdots \oplus M_r$, which is a silting object. For each $i$, define $c'_{\leq 0}$ as the additive closure of the extension closure of $\{M_j[m], M_i[m+1] \mid j \neq i, m \geq 0\}$, and set $c'_{\geq 0} = {}^{\perp}(c'_{\leq 0}[1])$ (the left orthogonal of the shifted co-aisle). The pair $\mu_i^+(c) := (c'_{\geq 0}, c'_{\leq 0})$ is a bounded co-$t$-structure, called the left mutation of $c$ at $i$.

\medskip 
We now define the maps featured in the commutative diagram above, and verify their well-definedness and core properties in sequence.

First, the map $\phi$ sends a basic silting object $M$ to the bounded co-$t$-structure 
\[\big(\langle M\rangle^{[0,+\infty)}, \langle M\rangle^{(-\infty,0]}\big).\] It is a standard result that $\phi$ is a bijection, whose inverse $\phi^{-1}$ maps any bounded co-$t$-structure on $\T^c$ to the basic additive generator of its co-heart. 

Next, we introduce the map $\phi_1$, which takes basic silting objects in $\T^c$ to bounded $t$-structures on $\T_c^b$ with length hearts. Fix a basic silting object $M \in \T^c$. In our setup, $\T^c$ embeds into $\T_c^b$, and $\add(M)$ is contravariantly finite in $\T_c^b$. By Theorem \ref{theo-main-intro}, the pair
$$
\big( \T_M^{\leq 0} \cap \T_c^b, \T_M^{\geq 0} \cap \T_c^b \big)
$$
defines a bounded $t$-structure on $\T_c^b$, whose heart is equivalent to $\modcat{H^0(M)}$, and hence further equivalent to $\modcat{\End_{\T}(M)}$. Since $\End_{\T}(M)$ is a finite-dimensional algebra, its module category is a length category, so the heart of the $t$-structure above is also a length category. This completes the verification of the well-definedness of $\phi_1$.

The map $\phi_4$ sends a co-$t$-structure $(c_{\geq 0}, c_{\leq 0})$ to a bounded $t$-structure $(t^{\leq 0}, t^{\geq 0})$ on $\T_c^b$, where 
\[t^{\leq 0}=(c_{\geq 1})^{\perp}\cap\T_c^b.\]
If $(c_{\geq 0}, c_{\leq 0})=\phi(M)$ for some silting object $M$, then it follows by definition that 
\[\T_c^b\cap\T_{M}^{\leq 0}=(\langle M\rangle^{[1,+\infty]})^{\perp}\cap\T_c^b.\]
Hence $\phi_4\phi=\phi_1$.

We now turn to the map $\phi_2$, whose domain is the set of bounded $t$-structures on $\T_c^b$ with length hearts, and whose codomain is the set of simple minded collections in $\T_c^b$. For any such $t$-structure, $\phi_2$ sends it to the set of isomorphism classes of simple objects in its heart, which is a standard fact to form a simple minded collection in $\T_c^b$.

Finally, under the assumptions of Proposition \ref{prop-SMC2silting}, we define the map $\phi_3$ from the set of simple minded collections in $\T_c^b$ to the set of isomorphism classes of basic silting objects in $\T^c$. For any simple minded collection $\mathcal{X}$, $\phi_3$ maps $\mathcal{X}$ to the isomorphism class of the basic silting object $T$, whose existence and explicit construction are given in Proposition \ref{prop-SMC2silting}.

%% mutation of a simple-minded collection corresponds to mutation of silting.

\begin{Lem}\label{lem-bijection}
Keep the notations above and assume that there is some $G'\in\T_c^b$ and $N>0$ such that $\T=\overline{\langle G'\rangle}_N$. Then the following hold. 
\begin{enumerate}
	\item $\phi_2\phi_1$ is injective;
	\item $\phi_2\phi_1\phi_3$ is the identity map on the set of simple minded collections in $\T_c^b$;
	\item  $\phi_1\phi_3\phi_2$ is the identity map on the set of bounded $t$-structures on $\T_c^b$ with length heart.
	\item $ \phi_1,\phi_2,\phi_3$ all are bijections. 
\end{enumerate}
\end{Lem}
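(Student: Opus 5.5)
The plan is to prove (1)–(3) directly and deduce (4) formally. For (4): (2) gives $\phi_2\phi_1\phi_3=\mathrm{id}$, so $\phi_3$ is injective and $\phi_2\phi_1$ is surjective; with (1), $\phi_2\phi_1$ is bijective, hence $\phi_3=(\phi_2\phi_1)^{-1}$ is bijective. Then $\phi_1$ is injective and $\phi_2$ surjective. From (3), $\phi_2$ is injective and $\phi_1$ surjective. So all three are bijections.

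\textbf{Step (1).} Let $M$ be a basic silting object with indecomposable summands $M_1,\dots,M_r$. By Theorem \ref{theo-main-intro} and Proposition \ref{prop-res-heart}, the heart of $\phi_1(M)$ is equivalent to $\modcat{\End(M)}$ via $X\mapsto \Hom(M,X)$. Its projectives are the objects $H^0(M_i)$. Hence its simples $S_1,\dots,S_r$ are the tops of the $H^0(M_i)$, and they are characterized by
\[
\Hom(M_j,S_i[m])\simeq \delta_{ij}\,\delta_{m0}\,\End(S_i).
\]
For $m\neq 0$ this vanishing holds because $S_i$ lies in the heart and $M_j\in\T_M^{\leq 0}$ is compact with $\Hom(M_j,-)$ exact on the heart. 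Suppose $\phi_2\phi_1(M)=\phi_2\phi_1(N)$. Then the two bounded $t$-structures have the same simples. Since each heart is a length category, each heart is the extension closure of its simples. A bounded $t$-structure is determined by its heart, because $t^{\leq 0}$ is the extension closure of $\mathcal{H}[n]$ for $n\geq 0$. Thus $\phi_1(M)=\phi_1(N)$, so $\T_M^{\leq 0}\cap\T_c^b=\T_N^{\leq 0}\cap\T_c^b$.

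It remains to recover $M$. I plan to show that $\add(M)$ is exactly the class of compact $P\in\T_c^b\cap\T_M^{\leq 0}$ with $\Hom(P,Y[1])=0$ for all $Y\in\T_c^b\cap\T_M^{\leq 0}$. The key point for the converse inclusion is this: for such a $P$, the co-$t$-structure triangle $C\to P\to Y\to C[1]$ of Proposition \ref{prop-res-co-t} has $Y[-1]$ in $\T_M^{\leq 0}\cap\T_c^b$. Here I use $\T^c\subseteq\T_c^b$, which holds because $\Hom(M,M[n])$ is finite dimensional. Hence $P\to Y$ vanishes and $P$ is a summand of $C$. By the same Hom-vanishing argument, $P$ is then a summand of an object of $\add(M)$. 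This characterization is symmetric in $M$ and $N$, so $\add(M)=\add(N)$, and since both are basic, $M\simeq N$.

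\textbf{Step (2).} Given a simple-minded collection $\mathcal{X}$, let $T=\phi_3(\mathcal{X})$ from Proposition \ref{prop-SMC2silting}, so $\Hom(T_j,X_i[m])\simeq\delta_{ij}\delta_{m0}R_i$. Then every $X_i$ lies in $\T_T^{\leq 0}\cap\T_T^{\geq 0}$, i.e. in the heart of $\phi_1(T)$. Under the equivalence with $\modcat{\End(T)}$, $X_i$ corresponds to a module $F$ with $\Hom(T_j,X_i)=\delta_{ij}R_i$. Such an $F$ is one-dimensional over the division ring and supported at a single vertex, so it is simple. Hence the $X_i$ are $r$ pairwise non-isomorphic simples of the heart. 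The heart has exactly $r$ simples, one top for each indecomposable summand $T_j$. Therefore $\phi_2\phi_1\phi_3(\mathcal{X})=\mathcal{X}$.

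\textbf{Step (3).} Let $t$ be a bounded $t$-structure with length heart and simples $\mathcal{X}=\phi_2(t)$. By (2), $\phi_1\phi_3(\mathcal{X})$ is a bounded $t$-structure with the same simples, hence with the same heart, since a length heart is the extension closure of its simples. As in (1), a bounded $t$-structure is determined by its heart, so $\phi_1\phi_3\phi_2(t)=t$.

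The main obstacle is the recovery of $M$ from $\phi_1(M)$ in step (1). If the intrinsic Hom-orthogonality description above proves delicate, the fallback is to use Proposition \ref{prop-SMC2silting}'s characterization. Namely, identify $M_j$ up to isomorphism as the compact object representing $\Hom(-,I_j)^{*}$, using the duality $D\Hom(T_i,-)\simeq\Hom(-,I_i)$ established in its proof together with Yoneda uniqueness in Lemma \ref{lem-representability}(2). Both $M$ and $\phi_3\phi_2\phi_1(M)$ satisfy the same Hom-conditions against the simples, hence represent the same functor on $\T_c^b$, and therefore coincide. This would also directly give $\phi_3\phi_2\phi_1=\mathrm{id}$, which yields (1).
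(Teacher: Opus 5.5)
Your proposal is correct. For (2), (3) and the formal deduction of (4) it is essentially the paper's argument. The paper likewise identifies $\Hom(T,X_i)$ as the simple top of $\Hom(T,T_i)$ under $\mathcal{H}_T\simeq\modcat{\End(T)}$. It also uses that a bounded $t$-structure with length heart is determined by its simples.

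The real difference is the final step of (1). Once $\T_c^b\cap\T_M^{\leq 0}=\T_c^b\cap\T_N^{\leq 0}$ is known, the paper notes that $N\in\T_c^b\cap\T_N^{\leq 0}\subseteq\T_M^{\leq 0}$. Hence $\Hom(M,N[i])=0$ for $i>0$, so $M\geq N$. By symmetry $N\geq M$, and antisymmetry of the silting order (Aihara--Iyama) gives $M\simeq N$.

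You instead recover $\add(M)$ intrinsically as the compact Ext-projective objects of the aisle $\T_c^b\cap\T_M^{\leq 0}$. You do this via the co-$t$-structure triangle and the fact that its co-heart is $\add(M)$ (Proposition~\ref{prop-res-co-t}). The cleanest way to finish is exactly that: $P$ is a summand of $C\in\langle M\rangle^{[0,\infty)}$ and lies in $\T_M^{\leq 0}$, so it lies in the co-heart. This is correct. It amounts to reproving antisymmetry of the silting order inside the present setting, so it is more self-contained, whereas the paper's version is a two-line reduction to a known theorem. Your fallback is therefore not needed. As sketched it is also incomplete: matching Hom-spaces against the simples does not by itself give a natural isomorphism of functors on $\T_c^b$.

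One correction: the reason you give for $\T^c\subseteq\T_c^b$ is not valid. Membership $M\in\T_c^b$ requires $\Hom(M,M[n])=0$ for $n\ll 0$, and finite dimensionality of each $\Hom(M,M[n])$ does not give that. For example, take the formal dg algebra $k[x]$ with $x$ in degree $-2$: each graded piece is finite dimensional, but it is nonzero in every even non-positive degree. The paper relies on the same inclusion. It is asserted as part of ``our setup'' when $\phi_1$ is defined, and the paper's own proof of (1) needs $N\in\T_c^b$. So you may invoke it on the same footing, just not for the reason you stated.
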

 
\begin{proof}
(1) By definition, $\phi_2\phi_1$ sends a silting object $M$ to the collection $\mathcal{X}$ of simple objects in the heart $\mathcal{H}_M=\T_c^b\cap\T_M^{\leq 0}\cap\T_M^{\geq 0}$ 
of the $t$-structure $(\T_c^b\cap\T_M^{\leq 0},\T_c^b\cap\T_M^{\geq 0})$ on $\T_c^b$.  Now suppose $N$ is another basic silting object in $\T^c$ such that $\mathcal{X}$ also is the collection of simple objects in $\mathcal{H}_N=\T_c^b\cap\T_N^{\leq 0}\cap\T_N^{\geq 0}$. Since the heart is generated by the simple objects by iterated triangle extension, we deduce that $\mathcal{H}_M=\mathcal{H}_N$. It follows that the two bounded $t$-structures coincide. Particularly $\T_c^b\cap\T_M^{\leq 0}=\T_c^b\cap\T_N^{\leq 0}$. Hence $\Hom(M,N[i])=0$ for all $i>0$. This means $M\geq N$ in the partially ordered set consisting of basic silting objects in $\T^c$. By symmetry, we also have $N\geq M$, and thus $M\simeq N$. 

(2) Starting from a simple minded collection $\mathcal{X}=\{X_1,X_2,\cdots,X_r\}$ in $\T_c^b$, Proposition \ref{prop-SMC2silting} constructs a basic silting object $T=T_1\oplus T_2\oplus\cdots\oplus T_r$ in $\T^c$ such that 
\[\Hom(T_j,X_i[m])\simeq \begin{cases}
	{}_{R_i}R_i, & i=j, m=0,\\
	0, & \mbox{else},
\end{cases}\quad\quad (\dagger)\]
where $R_i$ is the endomorphism algebra of $X_i$, $i=1,2,\cdots,r$. 
The image $\phi_2\phi_1\phi_3(\mathcal{X})$ is the collection of simple objects in the heart $\mathcal{H}_T$ of the bounded $t$-structure $(\T_c^b\cap\T_T^{\leq 0},\T_c^b\cap\T_T^{\geq 0})$ on $\T_c^b$. By Theorem \ref{theo-main-intro}, there is an equivalence 
\[\mathcal{H}_T\lra \modcat{\add(T)},\quad  X\mapsto\Hom(-,X).\]
Equivalently, $\mathcal{H}_T\lra \modcat{\End(T)}, X\mapsto \Hom(T,X)$ is an equivalence. The above $(\dagger)$ shows that $\Hom(T,X_i)$ is the simple $\End(T)$-module correponding to the indecomposable projective module $\Hom(T,T_i)$. The equivalence then tells us that $X_1,X_2,\cdots,X_r$ are precisely those simple objects in $\mathcal{H}_T$. This proves that $\phi_2\phi_1\phi_3(\mathcal{X})=\mathcal{X}$. 

(3) For each bounded $t$-structure ${\sf t}=(t^{\leq 0},t^{\geq 0})$ with length heart $\mathcal{H}$. The image of this $t$-structure under $\phi_2$ is the collection $\mathcal{X}$ of all simple objects in $\mathcal{H}$. Let $T$ be as constructed in Proposition \ref{prop-SMC2silting}. Then the image $\phi_1\phi_2(\mathcal{X})$ is the $t$-structure $(\T_c^b\cap\T_T^{\leq 0},\T_c^b\cap\T_T^{\geq 0})$ on $\T_c^b$. By (2), the collection of simple objects in $\mathcal{H}_T$, that is, $\phi_3\phi_1\phi_2(\mathcal{X})$, coincides with $\mathcal{X}$. It follows that $\mathcal{H}=\mathcal{H}_T$. Hence $(t^{\leq 0}, t^{\geq 0})$ is precisely the $t$-structure $(\T_c^b\cap\T_T^{\leq 0},\T_c^b\cap\T_T^{\geq 0})$. This shows that $\phi_1\phi_3\phi_2({\sf t})={\sf t}$.  

(4) The bijectivity follows from (1), (2) and (3) easily. Let $M=M_1\oplus M_2\oplus\cdots\oplus M_r$ be a basic silting object in $\T^c$ and let $\mathcal{X}=\{X_1,X_2,\cdots, X_r\}=\phi_3^{-1}(M)$ be the correponding simple minded collection. By definition, $\mathcal{X}$ consists precisely those simple objects in the heart $\mathcal{H}_M$ of the $t$-structure on $\T_c^b$ induced by $M$. Moreover, $\mathcal{H}_M$ is equivalent to $\modcat{\End(M)}$ via the functor sending $X$ to $\Hom(M,X)$. Identifying $\mathcal{H}_M$ with $\modcat{\End(M)}$, the proof of \cite[Lemma 7.8]{Koenig2014aa} still works. This implies that $\mu_i^+(\mathcal{X})$ is again a simple minded collection for each $i=1,2,\cdots,r$. 
\end{proof}

To prove that the bijections commute with mutations and preserve partial orders, we need the following lemma. 
\begin{Lem}\label{lem-fd-iso}
Let $\Lambda$ be a finite dimensional basic algebra such that  $\Lambda_{\Lambda}=P_1\oplus P_2\oplus\cdots\oplus P_n$ is a direct sum of indecomposable projecitve modues.  For each $i=1,2,\cdots,r$, let $P_i\lraf{f} E$ be a minimal left $\add(\bigoplus_{k\neq i}P_k)$-approximation of $P_i$. Suppose $j\neq i$ and $\Delta_j$ is the maximal quotient of $P_j$ such that the radical of $\Delta_j$ only has the top of $P_i$ as composition factors. Then the induced morphism $\Hom_{\Lambda}(E,\Delta_j)\lraf{(f,-)}\Hom_{\Lambda}(P_i,\Delta_j)$ is an isomorphism. 
\end{Lem}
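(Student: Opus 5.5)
Write $P'=\bigoplus_{k\neq i}P_k$, $\mathcal{P}'=\add(P')$, and let $S_i$ be the top of $P_i$. The statement is about finite dimensional modules only, so I will argue directly in $\modcat{\Lambda}$ with the exact functor $\Hom_\Lambda(-,\Delta_j)$ applied to a suitable exact sequence built from $f$. First I record the structure of $\Delta_j$: it is a quotient of $P_j$ with simple top $S_j$ ($j\neq i$), and every composition factor of $\rad\Delta_j$ is isomorphic to $S_i$. In particular $\Hom_\Lambda(P_k,\rad\Delta_j)=0$ for all $k\neq i$, since $\Hom(P_k,M)$ measures the multiplicity of $S_k$ in $M$. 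Also $\Hom(P_k,\Delta_j)\cong e_k\Delta_j$ is zero for $k\neq i,j$ and is one dimensional over the division ring for $k=j$.

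Surjectivity of $(f,-)$ is immediate from the approximation property, once we know that every map $P_i\to\Delta_j$ factors through $\mathcal{P}'$. Take $g:P_i\to \Delta_j$. Its image lies in $\rad\Delta_j$, because $\Delta_j$ has top $S_j\neq S_i$. Choose $h:P_j\to\Delta_j$, the canonical surjection. The obstacle is that not every map into $\rad\Delta_j$ obviously lifts through $P_j$; I would handle this using the maximality of $\Delta_j$. Write $\Delta_j=P_j/U$, where $U$ is the smallest submodule such that $\rad(P_j)/U$ has only $S_i$ as composition factors. Equivalently, $U$ is the trace-type submodule $U=\sum_{\phi}\Img\phi$ over maps $\phi:P_k\to\rad P_j$, $k\neq i$, iterated; concretely, $\rad P_j/U$ is the largest quotient of $\rad P_j$ annihilated by $e_k$ for $k\ne i$. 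Since $P_i$ is projective, $g$ lifts to $\tilde g:P_i\to \rad P_j\subseteq P_j$, so $g=\pi\tilde g$ with $\pi:P_j\to\Delta_j$, and this gives a factorization through $P_j\in\mathcal{P}'$. Hence $g=g'f$ for some $g'$, proving surjectivity.

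For injectivity, suppose $u:E\to\Delta_j$ satisfies $uf=0$. Write $E\cong\bigoplus_{k\neq i}P_k^{a_k}$. By minimality of $f$ (left minimal), $\Img f\subseteq\rad E$; otherwise a summand $P_k$ would split off from $P_i$, which is impossible since $P_i\not\cong P_k$ and $P_i$ is indecomposable. Consider $\cok f=E/\Img f$; then $u$ factors through $\cok f$. The key claim, and the step I expect to be the main obstacle, is that $\Hom(\cok f,\Delta_j)=0$. Any map $v:\cok f\to\Delta_j$ whose image is not inside $\rad\Delta_j$ would be surjective onto $\Delta_j$. It would then give a surjection $E\to\Delta_j$ killing $\Img f$. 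I would exploit the defining property of the approximation: every map $P_i\to P_j$ factors through $f$. Lifting $\Delta_j$'s structure, this surjection composed with $f$ vanishes, and I would derive a contradiction with maximality of $\Delta_j$ as follows. The image of $\Hom(P_i,P_j)$ in $\Delta_j$ is exactly the $S_i$-part of $\rad \Delta_j$, and it is nonzero whenever $\rad\Delta_j\neq0$. So killing $f$ kills it, which forces the restriction of $v$ to be degenerate. If instead $\Img v\subseteq\rad\Delta_j$, then $v$ restricted to the tops of the $P_k$ in $E$ vanishes because $e_k\rad\Delta_j=0$ for $k\neq i$. Since $E$ is generated by those tops, $v=0$. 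So only the first case needs care, and I would resolve it by the multiplicity count $\dim\Hom(E,\Delta_j)=\dim\Hom(P_i,\Delta_j)$, computed via the approximation sequence. Once $\Hom(\cok f,\Delta_j)=0$ holds, we get $u=0$, and the map is an isomorphism.
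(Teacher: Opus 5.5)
\textbf{There is a genuine gap: Case~1 of your injectivity argument is not proved, and your approach cannot prove it because it never uses minimality of $f$.}

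Your surjectivity argument is correct and is the paper's: lift $g$ through $\pi\colon P_j\to\Delta_j$, then use $P_j\in\add(P')$. Your Case~2 of injectivity is also fine: a map $E\to\Delta_j$ with image in $\rad\Delta_j$ is zero, since $\Hom(P_k,\rad\Delta_j)=0$ for $k\neq i$.

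Case~1, a surjection $u\colon E\to\Delta_j$ with $uf=0$, is the whole content of the lemma.
\begin{itemize}
\item The sentence ``killing $f$ kills it'' yields no contradiction. The identity $uf=0$ only says $u$ vanishes on $\Img f$. It says nothing about the maps $\pi\theta f$ you have in mind, because $\pi\theta$ and $u$ are unrelated maps $E\to\Delta_j$.
\item The proposed count $\dim\Hom(E,\Delta_j)=\dim\Hom(P_i,\Delta_j)$ is circular. Once surjectivity is known, it is just a restatement of the claim.
\item Nothing you wrote uses minimality of $f$ essentially. The inclusion $\Img f\subseteq\rad E$ holds for every map $P_i\to E$ with $E\in\add(P')$, since the top of $E$ contains no $S_i$.
\item The lemma is false without minimality. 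The map $[f,0]^T\colon P_i\to E\oplus P_j$ is again a left $\add(P')$-approximation, with image in the radical. Yet $[0,\pi]\colon E\oplus P_j\to\Delta_j$ is a surjective map killed by it. So any treatment of Case~1 that does not exploit left minimality must fail.
\end{itemize}

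The missing idea, which is what the paper does, runs as follows.
\begin{itemize}
\item Write $E=P_j^m\oplus Q$ with $Q$ having no summand $P_j$. Then $\Hom(Q,\Delta_j)=0$, and every $u\colon E\to\Delta_j$ has the form $[r_1\pi,\dots,r_m\pi,0]$ with $r_k\in R_j=\End(\Delta_j)$, a division ring.
\item So injectivity amounts to $R_j$-linear independence of $\pi f_1,\dots,\pi f_m$.
\item Suppose $\pi f_1=\sum_{k\geq 2}r'_k\pi f_k$. Lift each $r'_k$ to $h_k\in\End(P_j)$. Then $f_1-\sum_{k\geq 2}h_kf_k$ lands in $\Ker\pi$.
\item $\Ker\pi$ is the image of a right $\add(P')$-approximation $\rho\colon E'\to\rad P_j$. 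This is your trace description of $U$, and it is exactly where maximality of $\Delta_j$ enters.
\item Hence $f_1-\sum_{k\geq 2}h_kf_k=\rho\theta$, and $\theta=wf$ for some $w=[w_1,\dots,w_m,w']\colon E\to E'$ by the approximation property of $f$.
\item Since $\rho w_1\in\rad\End(P_j)$, the endomorphism $1-\rho w_1$ is invertible. Therefore $f_1=(1-\rho w_1)^{-1}\bigl(\sum_{k\geq 2}(h_k+\rho w_k)f_k+\rho w'f'\bigr)$.
\item Thus $f_1$ factors through $[f_2,\dots,f_m,f']^T$. This produces a non-invertible endomorphism $\phi$ of $E$ with $\phi f=f$, contradicting left minimality.
\end{itemize}
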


\begin{proof}
We denote by $S_k$ the simple top of the indecomposable projective modue $P_k$ for each $k$. The module $\Delta_j$ can be interpreted as follows. Take a right $\add(\bigoplus_{k\neq i}P_k)$-approximation of $\rad P_j$, say $\rho: E'\lra \rad P_j$. One can easily check that $\Delta_j$ is just the cokernel of the composition of $\rho$ and the inclusion $\rad P_j\hookrightarrow P_j$, still denoted by $\rho$. Thus we have an exact sequence 
\[E'\lraf{\rho} P_j\lraf{\pi}\Delta_j\lra 0,\] 
where $\pi$ is the canonical epimorphism.

We first prove the 
 surjectivity of $(f,-)$.   For each morphism $g: P_i\lra \Delta_j$, we can form the following diagram. 
\[\xymatrix@!=10mm{
P_i\ar[r]^{f}\ar[rd]_(.3){g}\ar@{-->}[d]_{h} & E\ar@{-->}[ld]^(.3){\theta}\\
P_j\ar[r]_{\pi} & \Delta_j
}\]
The morphism $h$ is induced by $g$ such that $\pi h=g$. Since $f$ is a left $\add(\bigoplus_{k\neq i}P_i)$-approximation, the morphism $h$ factorizes through $f$, that is, there is a morphism $\theta: E\lra P_j$ such that $\theta f=h$. It follows that $g=\pi h=\pi\theta f=(f,-)(\pi\theta)$. Hence $(f,-)$ is surjective.  

Next we consider the injectivity. For this purpose, we rewrite $E$ as $E=P_j^m\oplus Q$, where $P_j^m$ is a direct sum of $m$ copies of $P_j$ and $Q$ has no direct summands isomorphic to $P_j$. The morphism $f$ then is rewritten as $f=[f_1,\cdots,f_m,f']^T: P_i\lra P_j^m\oplus Q$. Now let $g=[g_1,\cdots,g_m,g']: P_j^m\oplus Q\lra \Delta_j$ be a morphism such that $gf=0$. We need to prove that $g=0$. Now $g': Q\lra \Delta_j$ lifts to a morphism $g'':Q\lra P_j$ which further factorizes through $\rho: E'\lra P_j$. This implies that $g'=\pi g''=\pi \rho \gamma=0$ for some $\gamma: Q\lra E'$. If $m=0$, then clearly $g=0$. Now assume that $m>0$. Then $gf=g_1f_1+g_2f_2+\cdots+g_mf_m=0$.  By definition the multiplicity of $S_j$ as a composition factor of  $\Delta_j$ is $1$. Also $S_j$ is the top of $\Delta_j$. It follows that $R_j:=\End_{\Lambda}(\Delta_j)\simeq \End_{\Lambda}(S_j)$ is a division algebra and $\Hom_{\Lambda}(P_j,\Delta_j)$ is $1$-dimensonal over $R_j$. Hence there are $r_1,r_2,\cdots,r_m\in\End_{\Lambda}(\Delta_j)$ such that $g_k=r_k\pi$ for all $k=1,2,\cdots,m$. Thus $r_1\pi f_1+r_2\pi f_2+\cdots+r_m\pi f_m=0$. It remains to show that $\pi f_1,\pi f_2,\cdots, \pi f_m$ are $R_j$-linearly independent. If not, then some $\pi f_k$ can be written as a $R_j$-linear combination of the remaining morphisms. Without loss of generality, we assume that 
\[\pi f_1= r'_1\pi f_2+r'_3\pi f_2+\cdots+r'_{m}\pi f_{m}\]
for some  $r_2',r'_3,\cdots,r'_{m}$ in $R_j$. Each $r_k'$ lifts to an endomorphism $h_k\in \End_{\Lambda}(P_j)$ such that $\pi h_k=r'_k \pi$. Thus 
\[\pi (f_1-\sum_{k=2}^{m}h_kf_k)=\pi f_1-\sum_{k=2}^m \pi h_kf_k=\pi f_1-\sum_{k=2}^m r'_k \pi f_k=0.\]
It follows that $f_1-\sum_{k=2}^{m}h_kf_k$ factorizes through $\Ker\pi=\Img\rho$. There is a morphism $\theta: P_i\lra E'$ such that  $f_1-\sum_{k=2}^{m}h_kf_k=\rho\theta$. Since $f$ is left $\add(\bigoplus_{k\neq i}P_i)$-approximation, $\theta=u f$ for some $u: E\lra E'$. Now rewrite $u$ as $u=[u_1,u_2,\cdots,u_m,u']: P_j^m\oplus Q\lra E'$. Then 
\[f_1-\sum_{k=2}^{m}h_kf_k=\rho\theta=\rho u f=\rho u_1f_1+\rho u_2f_2+\cdots \rho u_mf_m+\rho u'f'.\]
Note that the morphism $\rho u_1$ lies in the radical of $\End_{\Lambda}(P_j)$ since $\Img \rho\subseteq \rad(P_j)$. It follows that $1-\rho u_1$ is invertible in $\End_{\Lambda}(P_j)$. Hence 
\[f_1=(1-\rho u_1)^{-1}\left(\rho u'f'+\sum_{k=2}^m (h_k+\rho u_k)f_k\right)\]
fanctorizes through the morphism $[f_2,f_3,\cdots, f_m, f']^T: P_i\lra P_j^{m-1}\oplus Q$. This contradicts to the assumption that $f$ is a minimal left $\add(\bigoplus_{k\neq i}P_i)$-approximation. 
\end{proof}

Now we are ready to give a proof of Theorem \ref{theo-correspondence}

\begin{proof}[Proof of Theorem \ref{theo-correspondence}]
	
The bijections follow from Lemma \ref{lem-bijection}. It remains to show that these bijections commute with mutations and preserve partial orders. 

That $\phi$ commutes with mutations and preserve partial orders was proved already in \cite[Proposition 7.11]{Koenig2014aa} and \cite[Proposition 2.14]{Aihara2012aa}. 

For each basic silting object $M$ in $\T^c$, suppose that $M=M_1\oplus M_2\oplus \cdots \oplus M_r$ is a direct sum of indecomposable objects. By definition, the left mutation $\mu_i^+(M)$ is $M^*=M_i^*\oplus\bigoplus_{k\neq i}M_k$, where $M_i^*$ is the cone of a minimal left $\add(\bigoplus_{k\neq i}M_k)$-approximation $f: M_i\lra E$ of $M_i$. The bijection $\phi_3^{-1}$ takes $M$ to the collection $\mathcal{X}=\{X_1,X_2,\cdots, X_r\}$ of simple objects in the heart $\mathcal{H}_M$ of the $t$-structure on $\T_c^b$ induced by $M$. We shall show that $\mathcal{X}^*=\mu_i^+(\mathcal{X})$ can be defined and $\phi_3^{-1}(M^*)=\mathcal{X}^*$.  By Theorem \ref{theo-main-intro}, the heart $\mathcal{H}_M$ is equivalent to module category of the finite dimensional algebra $\Lambda=\End(M)$ by sending $X$ to $\Hom(M,X)$. For each $j\neq i$, one can construct a diagram of  $\Lambda$-modules.
\[\xymatrix{
\Hom(M,E')\ar[r]^{\rho} & \Hom(M,M_j) \ar[r]^{\pi}\ar@{->>}[rd] & \Hom(M,X_j)\ar[r] & 0\\
&&\Hom(M,X_j^*)\ar[u]_{\pi'}\\
&&\Hom(M,X_{ij})\ar[u]
}\]
The morphism $\pi$ is the canonical map from the projective $\Lambda$-module $\Hom(M,M_j)$ to its top $\Hom(M,X_j)$, and $\rho$ is a right $\add(\Hom(M,\bigoplus_{k\neq i}M_k))$-approximation of $\Ker\pi=\rad \Hom(M,M_j)$. The cokernel of $\rho$ is $\Hom(M,X_j^*)$ for some $X_j^*\in\mathcal{H}_M$ and the kernel of the induced morphism $\pi'$ is $\Hom(M,X_{ij})$ for some $X_{ij}\in\mathcal{H}_M$. By construction $\Hom(M,X_{ij})=\rad\Hom(M,X_j^*)$ has no composition factors of the form $\Hom(M,X_k), k\neq i$. Accordingly $X_{ij}$ belongs to the extension closure $\mathcal{X}_i$ of $X_i$. Since $E'\in\add(\bigoplus_{k\neq i}M_k)$, we deduce that $\Ext_{\Lambda}^1(\Hom(M,X_j^*),\Hom(M,X_i))=0$. It follows that, for each $X\in\mathcal{X}_i$,  
\[\Hom(X_j^*[-1], X)\simeq \Hom(X_j^*, X[1])\simeq \Ext_{\mathcal{H}_M}^1(X_j^*,X)\simeq \Ext_{\Lambda}^1(\Hom(M,X_j^*),\Hom(M,X))=0.\]
Thus, in the triangle $\xi_j: X_j[-1]\lraf{g} X_{ij}\lra X_j^*\lra X_j$, the morphism $g$ is a left $\mathcal{X}_i$-approximation. Since $\Hom(M,X_j^*)$ and whence $X_j^*$ is indecomposable, $g$ must be a minimal left $\mathcal{X}_i$-approximation. This shows that $\mu_i^+(\mathcal{X})$ is defined and consists of $X_i[1]$ and those $X_j^*,j\neq i$. We shall show that these objects are precisely the simple objects in the heart $\mathcal{H}_{M^*}$. It follows from  $X_{ij}\in\mathcal{X}_i$ that $\Hom(M_k[m],X_{ij})=0$ for all $k\neq i$ and all integers $m$. Applying $\Hom(M_k[m],-), k\neq i$ to the triangle $\xi_j$ results in an isomorphism 
\[\Hom(M_k[m], X_j^*)\simeq \begin{cases}
	\End(X_j), & m=0, k=j;\\
	0,& \text{else.}  
\end{cases} \]
For each $X\in\mathcal{H}_M$, one has $\Hom(M[m],X)=0$ for all $m\neq 0$. Applying $\Hom(-,X)$ to the triangle $M_i\lraf{f} E\lra M_i^*\lra M_i[1]$, for each integer $m$, there is an exact sequence 
\[\Hom(E[m+1],X)\lra \Hom(M_i[m+1],X)\lra \Hom(M_i^*[m], X)\lra \Hom(E[m],X),\]
which implies that $\Hom(M_i^*[m],X)=0$ for all $m\neq 0, -1$. Hence $\Hom(M_i^*, X_j^*[m])=0$ for all $m\neq 0,1$. For the special case $X\in\mathcal{X}_i$, the Hom-space $\Hom(E[m],X)$ vanishes for all $m$, and consequently $\Hom(M_i^*,X[m])=0$ for all $m\neq 1$. For $X=X_i$, the above sequence gives 
\[\Hom(M_i^*, X_i[1])\simeq \Hom(M_i[1],X_i[1])\simeq \End(X_i).\] 
Taking $X=X_j^*$ in the above exact sequence, we get another exact sequence 
\[0\lra \Hom(M_i^*, X_j^*)\lra \Hom(E, X_j^*)\lraf{(f,-)} \Hom(M_i, X_j^*)\lra \Hom(M_i^*, X_j^*[1])\lra 0.\]
By construction, the $\Lambda$-module $\Hom(M, X_j^*)$ is $\Delta_j$ in Lemma \ref{lem-fd-iso}. The morphism 
\[\Hom(M, M_i)\lraf{(M,f)}\Hom(M,E)\]  
is a minimal left $\add\Hom(M,\bigoplus_{k\neq i}M_k)$-approximation. It follows from Lemma \ref{lem-fd-iso} that 
\[\Hom_{\Lambda}(\Hom(M,E),\Hom(M,X_j^*))\lra \Hom_{\Lambda}(\Hom(M,M_i),\Hom(M,X_j^*))\]
is an isomorphism, and consequently $(f,-)$ in the above sequence is an isomorphism. Hence both $\Hom(M_i^*,X_j^*)$ and $\Hom(M_i^*,X_j^*[1])$ vanish. Altogether, we have $\Hom(M_i^*,X_j^*[m])=0$ for all integers $m$. Finally, clearly one has $\Hom(M_j,X_i[m])=0$. Thus, we have proved that $\mu_i^+(\mathcal{X})=\{X_1[1], X_j^*,j\neq i\}$ is in the heart $\mathcal{H}_{M^*}$, and $\Hom(M^*, -)$ sends $X_i[1], X_j^*, j\neq i$ to simple $\End(M^*)$-modules. Since $\Hom(M^*, -)$ is an equivalence between $\mathcal{H}_{M^*}$ and $\modcat{\End(M^*)}$, the simple objects in $\mathcal{H}_{M^*}$ are $X_i[1], X_j^*, j\neq i$. Hence  $\phi_3^{-1}(M^*)=\mu_i^+(\mathcal{X})$. This shows that left mutation of simple minded collections in $\T_c^b$ can always be defined and $\phi_3^{-1}$ commutes with left mutations. The left mutation of a bounded  $t$-structures in $\T_c^b$ is defined via its collection of simple objects. It follows immediately that $\phi_1$ and $\phi_2$ commute with left mutations. Dually, one can prove that $\phi_1,\phi_2$ and $\phi_3$ commute with right mutations.

That the bijections $\phi_2$ and $\phi_4$ preserve partial orders was proved in \cite[Proposition 7.9, Theorem 7.13]{Koenig2014aa}. 
The desired result for  $\phi$  follows from \cite[Proposition 2.14]{Aihara2012aa}.  Since $\phi_1=\phi_4\phi$ and $\phi_3=(\phi_2\phi_1)^{-1}$, all the bijections preserve partial orders. 
\end{proof}

\section*{Acknowledgements}
The authors are partially supported by  Beijing Natural Science Foundation (1252011).

\bigskip 
\noindent 
Wei Hu, School of Mathematical Sciences, Beijing Normal University, Beijing 100875, China

\noindent
\medskip 
Email: {\tt huwei@bnu.edu.cn}

\noindent
\medskip 
Ziheng Liu, School of Mathematical Sciences, Beijing Normal University, Beijing 100875, China

\noindent
\medskip 
Email: {\tt  alglzh@mail.bnu.edu.cn}


\begin{thebibliography}{10}

\bibitem{Aihara2012aa}
T.~Aihara and O.~Iyama.
\newblock Silting mutation in triangulated categories.
\newblock {\em J. Lond. Math. Soc. (2)}, 85(3):633--668, 2012.

\bibitem{Tarrio2003}
L.~Alonso~Tarr\'io, A.~Jerem\'ias~L\'opez, and M.~J. Souto~Salorio.
\newblock Construction of {$t$}-structures and equivalences of derived categories.
\newblock {\em Trans. Amer. Math. Soc.}, 355(6):2523--2543, 2003.

\bibitem{Beilinson1982aa}
A.~A. Beilinson, J.~Bernstein, and P.~Deligne.
\newblock Faisceaux pervers.
\newblock In {\em Analysis and topology on singular spaces, {I} ({L}uminy, 1981)}, volume 100 of {\em Ast\'{e}risque}, pages 5--171. Soc. Math. France, Paris, 1982.

\bibitem{Beligiannis2007aa}
A.~Beligiannis and I.~Reiten.
\newblock Homological and homotopical aspects of torsion theories.
\newblock {\em Mem. Amer. Math. Soc.}, 883, 07 2007.

\bibitem{Biswas2024}
R.~Biswas, H.~Chen, K.~M. Rahul, C.~J. Parker, and J.~Zheng.
\newblock Bounded $t$-structures, finitistic dimensions, and singularity categories of triangulated categories, arXiv:2401.00130.

\bibitem{Bondarko2007aa}
M.~V. Bondarko.
\newblock Weight structures vs. $t$-structures; weight filtrations, spectral sequences, and complexes (for motives and in general).
\newblock {\em J. K-theory}, 6:387--504, 2007.

\bibitem{Fushimi2025}
R.~Fushimi.
\newblock The correspondence between silting objects and {$t$}-structures for non-positive dg algebras.
\newblock In {\em Proceedings of the 56th {S}ymposium on {R}ing {T}heory and {R}epresentation {T}heory}, pages 22--25. Symp. Ring Theory Represent. Theory Organ. Comm., Koganei, 2025.

\bibitem{Goodbody2025}
I.~Goodbody, T.~Raedschelders, and G.~Stevenson.
\newblock Approximable triangulated categories and reflexive dg-categories, {\em Appl. Cat. Str.} (2026) 34-19. 

\bibitem{Hoshino2002aa}
M.~Hoshino, Y.~Kato, and J.-I. Miyachi.
\newblock On $t$-structures and torsion theories induced by compact objects.
\newblock {\em J. Pure Appl. Algebra}, 167(1):15--35, 2002.

\bibitem{Keller2013}
B.~Keller and P.~Nicol\'as.
\newblock Weight structures and simple dg modules for positive dg algebras.
\newblock {\em Int. Math. Res. Not. IMRN}, (5):1028--1078, 2013.

\bibitem{Koenig2014aa}
S.~Koenig and D.~Yang.
\newblock Silting objects, simple-minded collections, {$t$}-structures and co-{$t$}-structures for finite-dimensional algebras.
\newblock {\em Doc. Math.}, 19:403--438, 2014.

\bibitem{Krause2020aa}
H.~Krause.
\newblock Completing perfect complexes.
\newblock {\em Math. Zeit.}, 296:1387--1427, 2020.

\bibitem{Krause2024}
H.~Krause.
\newblock Completions of triangulated categories.
\newblock In {\em Triangulated categories in representation theory and beyond---the {A}bel {S}ymposium 2022}, volume~17 of {\em Abel Symp.}, pages 169--193. Springer, Cham, [2024] \copyright 2024.

\bibitem{Marks2023}
F.~Marks and A.~Zvonareva.
\newblock Lifting and restricting t-structures.
\newblock {\em Bull. Lond. Math. Soc.}, 55(2):640--657, 2023.

\bibitem{Hernandez2013}
O.~Mendoza~Hern\'andez, E.~C. S\'aenz~Valadez, V.~Santiago~Vargas, and M.~J. Souto~Salorio.
\newblock Auslander-{B}uchweitz context and co-{$t$}-structures.
\newblock {\em Appl. Categ. Structures}, 21(5):417--440, 2013.

\bibitem{Neeman2018TheC}
A.~Neeman.
\newblock The categories $\mathcal{T}^c$ and $\mathcal{T}^b_c$ determine each other.
\newblock arXiv:1806.06471v1.

\bibitem{Neeman2021tstr}
A.~Neeman.
\newblock The {$t$}-structures generated by objects.
\newblock {\em Trans. Amer. Math. Soc.}, 374(11):8161--8175, 2021.

\bibitem{Neeman2024}
A.~Neeman.
\newblock Bounded {$t$}-structures on the category of perfect complexes.
\newblock {\em Acta Math.}, 233(2):239--284, 2024.

\bibitem{Neeman2026aa}
A.~Neeman.
\newblock Triangulated categories with a single compact generator and a brown representability theorem.
\newblock {\em Invent. Math.}, 2026.

\bibitem{Pauks2008aa}
D.~Pauksztello.
\newblock Compact corigid objects in triangulated categories and co-$t$-structures.
\newblock {\em Central Eur. J. Math.}, 6:25--42, 2008.

\bibitem{Rahul2025}
K.~M. Rahul.
\newblock Representability theorems via metric techniques, 	arXiv:2504.11768.

\bibitem{Rickard2002aa}
J.~Rickard.
\newblock Equivalences of derived categories for symmetric algebras.
\newblock {\em J. Algebra}, 257(2):460--481, 2002.

\bibitem{Saorin2022}
M.~Saor\'in and A.~Zvonareva.
\newblock Lifting of recollements and gluing of partial silting sets.
\newblock {\em Proc. Roy. Soc. Edinburgh Sect. A}, 152(1):209--257, 2022.

\bibitem{Su2019}
H.~Su and D.~Yang.
\newblock From simple-minded collections to silting objects via {K}oszul duality.
\newblock {\em Algebr. Represent. Theory}, 22(1):219--238, 2019.

\end{thebibliography}
\end{document}